\newcommand{\A}{\mathfrak{A}}
\newcommand{\B}{B_n}
\newcommand{\tX}{\tilde{X}}
\newcommand{\Fgp}{\tX^{\geq p}}
\newcommand{\Xgp}{\tX^{\geq p}}
\newcommand{\Xp}{\tX^p}
\newcommand{\delK}{\partial_K}
\newcommand{\delA}{\partial_A}
\newcommand{\Q}{\mathbb{Q}}
\newcommand{\Pri}{\mathcal{P}}
\newcommand{\N}{\mathbb{N}}
\newcommand{\IK}{I_K}
\newcommand{\IF}{I_F}
\newcommand{\IKh}{(I_K/I_K^2)}
\newcommand{\IFh}{(I_F/I_F^2)}
\newcommand{\vB}{v \negthinspace B_n}
\newcommand{\grK}{gr_I K}
\newcommand{\grPV}{gr_I \Q \PV}
\newcommand{\QG}{\Q G}
\newcommand{\Rg}{\mathfrak{R}}
\newcommand{\Kipq}{\Rg_{p,q}}
\newcommand{\Kip}{\Rg_p}
\newcommand{\PV}{P \negthinspace v \negthinspace B_n}
\newcommand{\PfB}{P \negthinspace f \negthinspace B_n}
\newcommand{\pf}{\mathfrak{pfb}_n}
\newcommand{\pfq}{\mathfrak{pfb}_n^!}
\newcommand{\PB}{P \negthinspace B_n}
\newcommand{\pb}{\mathfrak{pb}_n}
\newcommand{\pbq}{\mathfrak{pb}_n^!}
\newcommand{\pv}{\mathfrak{pvb}_n}
\newcommand{\pvq}{\mathfrak{pvb}_n^!}
\newcommand{\grpv}{grP \negthinspace v \negthinspace B_n}
\newcommand{\qpv}{\Q P \negthinspace v \negthinspace B_n}
\newcommand{\rij}{r_{ij}}
\newcommand{\rik}{r_{ik}}
\newcommand{\rjk}{r_{jk}}
\newcommand{\rji}{r_{ji}}
\newcommand{\rki}{r_{ki}}
\newcommand{\rkj}{r_{kj}}
\newcommand{\rkl}{r_{kl}}
\newcommand{\rlk}{r_{lk}}
\newcommand{\ril}{r_{il}}
\newcommand{\rli}{r_{li}}
\newcommand{\rjl}{r_{jl}}
\newcommand{\rlj}{r_{lj}}
\newcommand{\rst}{r_{st}}
\newcommand{\rijq}{r_{ij}^*}
\newcommand{\rikq}{r_{ik}^*}
\newcommand{\rjkq}{r_{jk}^*}
\newcommand{\rjiq}{r_{ji}^*}
\newcommand{\rkiq}{r_{ki}^*}
\newcommand{\rkjq}{r_{kj}^*}
\newcommand{\rklq}{r_{kl}^*}
\newcommand{\rlkq}{r_{lk}^*}
\newcommand{\rilq}{r_{il}^*}
\newcommand{\rliq}{r_{li}^*}
\newcommand{\rjlq}{r_{jl}^*}
\newcommand{\rljq}{r_{lj}^*}
\newcommand{\Yijk}{Y_{ijk}}
\newcommand{\Cijkl}{C_{ij}^{kl}}
\newcommand{\cijkl}{c_{ij}^{kl}}
\newcommand{\cikjl}{c_{ik}^{jl}}
\newcommand{\ciljk}{c_{il}^{jk}}
\newcommand{\yijk}{y_{ijk}}
\newcommand{\yijl}{y_{ijl}}
\newcommand{\yikl}{y_{ikl}}
\newcommand{\yjkl}{y_{jkl}}
\newcommand{\pvqiii}{\mathfrak{pvb}_n^{!3}}
\newcommand{\pvqii}{\mathfrak{pvb}_n^{!2}}
\newcommand{\xp}{x_p}
\newcommand{\bxp}{\bar{x}_p}
\newcommand{\bx}{\bar{x}}
\newcommand{\Rq}{R}
\newcommand{\Rqp}{R^{\perp}}
\newcommand{\xmi}{R_{m,i}}
\newcommand{\xmj}{R_{m,j}}
\newcommand{\RK}{\Rg}
\newcommand{\RKp}{\Rg_p}
\newcommand{\RKgp}{\Rg_{\geq p}}
\newcommand{\RF}{\Rg^F}
\newcommand{\YF}{\mathcal{Y}}
\newcommand{\w}{\wedge}
\newcommand{\sw}{\overset{*}{\wedge}}
\newcommand{\delSyz}{\partial_{Syz}}
\newcommand{\Diiti}{\Delta^{!}_{2,1}}
\newcommand{\Ditii}{\Delta^{!}_{1,2}}
\newcommand{\Diti}{\Delta^{!}_{1,1}}
\newcommand{\bR}{\overline{R}}
\newcommand{\si}{\sigma_i}
\newcommand{\sii}{\sigma_{i+1}}
\newcommand{\sj}{\sigma_j}
\newcommand{\Eii}{S^{(2)}}
\newcommand{\FSyz}{\pi^{Syz}}
\newcommand{\Gi}{G^{(1)}}
\newcommand{\Gii}{G^{(2)}}
\newcommand{\Giii}{G^{(3)}}
\newcommand{\Gj}{G^{(j)}}
\newcommand{\GI}{\Gi \negthinspace / \Gii}
\newcommand{\GII}{\Gii \negthinspace / \Giii}
\newcommand{\Gp}{G^{(p)}}
\newcommand{\Gpi}{G^{(p+1)}}
\newcommand{\GP}{\Gp \negthinspace / \Gpi}
\newcommand{\tx}{\tilde{x}}
\newcommand{\tz}{\tilde{z}}
\newcommand{\mkl}{m_{kl}}
\newcommand{\ra}{\rightarrow}
\newcommand{\thra}{\twoheadrightarrow}
\newcommand{\Sla}{S_{\lambda,a}}
\newcommand{\ola}{(\overline{\lambda a})^{-1}}
\newcommand{\Aij}{A_{ij}}
\newcommand{\aij}{a_{ij}}
\newcommand{\aik}{a_{ik}}
\newcommand{\ajk}{a_{jk}}
\newcommand{\akl}{a_{kl}}
\newtheorem{proposition}{Proposition}
\newtheorem{remark}{Remark}
\newtheorem{theorem}{Theorem}
\newtheorem{lemma}{Lemma}
\newtheorem{corollary}{Corollary}
\begin{document}
\title{The Pure Virtual Braid Group is Quadratic}
\author{Peter Lee}
\maketitle

\begin{abstract}
If an augmented algebra $K$ over $\Q$ is filtered by powers of its augmentation ideal $I$, the associated graded algebra $gr_I K$ need not in general be quadratic:  although it is generated in degree 1, its relations may not be generated by homogeneous relations of degree 2.   In this paper we give a sufficient criterion (called the PVH Criterion) for $gr_I K$ to be quadratic.  When $K$ is the group algebra of a group $G$, quadraticity is known to be equivalent to the existence of a (not necessarily homomorphic) universal finite type invariant for $G$.  Thus the PVH Criterion also implies the existence of such a universal finite type invariant for the group $G$.  We apply the PVH Criterion to the group algebra of the pure virtual braid group (also known as the quasi-triangular group), and show that the corresponding associated graded algebra is quadratic, and hence that these groups have a universal finite type invariant.
\paragraph{MSC subject class:} Primary: 16S34; Secondary: 16S37, 20F38, 20F40
\paragraph{Keywords:} Group rings; Quadratic and Koszul algebras; Pure Virtual Braid Group
\end{abstract}

\tableofcontents

\newpage

\section{Introduction}
\label{Intro}

This paper will ultimately be concerned with the pure virtual braid groups $\pmb{\PV}$, for all $n \in \N$, generated by symbols $R_{ij}$, $1 \leq i \neq j \leq n$, with relations the Reidemeister III moves (or quantum Yang-Baxter relations) and certain commutativities:

\begin{align}
R_{ij} R_{ik} R_{jk} & = R_{jk} R_{ik} R_{ij} \label{relations1} \\
R_{ij} R_{kl} & = R_{kl} R_{ij}, \quad \quad
\label{relations2}
\end{align}
with $i,j,k,l$ distinct.  This group is referred to as the quasi-triangular group $\pmb{QTr_n}$ in \cite{BEER}.

We will also be concerned with the related algebra $\pv$, generated by symbols $r_{ij}$, $1 \leq i \neq j \leq n$, with relations the `6-term' (or `classical Yang-Baxter') relations, and related commutativities:

\begin{align}
\yijk:=  [r_{ij},r_{ik}] + & [r_{ij},r_{jk}] + [r_{ik},r_{jk}] = 0,  \label{6Trels}
\\
& \cijkl:= [r_{ij},r_{kl}] = 0 \quad \quad \quad  \label{AlgCommutRels}
\end{align}
with $i,j,k,l$ distinct.  This algebra is the universal enveloping algebra of the quasi-triangular Lie algebra $\mathfrak{qtr_n}$ in \cite{BEER}.

The group of pure flat braids $\pmb{\PfB}$ is given by the same generators and relations as $\PV$, but with additional relations $R_{ij}R_{ji}=1$.  Associated to it is the algebra $\pmb{\pf}$ with the same generators and relations as $\pv$, but with the additional relations $r_{ij}+r_{ji}=0$.  $\PfB$ is referred to as $T \negthinspace r_n$ in \cite{BEER}, while $\pf$ is the universal enveloping algebra of the Lie algebra $\mathfrak{tr_n}$ of \cite{BEER}.

We will show that $\PV$ is a `quadratic group', in the sense that if its rational group ring $\qpv$ is filtered by powers of the augmentation ideal $I$, the associated graded ring $\grpv$ is a quadratic algebra:  i.e., a graded algebra generated in degree 1, with relations generated by homogeneous relations of degree 2.  We note that, in different language, this is the statement that $\PV$ has a universal finite-type invariant, which takes values in the algebra $\pv$.  We will also prove the corresponding statements for $\PfB$.

In \cite{Hutchings}, a criterion was given for the quadraticity of the pure braid group.  The proof relied on the geometry of braids embedded in $\mathbf{R}^3$.  In order to generalize this criterion to all finitely presented groups, we developed an algebraic proof of the criterion.  This proof turned out not to rely on the existence of an underlying group, and applies instead to algebras over $\Q$, filtered by powers of an augmentation ideal $I$.  Indeed, this criterion arguably lives naturally in an even broader context, such as perhaps augmented algebras over an operad (or the related `circuit algebras' of \cite{DBN-WKO}), although we do not investigate this broader context here.

Our criterion may be summarized as follows in the case of an augmented algebra $K$ with augmentation ideal $\IK$.  We denote by $\grK = \oplus_{m\geq 0} \IK^m/\IK^{m+1}$ the associated graded algebra of $K$ with respect to the filtration by powers of the augmentation ideal.  Let $A$ be the `quadratic approximation' of $\grK$, namely the graded algebra with the same generators and with ideal of relations generated by the degree 2 relations of $\grK$.  We will see that, in fact, we can view the generators of $K$ as also generating $A$, and interpret a certain space $\RF$ of free generators of the relations in $K$ as generating the relations in $A$.  It thus makes sense to ask whether the relations among the elements of $\RF$, when viewed as relations in $A$, also hold when these are viewed as relations in $K$ - i.e., informally, whether the syzygies in $A$ also hold in $K$.  We show that if the syzygies of $A$ do also hold in $K$, then $\grK$ is quadratic.  Furthermore, if $A$ is Koszul, we show that it is sufficient to check this criterion in degrees 2 and 3.  The precise statement of this criterion, which we call the PVH Criterion, appears in Theorem \ref{HutchingsCriterion}.

We note that it is easy to construct algebras $K$ such that $\grK$ is not quadratic (and in particular do not satisfy the above criterion).  For instance we could take $K=\Q\langle x\rangle / \langle x^3\rangle$, a graded algebra with $\langle x\rangle / \langle x^3\rangle$ as the augmentation ideal (so that $\grK = K$).  If we define $\RF$ to be the 2-sided $\Q\langle x\rangle$-module generated by the symbol set $\{r\}$, and $\delK: \RF \ra \Q\langle x\rangle$ as the $\Q\langle x\rangle$-module homomorphism such that $\delK r = x^3$, then $\RF$ generates the relations in $K$ via this map.  Since $x^3$ is cubic, the quadratic approximation $A$ is just $\Q\langle x\rangle$, but again $\RF$ generates the (single, trivial) relation in $A$ via the zero map $\delA: \RF \overset{0}{\ra} \Q\langle x\rangle$.  The kernel of $\delK$ and $\delA$ are what we call the syzygies of $K$ and $A$, respectively.  If we agree that the symbol $r$ has degree 2, then $\RF$ and $ker\ \delA \subseteq \RF$ inherit a grading from $\Q\langle x\rangle$.  We will see that there are always induced maps $\pi^{Syz}_p: ker\ \delK \ra [ker\ \delA]_p$ (where the latter refers to the degree $p$ component of $ker\ \delA$), and the PVH Criterion in degree $p\geq 2$ is that this map should be surjective.  Since, in our example, $ker\ \delK =\{0\}$ while $ker\ \delA = \RF \ne \{0\}$, we see that there can be no such surjection and $K$ will not satisfy the criterion (and, indeed, $\grK$ is not quadratic).

In Section 2 of this paper we give some useful background, both for the concept of quadraticity and for the pure virtual (and pure flat) braid groups.  We first briefly review two concepts that are perhaps better known than quadraticity, namely that of a group expansion, and that of 1-formality of a group.  In particular we show that quadraticity is equivalent to the existence of a particular type of group expansion, specifically a (non-homomorphic) universal finite type invariant for the group $G$.  We then introduce the weaker, graded versions of expansion and 1-formality.  We show that for a reasonably broad class of groups (including those with finite type rational homology, and in particular $\PfB$ and $\PV$) quadraticity is also equivalent to graded 1-formality.  Although this material is not needed in the sequel, it should help to situate the concept of quadraticity in terms of concepts that may be more broadly familiar.  We then give a more leisurely introduction to $\PV$ and $\PfB$, and discuss the notion of an expansion and quadraticity as applied to those groups.  In particular, we recall that in \cite{BEER} it was shown that $\PfB$ and $\PV$ are not 1-formal, which is the motivation for proving that that they are, nonetheless, quadratic.

In Section 3 we set the stage for and give a precise statement of the quadraticity criterion (see Theorem \ref{HutchingsCriterion}).  In Section 4 we supply details of proofs that were omitted in Section 3.  In Section 5 we specialize to $\PV$.  We present a basis for the quadratic dual algebra $\pvq$, and use this basis to compute the syzygies of $\pv$ and prove that $\PV$ satisfies the quadraticity criterion.  It follows that $\PV$ is quadratic (see Theorem \ref{ThmPvBQuadratic}).  Because $\PfB$ is a split quotient of $\PV$, it also follows that $\PfB$ is quadratic (see Corollary \ref{CorPfBQuadratic}), which confirms a result originally announced in \cite{BEER}.

Although Koszulness of the algebra $\pv$ was originally established in \cite{BEER}, we give a somewhat different proof in Subsection \ref{ProofOfKoszulness}, by exhibiting a quadratic Gr\"obner basis for $\pvq$.

Finally, in Section 6 we point out some possible future avenues of research.

We note that the quadraticity of $\PV$ was conjectured in \cite{BEER}.  As pointed out in section 8.5 of that paper, the quadraticity of $\PV$ implies that $H^*(\PV) \cong \mathfrak{pvb}_n^{!}$ as algebras (this is Conjecture 8.6 of \cite{BEER}).  Similarly, the quadraticity of $\PfB$ implies that $H^*(\PfB) \cong \mathfrak{\pf}^{!}$ as algebras (which is Theorem 8.5 of \cite{BEER}).  See Corollary \ref{CorQTrCohomology} and Corollary \ref{CorTrCohomology}.

After this paper was substantially completed, the result was communicated to Alexander Polishchuk, who pointed out that a theorem similar to Theorem \ref{HutchingsCriterion} was obtained in \cite{PosVish} in the context of the cohomology algebra of a nilpotent augmented coalgebra, albeit by different methods.  For this reason we have referred to the criterion in Theorem \ref{HutchingsCriterion} as the PVH Criterion (with reference to Positselski, Vishik and Hutchings).

\subsection{Acknowledgements}

Many thanks to A. Polishchuk, L. Positselski, P. Etingof and E. Rains for their feedback on the paper; and to the referee of the published version of this paper,\footnote{\emph{Sel. Math. New Ser.} (October 2012).  DOI: 10.1007/s00029-012-0107-1.} for pointing out a number of inaccuracies, and for making some helpful suggestions to improve the presentation.  And special thanks to my Ph.D. supervisor, D. Bar-Natan, for his long-term support, encouragement, insights and guidance.

\section{Background}

In this section, which is not needed in the sequel, we will give a fairly concise summary of two concepts, that of group expansion and that of 1-formality, and how they relate to quadraticity.  In particular we show that quadraticity is equivalent to the existence of a (non-homomorphic) expansion, also known as a (non-homomorphic) universal finite type invariant for the group $G$.  Although this material is not needed later in the paper, it should help to situate the significance of the concept of quadraticity in terms of concepts that may be more broadly familiar.  We also give additional background on the pure virtual braid groups $\PV$, emphasizing their relation to the better-known braids groups, and use this to give a pictorial interpretation of the $\PV$.  The material in this Chapter is mostly well known and we have attempted to give suitable references.

\subsection{Group Expansions}
\label{GroupExpSubsection}

In this subsection we suppose $K$ is an augmented algebra over $\Q$ generated by a set $X$, with augmentation ideal $\IK$ given as the kernel of the augmentation map which sends $x \mapsto 1$ for $x \in X$.  $K$ is filtered by powers of $\IK$.  In typical applications, $K$ will be the group algebra of some group $G$.

We recall that the quadratic approximation $q(\grK)$ is the graded algebra with the same generators as $\grK = \oplus_{p\geq 0} \IK^p/\IK^{p+1}$, but with only the quadratic relations of $\grK$. Thus if we put $V:= \IK/\IK^2$, and denote $\langle\partial\Rg\rangle$ the ideal generated in the tensor algebra $TV$ by the degree 2 relations $\partial\Rg$ (the notation will be explained below), then $q(\grK) = \frac{TV}{\langle\partial\Rg\rangle}$.  Moreover, there is a canonical projection $\mu: q(\grK) \twoheadrightarrow \grK$ which is induced by the identity in degree 1.

Following \cite{Lin} and \cite{DBN-WKO}, we define a (homomorphic, quadratic) expansion for $K$ to be a homomorphism of filtered algebras $Z: K \rightarrow \widehat{q(\grK)}$ (where the hat denotes completion) whose associated graded is the identity map in degree 1.  The latter requirement implies in particular that, on the generators $x\in X$ of $K$, $Z$ takes the form:

\begin{equation}
x \mapsto 1 + \tx + \text{ h.d.} \label{GradedIs1}
\end{equation}
where $\tx:= (x-1) \mod \IK^2 \in \IK/\IK^2$ is a generator in $q(\grK)$, and `h.d.' refers to higher degree terms in the $\tz,\ z\in X$.  Indeed, we then see that:

\begin{equation*}
(x -1) \mapsto \tx + \text{ h.d.}
\end{equation*}
and thus the associated graded map $grZ$ is the identity in degree 1.

Since $grZ \circ \mu = id$ in degree 1, the composition $grZ \circ \mu$ must be the identity in all degrees.  Consequently, $\mu$ must be injective, hence an isomorphism.

One can show that one gets an equivalent definition of expansion if one requires that $Z$ be a filtered algebra isomorphism $\widehat{K} \overset{\sim}{\rightarrow} \widehat{q(\grK)}$ (where again the hats denote completion), instead of a filtered homomorphism $K \rightarrow \widehat{q(\grK)}$ (but with the definition otherwise the same).

Thus the situation may be summarized by the following diagram:

\[
\xy
(-15,0)*+{\widehat{K}} = "1";
(15,0)*+{\widehat{q(\grK)}} = "2";
(0,-20)*+{\widehat{\grK}} = "3";
{\ar@{->}^{Z} "1"; "2"};
{\ar@{.>}_{Z'=\mu\circ Z} "1"; "3"};
{\ar@{->}^{gr Z} "3"; "2"};
{\ar@{->>}^{\mu} (15,-4)*{}; (5,-17)*{}}
\endxy
\]

\newpage

Then the following theorem is clear:

\begin{theorem}
The existence of a (homomorphic, quadratic) expansion $Z$ is equivalent to:
\begin{enumerate}
\item the existence of a filtered isomorphism $Z': \widehat{K} \overset{\sim}{\longrightarrow} \widehat{\grK}$ whose associated graded is the identity in degree 1; plus
\item the natural surjection $\mu$ being an isomorphism - i.e. $\grK$ is quadratic.
\end{enumerate}
\end{theorem}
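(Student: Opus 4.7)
The plan is a short diagram chase that exploits the equivalent formulation of expansion as a filtered isomorphism $\widehat{K} \overset{\sim}{\longrightarrow} \widehat{q(\grK)}$ noted in the excerpt.

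For the forward direction, suppose a (homomorphic, quadratic) expansion $Z$ exists. Passing to the isomorphism version, we have $Z: \widehat{K} \overset{\sim}{\longrightarrow} \widehat{q(\grK)}$ with $grZ = id$ in degree 1. As already observed before the theorem statement, $grZ \circ \mu = id$ in degree 1, and since both algebras are generated in degree 1 this forces $\mu$ to be an isomorphism in all degrees, giving (2). Setting $Z' := \mu \circ Z$ then produces a filtered isomorphism $\widehat{K} \overset{\sim}{\longrightarrow} \widehat{\grK}$ whose associated graded in degree 1 is $gr\mu \circ grZ = id$, giving (1).

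For the reverse direction, assume (1) and (2). By (2), $\mu$ extends to a filtered isomorphism $\widehat{q(\grK)} \overset{\sim}{\longrightarrow} \widehat{\grK}$ of completions. Define $Z := \mu^{-1} \circ Z': \widehat{K} \to \widehat{q(\grK)}$, a composition of filtered isomorphisms. Its associated graded in degree 1 is $(gr\mu)^{-1} \circ grZ' = id$, so $Z$ satisfies the isomorphism formulation of expansion, and by the stated equivalence this corresponds to a (homomorphic, quadratic) expansion in the original sense.

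The only point requiring care is the identification $gr\mu = \mu$ in the relevant degrees, which is immediate since $q(\grK)$ and $\grK$ are already graded and $\mu$ is degree-preserving. This, together with the fact that completion preserves isomorphisms, is all the bookkeeping needed; no substantive obstacle arises, which is why the author labels the theorem as ``clear''.
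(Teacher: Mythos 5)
Your proof is correct and follows essentially the same route the paper intends: the observation that $grZ\circ\mu$ is a graded endomorphism of $q(\grK)$ equal to the identity in degree 1 (hence everywhere, forcing $\mu$ to be injective and thus an isomorphism), together with the compositions $Z'=\mu\circ Z$ and $Z=\mu^{-1}\circ Z'$ read off the paper's diagram, is exactly why the author labels the theorem ``clear.'' No gaps; the bookkeeping you mention ($gr\mu=\mu$ in each degree and completion preserving isomorphisms) is indeed all that is needed.
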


We thus see that quadraticity can be viewed as a `graded' version of the existence of a homomorphic, quadratic expansion. It is a necessary, but not sufficient, condition for the existence of such an expansion.

\subsection{Hopf Algebra Expansions and 1-Formality}
\label{Sec1Formality}

In this Subsection \ref{Sec1Formality}, we take all our filtered or graded spaces (including $K$, $\grK$ and others) to be completed, whether or not this is indicated with a hat.

When $K=\widehat{\Q G}$ is a group algebra, one can step up the requirements on an expansion and require that the map $Z$ respect the Hopf structure -- one then speaks of a Hopf algebra expansion.  In this subsection, we will explain the Hopf algebra structures on $K$ and $q(\grK)$, and briefly discuss Hopf algebra expansions and the related concept of 1-formality.

One may view $K$ as a Hopf algebra using the co-product induced from $G$ (i.e. $\Delta(g) = g\otimes g$ for $g\in G$) and the augmentation map as the co-unit.  Thus $K$ has a Lie algebra of primitives $\Pri(K)$, which is the `Malcev Lie algebra' $M_G$ of $G$ \cite{Quillen2}.  Also, one can show that the degree 2 relations $\partial\Rg$ of $q(\grK)$ are all Lie elements of $TV$.  Hence the Hopf algebra structure on $\widehat{TV}$ (the completed tensor algebra) induces a Hopf algebra structure on ${q(\grK)}$, and it is fairly easy to show that the Lie algebra of primitives of ${q(\grK)}$ is just $\frac{\widehat{LieV}}{\langle\partial\Rg\rangle}$, the quotient of the completed free Lie algebra on $V$ by the Lie ideal generated by $\partial\Rg$.

It thus makes sense to ask whether there is a filtered isomorphism between the respective Lie algebras of primitives $Z: M_G \rightarrow \Pri({q(\grK)})$ whose associated graded is the identity in degree 1 (both Lie algebras have the filtrations induced from the lower central series).

Actually, 1-formality is commonly defined as the existence of a filtered isomorphism of Lie algebras $M_G \rightarrow hol(G)$ whose associated graded is the identity in degree 1, where $hol(G)$ is the `holonomy' Lie algebra of $G$ (defined below), which is in particular a graded Lie algebra completed with respect to the lower central series filtration.\footnote{See also the summary of 1-formality in \cite{PapSu}.}  However, if $G$ is a group whose rational homology is of finite type, then (as explained below) $hol(G)\cong \Pri({q(\grK))}$, so in these cases 1-formality is exactly the Lie algebra analogue of the existence of an expansion.

In particular, when $G$ has finite type rational homology, the existence of a Hopf algebra expansion implies 1-formality, since the expansion induces a filtered Lie algebra homomorphism on the primitives.\footnote{\label{ABCKT}In fact, by a theorem in \cite{ABCKT} (originally due to Morgan), 1-formality for a finitely presentable group is equivalent to the existence of a filtered isomorphism between $M_G$ and any quadratic Lie algebra (with the filtration induced by the lower central series), although their proof uses real coefficients. When this theorem applies, the existence of a Hopf algebra expansion would always imply 1-formality.}

One can also consider a weaker, graded version of 1-formality, i.e. we consider a group to be `graded 1-formal' if $grM_G \cong gr(hol(G))$ ($\cong hol(G)$), with the isomorphism being induced from the identity map in degree 1.

If the filtered isomorphism $Z$ exists, then its associated graded $grZ$ must be a graded Lie algebra isomorphism.  Thus, as with expansions, one has the following diagram:

\[
\xy
(-15,0)*+{M_G} = "1";
(15,0)*+{hol(G)} = "2";
(0,-20)*+{grM_G} = "3";
{\ar@{->}^{Z} "1"; "2"};
{\ar@{.>}_{Z'} "1"; "3"};
{\ar@{->}^{gr Z} "3"; "2"}
\endxy
\]

And again we have the theorem:

\begin{theorem}
1-formality is equivalent to:
\begin{enumerate}
\item the existence of a filtered Lie algebra isomorphism $Z': M_G \overset{\sim}{\longrightarrow} grM_G$ whose associated graded is the identity in degree 1; plus
\item the existence of a graded Lie algebra isomorphism $hol(G) \cong grM_G$ induced by the identity in degree 1 - - i.e. graded 1-formality.
\end{enumerate}
\end{theorem}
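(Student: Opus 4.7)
The plan is to mimic the proof of the preceding theorem on expansions, with the graded Lie algebra $hol(G)$ and the identification $gr(hol(G)) \cong hol(G)$ (which holds because $hol(G)$ is graded and complete with respect to the filtration induced by its grading) playing the role of $q(\grK)$ and the natural surjection $\mu$.

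For the forward implication, assume $Z\colon M_G \to hol(G)$ is a filtered Lie algebra isomorphism whose associated graded is the identity in degree 1. Passing to associated graded yields a graded Lie algebra isomorphism $grZ\colon grM_G \to gr(hol(G)) \cong hol(G)$ which is the identity in degree 1; this is precisely graded 1-formality, giving condition (2). Then $Z' := (grZ)^{-1} \circ Z\colon M_G \to grM_G$ is a composition of filtered Lie algebra isomorphisms, hence itself a filtered Lie algebra isomorphism, and its associated graded $grZ' = (grZ)^{-1} \circ grZ = id$ is (trivially) the identity in degree 1, giving condition (1).

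For the converse, suppose (1) and (2) hold, and let $\psi\colon grM_G \overset{\sim}{\longrightarrow} hol(G)$ be the graded Lie algebra isomorphism of (2), which, being graded and induced by the identity in degree 1, is itself a filtered isomorphism with $gr\psi = \psi$. Then $Z := \psi \circ Z'\colon M_G \to hol(G)$ is a filtered Lie algebra isomorphism whose associated graded $grZ = \psi \circ grZ'$ equals the identity in degree 1, so $G$ is 1-formal.

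The argument is thus purely formal, and no real obstacle arises in the proof itself. As with the expansion theorem, the genuine difficulty in establishing 1-formality for a particular group lies not in the formal decomposition above but in verifying condition (2), the graded analogue: graded 1-formality is the Lie-algebra counterpart of the statement ``$\grK$ is quadratic,'' and is typically the harder of the two conditions to check in practice.
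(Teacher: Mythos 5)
Your proposal is correct and follows essentially the same route as the paper, which simply records the decomposition $Z = grZ \circ Z'$ in a diagram and declares the theorem clear; your write-up just makes the routine verifications (that $grZ$ gives graded 1-formality and that $Z'=(grZ)^{-1}\circ Z$, resp.\ $Z=\psi\circ Z'$, have the required properties) explicit.
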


Thus graded 1-formality is a necessary, but not sufficient, condition for 1-formality.

We will now amplify briefly on graded 1-formality, and in particular explain how, for groups with finite type rational homology, graded 1-formality is equivalent to quadraticity.

\subsubsection{The Associated Graded of $M_G$}

For purposes of graded 1-formality we need a good description of $grM_G$.  By work of Quillen,\footnote{\cite{Quillen} and \cite{Quillen2}.  See also the summary of the construction of $M_G$ and $grM_G$ in \cite{PapSu}.} we have $grM_G \cong grG \otimes \Q$ as graded Lie algebras, where $grG:= \hat{\oplus}_{p\geq0}\GP$ (completed direct sum) and $\Gj$ is the $j$-th term in the lower central series of $G$.\footnote{See \cite{MKS}, section 5.7, for the Lie algebra structure on $grG$.}

As discussed in \cite{Quillen}, $\grK$ has the structure of a graded Hopf algebra which is primitively generated (specifically by $gr_1K$, all of which is primitive).  It follows by the Milnor-Moore theorem that $\grK \cong U(\Pri(\grK))$, where $\Pri(\grK)$ denotes the Lie algebra of primitives of $\grK$ and $U(-)$ denotes the universal enveloping algebra.

Again by \cite{Quillen}, there is an isomorphism of graded Lie algebras $grG \otimes \Q \cong \Pri(\grK)$ and hence $U(grG \otimes \Q) \cong \grK$.  In particular, we have a Lie algebra isomorphism $grM_G \cong \Pri(\grK)$.

\subsubsection{The Holonomy Lie Algebra and Its Associated Graded}

We now define the holonomy Lie algebra.  Let $X$ be an Eilenberg-MacLane $K(G,1)$ space for $G$, and put $H^i:=H^i(X,\Q)$ for $i\geq 0$.  Let $\cup: H^1 \wedge H^1 \rightarrow H^2$ be the cup product, and $\partial: H^{2*} \rightarrow H^{1*} \wedge H^{1*}$ be the dual map.  Then:

\begin{equation*}
hol(G) := \frac{\widehat{Lie(H^{1*})}}{\langle Im\ \partial \rangle}
\end{equation*}
where $\widehat{Lie(H^{1*})}$ is the free, graded Lie algebra generated by $H^{1*}$ and $\langle Im\partial \rangle$ is the ideal in $\widehat{Lie(H^{1*})}$ generated by $Im\ \partial$.  Thus we have by definition an exact sequence:

\begin{equation*}
H^{2*} \overset{\partial}{\rightarrow} H^{1*} \wedge H^{1*} \rightarrow hol(G)^2 \rightarrow 0
\end{equation*}
where $hol(G)^2$ is the degree 2 component of $hol(G)$.  Dually, $hol(G)^{2*} \cong ker\ \cup$.

In many cases, we have explicit information about $ker\ \cup$:\footnote{See \cite{Lambe}.  This theorem extends part of Theorem (8.1)' of \cite{Sullivan}.  See also \cite{Cenkl}.}

\begin{theorem}
\label{ThmSull}
Let $G$ be a group such that $H_n(G,\Q)$ is finite dimensional over $\Q$ for all $n$.  Then the following sequence is exact:
\begin{equation*}
0 \rightarrow (\GII \otimes \Q)^* \overset{d}{\longrightarrow} H^1\wedge H^1 \overset{\cup}{\longrightarrow} H^2
\end{equation*}
where $d$ is dual to the commutator map $[-,-]: \GI \wedge \GI \rightarrow \GII$.\footnote{We recall that $\GI \cong H^{1*}$ (see \cite{HilStam}).}
\end{theorem}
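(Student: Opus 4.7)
My plan is to derive the stated exact sequence from the Hochschild-Serre $5$-term exact sequence for the normal subgroup extension $1 \to \Gii \to G \to \GI \to 1$, using the finite-type hypothesis only to identify the cohomology of the abelian quotient $\GI$. With $\Q$-coefficients this $5$-term sequence reads
$$0 \to H^1(\GI, \Q) \overset{\text{inf}}{\to} H^1(G, \Q) \overset{\text{res}}{\to} H^1(\Gii, \Q)^{\GI} \overset{d_2}{\to} H^2(\GI, \Q) \overset{\text{inf}}{\to} H^2(G, \Q).$$
Since $H^1(-,\Q)=\text{Hom}(-,\Q)$ only depends on the abelianization, the first inflation is an isomorphism; by exactness the restriction map then vanishes, leaving the three-term sequence
$$0 \to H^1(\Gii, \Q)^{\GI} \overset{d_2}{\to} H^2(\GI, \Q) \overset{\text{inf}}{\to} H^2(G, \Q),$$
which already has exactly the shape of the desired exact sequence.

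Next I will match the three terms with those appearing in the theorem. A homomorphism $\Gii \to \Q$ is $\GI$-invariant under the conjugation action inherited from $G$ iff it vanishes on $[G,\Gii] = \Giii$, so $H^1(\Gii, \Q)^{\GI} = \text{Hom}(\GII, \Q) = (\GII \otimes \Q)^*$. The finite-type hypothesis makes $\GI \otimes \Q$ finite-dimensional, so $H^*(\GI, \Q)$ is an exterior algebra on $H^1(\GI, \Q) = H^1$, giving $H^2(\GI, \Q) \cong H^1 \wedge H^1$. Finally, the inflation $H^2(\GI, \Q) \to H^2(G, \Q)$ is induced by the abelianization $G \twoheadrightarrow \GI$, hence is a ring homomorphism; since it is an isomorphism in degree $1$, its action on the exterior square is the cup product of $H^*(G, \Q)$.

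The crux is to identify $d_2$ with the dual of the commutator $[-,-]: \GI \wedge \GI \to \GII$. My approach is to factor through the 2-step nilpotent quotient $G/\Giii$: the short exact sequence $1 \to \GII \to G/\Giii \to \GI \to 1$ is a \emph{central} extension of the abelian group $\GI$, classified by a $2$-cocycle $c \in Z^2(\GI, \GII)$ whose antisymmetric part is precisely the commutator pairing $\GI \wedge \GI \to \GII$. For $\varphi \in \text{Hom}(\GII, \Q)$, the push-forward extension $0 \to \Q \to E_\varphi \to \GI \to 0$ has class $[\varphi \circ c] \in H^2(\GI, \Q)$, and by naturality of the Hochschild-Serre transgression under the quotient map $G \twoheadrightarrow G/\Giii$ this class coincides with $d_2(\varphi)$. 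Hence $d_2(\varphi) = \varphi \circ [-,-]$, which is exactly the dual of the commutator map.

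The main obstacle I anticipate is this last identification, since $d_2$ is defined abstractly as a spectral-sequence differential; to pin it down one must unwind it via a set-theoretic section of $G \twoheadrightarrow \GI$ and verify that the resulting $\GII$-valued $2$-cocycle represents the commutator pairing under the identification $H^2(\GI, \GII) \cong \text{Hom}(\wedge^2 \GI, \GII)$ valid for abelian $\GI$. Once this identification is in hand, assembling the pieces above yields the exact sequence asserted in the theorem.
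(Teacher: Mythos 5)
Your proposal is correct, but note that the paper does not actually prove Theorem \ref{ThmSull}: it quotes it from the literature (Lambe, extending part of Sullivan's Theorem (8.1)', see also Cenkl--Porter), where it is obtained by rational homotopy-theoretic methods. Your derivation via the Lyndon--Hochschild--Serre five-term sequence for $1 \to \Gii \to G \to \GI \to 1$ is therefore a genuinely different, purely group-cohomological route, and it goes through: inflation $H^1(\GI,\Q)\to H^1(G,\Q)$ is an isomorphism, so restriction vanishes and the five-term sequence collapses to $0 \to H^1(\Gii,\Q)^{\GI} \to H^2(\GI,\Q) \to H^2(G,\Q)$; the identifications $H^1(\Gii,\Q)^{\GI}=\mathrm{Hom}(\GII,\Q)=(\GII\otimes\Q)^*$ and, using $\dim_{\Q}(\GI\otimes\Q)<\infty$, $H^2(\GI,\Q)\cong H^1\wedge H^1$ with inflation becoming the cup product of $G$, are all as you state. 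The one step that must be written out carefully is the one you flag: passing to the central extension $1\to\GII\to \Gi/\Giii\to\GI\to 1$ by naturality of the five-term sequence, the transgression sends $\varphi$ to the class of the pushed-forward extension, and under $H^2(\GI,\Q)\cong\mathrm{Hom}(\Lambda^2\GI,\Q)$ (valid since $\mathrm{Ext}^1_{\Z}(\GI,\Q)=0$, $\Q$ being divisible) this class is the antisymmetrization of the classifying cocycle, i.e.\ the commutator pairing; this is standard, and the sign or factor of $2$ ambiguity between the cocycle-antisymmetrization and cup-product identifications of $H^2(\GI,\Q)$ is harmless for exactness. As for what each approach buys: your argument is elementary and self-contained, and uses only $\dim_{\Q}H_1(G,\Q)<\infty$ rather than finite type in all degrees, so it is in fact slightly more general than the quoted statement; the cited sources embed the sequence in a broader rational-homotopy framework that the paper does not otherwise need.
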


Thus, when this theorem applies, $hol(G)^2 \cong \GII \otimes \Q$.

Now note that $U(hol(G))$ is generated by $H^{1*} \cong \GI \cong \IK/\IK^2$, so it has the same generators as $\grK$.  Also, from the preceding remark, if Theorem \ref{ThmSull} applies then $U(hol(G))$ must have the same quadratic relations as $U(grG \otimes \Q) \cong \grK$, but has no further defining relations.  Hence it follows that:

\begin{equation*}
U(hol(G)) \cong q(\grK)
\end{equation*}
whenever the result in Theorem \ref{ThmSull} (or similar theorems) applies.

\subsubsection{Relation Between Quadraticity and Graded 1-Formality}
\label{SecExp1Form}

We are now in a position to relate quadraticity with graded 1-formality, in cases where the results of Theorem \ref{ThmSull} or similar theorems hold.  Recall that quadraticity means that $\grK \cong q(\grK)$. Pulling together the various isomorphisms identified above, this is equivalent to $U(\Pri(\grK)) \cong U(hol(G))$ as associative algebras.

Both sides of the latter isomorphism have generators (as associative algebras) of degree 1, all of which are primitive elements, and the isomorphism (when it exists) comes from identifying generators in the respective algebras.  Hence the isomorphism respects the Hopf algebra structure.
It follows that the isomorphism descends to the respective Lie algebras of primitives, i.e. quadraticity is equivalent to $\Pri(\grK) \cong hol(G)$.

Recalling the result of Quillen mentioned earlier, i.e. $\Pri(\grK) \cong grM_G$, we find that quadraticity is equivalent to $grM_G \cong hol(G)$, which is just graded 1-formality.

In summary, we have shown that (when the results of Theorem \ref{ThmSull} or similar theorems hold), quadraticity is equivalent to graded 1-formality.\footnote{Alternatively, whenever the result in footnote \ref{ABCKT} applies, quadraticity is equivalent to graded 1-formality, whether or not $hol(G) \cong \Pri(q(\grK))$.}

\subsection{Universal Finite Type Invariants}

In this subsection, we give some background on the theory of finite type invariants, and explain how quadraticity is equivalent to the existence of a universal finite type invariant.  The existence of such an invariant is perhaps the central question of the theory of finite type invariants for any group.  Much of the material in this subsection comes from \cite{DBN-WKO}, to which the reader is referred for more on finite type theory.

A universal finite type invariant (`UFTI') for a group $G$ is just a not necessarily homomorphic (but still quadratic) expansion.  More precisely, an UFTI is a filtered linear map $Z: K \rightarrow \widehat{q(\grK)}$ whose associated graded is a left inverse for $\mu$, i.e. satisfies $grZ \circ \mu = id$ (in all degrees).  This is described by the following diagram:

\[
\xy
(-15,0)*+{K} = "1";
(15,0)*+{\widehat{q(\grK)}} = "2";
(0,-20)*+{\widehat{\grK}} = "3";
{\ar@{->}^{Z} "1"; "2"};
{\ar@{->}^{gr Z} "3"; "2"};
{\ar@{->>}^{\mu} (15,-4)*{}; (5,-17)*{}}
\endxy
\]

By an argument analogous to that given in Subsection \ref{GroupExpSubsection} the existence of a not necessarily homomorphic expansion implies quadraticity.  In fact, we will show that the existence of an UFTI is equivalent to quadraticity.  First, though, we need some background on finite type invariants.

We begin with the following sequence, which is clearly exact:

\begin{equation*}
0 \longrightarrow \IK^p/\IK^{p+1} \longrightarrow K / \IK^{p+1} \longrightarrow K / \IK^{p} \longrightarrow 0
\end{equation*}

We combine this with the surjection $q(\grK)^p \twoheadrightarrow \IK^p/\IK^{p+1}$ to get:

\[
\xy
(-20,0)*+{0} = "1"; (0,0)*+{\IK^p/\IK^{p+1}} = "2";
(25,0)*+{K / \IK^{p+1}} = "3"; (45,0)*+{K / \IK^{p}} = "4";
(60,0)*+{0} = "5"; (0,15)*+{q(\grK)^p} = "6";
{\ar "1"; "2"}; {\ar "2"; "3"}; {\ar "3"; "4"}; {\ar "4"; "5"}; {\ar@{->>}^{\mu} "6"; "2"}
\endxy
\]

Now we take linear duals - since the spaces are all vector spaces over $\Q$, the sequence remains exact:

\[
\xy
(40,0)*+{0} = "5"; (20,0)*++{(\IK^p/\IK^{p+1})^*} = "4";
(-5,0)*+{(K / \IK^{p+1})^*} = "3"; (-30,0)*+{(K / \IK^{p})^*} = "2";
(-50,0)*+{0} = "1"; (20,-15)*+{q(\grK)^{p*}} = "6";
{\ar "1"; "2"}; {\ar "2"; "3"}; {\ar^{\pi} "3"; "4"}; {\ar "4"; "5"}; {\ar@{>->}^{\mu^*} "4"; "6"}; {\ar@{.>}_{\pi'} "3"; "6"}
\endxy
\]

The space $q(\grK)^{p*}$ is referred to as the space of `weight systems of degree $p$'.  The space $(K / \IK^{p+1})^*$ is referred to as the space of `finite type invariants' (of type $p$).  A central question in the theory of finite type invariants of the group $G$ is whether every weight system of degree $p$ is induced by an invariant of type $p$ - i.e. whether the map $\pi'$ is surjective.

It is clear that surjectivity of $\pi'$ is equivalent to surjectivity of $\mu^*$, which in turn is equivalent to $\mu^*$ being an isomorphism (since $\mu^*$ is already injective).  Thus surjectivity of $\pi'$ is equivalent to quadraticity.  We will now show that quadraticity implies the existence of an UFTI, so that all three concepts are equivalent.

Indeed, suppose $K$ is quadratic.  Then for $p\in \N$, let $\{x_i\}_{i\in J_p}$ be a basis for $[q(\grK)]_p$, and let $\{x_i^*\}_{i\in J_p}$ be a dual basis pulled back to $(\grK)^*$ via $(\mu^*)^{-1}$.  Then, for $x_i^* \in (\IK^p/\IK^{p+1})^*$, let $v_i\in (K/\IK^{p+1})^*$ s.t. $\pi(v_i)=x_i^*$.  Finally, let $\rho_p: K \twoheadrightarrow K/\IK^{p+1}$ be the projection.  Now define:

\begin{equation*}
Z(-):= \sum_p \sum_{i\in J_p} x_i. (v_i\circ \rho_p) (-)
\end{equation*}

We need to check that $grZ\circ \mu = id$.  So let $x_i, i\in J_p$ be one of our chosen basis elements of $[q(\grK)]_p$, and let $\xi_i \in \IK^p$ be such that $\rho_p(\xi_i)=\mu(x_i) \in \IK^p/\IK^{p+1}$.  Then
\begin{align*}
grZ\circ \mu(x_i) & = \sum_q \sum_{j\in J_q} x_j.(v_j\circ \rho_q)(\xi_i) \mod [q(\grK)]_{\geq p+1} \\
& = \sum_{q\geq p} \sum_{j \in J_q} x_j v_j(\rho_q(\xi_i)) \mod [q(\grK)]_{\geq p+1} \\
& = \sum_{j\in J_p} x_j x_j^*(\mu(x_i)) = \sum_{j\in J_p} x_j \delta_{ij} = x_i
\end{align*}
as required.

\subsection{The Pure Virtual Braid Groups}

\subsubsection{Definition}

Recall that the braid group $\B$ is generated by the symbols $\{\si: i=1, \dots, (n-1)\}$, each corresponding to a braid with $n$ strands with the strand in position $i$ crossing over the adjacent strand to the `right' (i.e. the strand in position $(i+1)$), in a `positive' fashion:

\[
\xy
\vtwist~{(-3,3)}{(3,3)}{(-3,-3)}{(3,-3)};
(-12,-3)*{}; (-12,3)*{} **\dir{-};
(12,-3)*{}; (12,3)*{} **\dir{-};
(-7,0)*{\dots}; (7,0)*{\dots}; (-3,-5)*{i}; (3,-5)*{i \negthinspace + \negthinspace 1}
\endxy
\]
(where by convention all strands are oriented upwards).

The relations in $\B$ are the well-known Reidemeister III move and obvious commutativities:

\begin{align*}
\si\sii\si & = \sii\si\sii \\
\si\sj & = \sj\si \quad \quad \text{ for } |i-j|>1
\end{align*}

The (non-pure) \emph{virtual} braid group $\vB$ is generated by the symbols $\{\si: i=1, \dots, (n-1)\}$, referred to as (ordinary) crossings, and $\{s_i: i=1, \dots, (n-1)\}$, referred to as virtual crossings.  The $\{\si\}$ are subject to the same relations (Reidemeister III and commutativities) as above, while the $\{s_i\}$ are subject to the symmetric group relations:

\begin{align*}
s_i s_{i+1} s_i & = s_{i+1} s_i s_{i+1} \\
s_i s_j & = s_j s_i \quad \quad \text{ for } |i-j|>1 \\
s_i^2 &= 1
\end{align*}
which are like the braid group relations but also include the involutory relations $s_i^2=1$.

\newpage

There are also certain `mixed' relations between the $\si$ and the $s_i$:

\begin{align*}
s_i\sii^{\pm 1} s_i & = s_{i+1} \si^{\pm 1} s_{i+1} \\
s_i\sj & = \sj s_i \quad \quad \text{ for } |i-j|>1
\end{align*}

In pictures, ordinary crossings are depicted in the usual way, while virtual crossings $s_i$ are depicted as

\[
\xy
(0,0)*{} = "1";
(10,0)*{} = "2";
(0,-10)*{} = "3";
(10,-10)*{} = "4";
(0,-12)*{i};
(10,-12)*{i+1};
{\ar@{->} "3"; "2"};
{\ar@{->} "4"; "1"};
\endxy
\]

Thus the mixed relations may be depicted as:

\[
\xy
0;/r.12pc/:
(-45,0)*{\xy
(-20,0)*{\xy
(-5,10)*{\xy (-5,-5)*{}; (5,5)*{} **\crv{(-5,1)&(5,-1)}
\POS?(.5)*{}="x"; (-5,5)*{}; "x" **\crv{(-5,1)}; "x"; (5,-5)*{}
**\crv{(5,-1)} \endxy};
(5,0)*{\xy (-5,-5)*{}; (5,5)*{} **\crv{(-5,1)&(5,-1)}
\POS?(.5)*{\hole}="x"; (-5,5)*{}; "x" **\crv{(-5,1)}; "x"; (5,-5)*{}
**\crv{(5,-1)} \endxy};
(-5,-10)*{\xy (-5,-5)*{}; (5,5)*{} **\crv{(-5,1)&(5,-1)}
\POS?(.5)*{}="x"; (-5,5)*{}; "x" **\crv{(-5,1)}; "x"; (5,-5)*{}
**\crv{(5,-1)} \endxy};
(10,-15)*{}; (10,-5)*{} **\dir{-}; (-10,-5)*{}; (-10,5)*{}
**\dir{-}; (10,5)*{}; (10,15)*{} **\dir{-}
\endxy};
(0,0)*{=};
(20,0)*{\xy
(5,10)*{\xy (-5,-5)*{}; (5,5)*{} **\crv{(-5,1)&(5,-1)}
\POS?(.5)*{}="x"; (-5,5)*{}; "x" **\crv{(-5,1)}; "x"; (5,-5)*{}
**\crv{(5,-1)} \endxy};
(-5,0)*{\xy (-5,-5)*{}; (5,5)*{} **\crv{(-5,1)&(5,-1)}
\POS?(.5)*{\hole}="x"; (-5,5)*{}; "x" **\crv{(-5,1)}; "x"; (5,-5)*{}
**\crv{(5,-1)} \endxy};
(5,-10)*{\xy (-5,-5)*{}; (5,5)*{} **\crv{(-5,1)&(5,-1)}
\POS?(.5)*{}="x"; (-5,5)*{}; "x" **\crv{(-5,1)}; "x"; (5,-5)*{}
**\crv{(5,-1)} \endxy};
(-10,-15)*{}; (-10,-5)*{} **\dir{-}; (10,-5)*{}; (10,5)*{}
**\dir{-}; (-10,5)*{}; (-10,15)*{} **\dir{-};
\endxy}
\endxy};
(-5,0)*{,};
(-75,-20)*{\scriptstyle{i}};(-65,-20)*{\scriptstyle{i+1}};(-55,-20)*{\scriptstyle{i+2}};
(-35,-20)*{\scriptstyle{i}};(-25,-20)*{\scriptstyle{i+1}};(-15,-20)*{\scriptstyle{i+2}};
(5,-20)*{\scriptstyle{i}};(15,-20)*{\scriptstyle{i+1}};(25,-20)*{\scriptstyle{j}};(35,-20)*{\scriptstyle{j+1}};
(55,-20)*{\scriptstyle{i}};(65,-20)*{\scriptstyle{i+1}};(75,-20)*{\scriptstyle{j}};(85,-20)*{\scriptstyle{j+1}};
(45,0)*{\xy
(-25,0)*{\xy
(15,0)*{\xy (-5,-5)*{}; (5,5)*{} **\crv{(-5,1)&(5,-1)}
\POS?(.5)*{}="x"; (-5,5)*{}; "x" **\crv{(-5,1)}; "x"; (5,-5)*{}
**\crv{(5,-1)} \endxy};
(-5,-10)*{\xy (-5,-5)*{}; (5,5)*{} **\crv{(-5,1)&(5,-1)}
\POS?(.5)*{\hole}="x"; (-5,5)*{}; "x" **\crv{(-5,1)}; "x"; (5,-5)*{}
**\crv{(5,-1)} \endxy};
(10,-15)*{}; (10,-5)*{} **\dir{-}; (-10,-5)*{}; (-10,10)*{}
**\dir{-}; (0,-5)*{}; (0,10)*{}
**\dir{-}; (10,5)*{}; (10,10)*{} **\dir{-}; (20,5)*{}; (20,10)*{}
**\dir{-}; (20,-5)*{}; (20,-15)*{}
**\dir{-}
\endxy};
(0,0)*{=};
(25,0)*{\xy
(-5,0)*{\xy (-5,-5)*{}; (5,5)*{} **\crv{(-5,1)&(5,-1)}
\POS?(.5)*{\hole}="x"; (-5,5)*{}; "x" **\crv{(-5,1)}; "x"; (5,-5)*{}
**\crv{(5,-1)} \endxy};
(15,-10)*{\xy (-5,-5)*{}; (5,5)*{} **\crv{(-5,1)&(5,-1)}
\POS?(.5)*{}="x"; (-5,5)*{}; "x" **\crv{(-5,1)}; "x"; (5,-5)*{}
**\crv{(5,-1)} \endxy};
(10,-15)*{}; (10,-15)*{} **\dir{-}; (-10,-15)*{}; (-10,-5)*{}
**\dir{-}; (-10,5)*{}; (-10,10)*{}
**\dir{-};(0,5)*{}; (0,10)*{}
**\dir{-}; (0,-15)*{}; (0,-5)*{}
**\dir{-};(10,-5)*{}; (10,10)*{} **\dir{-}; (20,-5)*{}; (20,10)*{}
**\dir{-};
\endxy}
\endxy}
\endxy
\]
with similar pictures where the ordinary crossings are negative.

One can show that the subgroup generated by the $\{\si\}$ is just the braid group $\B$, and the subgroup generated by the $\{s_i\}$ is the symmetric group $S_n$.  There is also a surjection $\vB \thra S_n$ onto the symmetric group $S_n$ given by sending each $\si$ and $s_i$ to $s_i$.  The kernel of this map is referred to as the pure virtual braid group, $\PV$.  We very briefly sketch how to obtain a presentation for $\PV$ using the Reidemeister-Schreier method (a more thorough explanation of this method as applied to $\PV$, with proofs, is given in \cite{Bard}, and a general overview of the method appears in \cite{MKS}, Section 2.3).

We first exhibit a set of right coset representatives for $\PV$ in $\vB$.  Let us define $\mkl:= s_{k-1} s_{k-2} \dots s_l$ for $k>l$ and $\mkl=1$ otherwise.  Then

\begin{equation*}
\Lambda_n:= \lbrace \prod_{k=2}^n m_{k,j_k} \text{   s.t.   } 1 \leq j_k \leq k \rbrace
 \end{equation*}
coincides with $S_n$ and gives a `Schreier' set of coset representatives, i.e. if $\lambda \in \Lambda_n$ then also $\lambda'\in \Lambda_n$ whenever $\lambda'$ is an initial segment of the word $\lambda$.

Note that the $\mkl$ (for $k>l$) can be depicted as virtual braids which move the strand which is initially in position $k$ so that it is immediately to the left of the strand in position $l$:

\[
\xy
(-10,-8)*{}; (-10,-2)*{} **\dir{-};
(2,-8)*{}; (2,-2)*{} **\dir{-};
(4,-8)*{}; (4,-6)*{} **\dir{-};
(4,-6)*{}; (3,-5)*{} **\crv{(3.5,-5.5)};
(3,-5)*{}; (-11,-5)*{} **\dir{-};
(-11,-5)*{}; (-12,-4)*{} **\crv{(-11.5,-4.5)};
(-12,-4)*{}; (-12,-2)*{} **\dir{-};
(-4,-7)*{\dots};
(-15,-7)*{\dots};
(8,-7)*{\dots};
(-10,-10)*{l}; (4,-10)*{k}
\endxy
\]

To apply the Reidemeister-Schreier method, we now define a map $\vB \ra \Lambda_n$ sending $x \in \vB$ to its coset representative $\bx \in \Lambda_n$.  Then we have $x \bx^{-1} \in \PV$.  By the Reidemeister-Schreier theorem, $\PV$ is generated by

\begin{equation*}
\big\{ \Sla := \lambda a \ola \text{   s.t.   } \lambda \in \Lambda_n \text{  and  } a \in \{\sigma_i, s_i\} \big\}
\end{equation*}

One can show that the above generating set reduces to precisely the set $\{R_{ij}\}_{1 \leq i \ne j \leq n}$ (or their inverses, or the identity), where (for $i<j$):

\begin{align*}
R_{ij} &= m_{j,i+1} (\si s_i) m_{j,i+1}^{-1}, \text{     and} \\
R_{ji} &= m_{j,i+1} (s_i \si) m_{j,i+1}^{-1}
\end{align*}

Each such generator consists of a sequence of virtual crossings, followed by a single ordinary crossing, followed by additional virtual crossings which restore all strands to their original position.  A typical element $R_{ij}$ may be depicted:

\[
\xy
(0,0)*{}; (-10,0)*{} **\dir{-};
(0,0)*{}; (2,2)*{} **\crv{(2,0)};
(-10,-5)*{}; (-10,0)*{} **\crv{(-14,-2.5)};
(-9,-8)*{}; (-9,2) **\dir{-}  \POS?(.3)*{\hole} = "x";
(-10,-5)*{}; "x" **\dir{-};
"x"; (0,-5)*{} **\dir{-};
(0,-5)*{}; (2,-7)*{} **\crv{(2,-5)};
(2,-7)*{}; (2,-8)*{} **\dir{-};
(-1,-8)*{}; (-1,2)*{} **\dir{-};
(-3,-8)*{}; (-3,2)*{} **\dir{-};
(-6,-2)*{\dots};
(-9,-11)*{i}; (2,-11)*{j}
\endxy
\]
(again with all strands oriented upwards).

As explained in \cite{DBN-WKO}, one effect of the relations satisfied by the $\{s_i\}$ (mixed and unmixed) is that each generator $R_{ij}$ can be thought of as an ordinary (positive) crossing of strand $i$ over strand $j$, with any choice of virtual moves to get the strands `into position' for the ordinary crossing being equally valid.  From this point of view, the above generator $R_{ij}$ could have been depicted:

\[
\xy
(2,-2)*{}; (-3,-5)*{} **\dir{-};
(-3,-5)*{}; (-6,-5)*{} **\crv{(-4.5,-6)};
(-6,-5)*{}; (-9,0)*{} **\dir{-};
(2,-2)*{}; (2,2)*{} **\dir{-};
(2,-2)*{}; (5,-2)*{} **\crv{(2,0) & (5,0)};
(2,-2)*{}; (5,-2)*{} **\crv{(2,-4) & (5,-4)};
(-11,0)*{}; (-9,0)*{} **\crv{(-10,1)};
(-10,-5)*{}; (-11,0)*{} **\crv{(-14,-2.5)};
(-9,-8)*{}; (-9,2) **\dir{-}  \POS?(.3)*{\hole} = "x";
(-10,-5)*{}; "x" **\dir{-};
"x"; (0,-5)*{} **\dir{-};
(0,-5)*{}; (2,-7)*{} **\crv{(2,-5)};
(2,-7)*{}; (2,-8)*{} **\dir{-};
(-1,-8)*{}; (-1,2)*{} **\dir{-};
(-3,-8)*{}; (-3,2)*{} **\dir{-};
(-5,-2)*{\dots};
(-9,-11)*{i}; (2,-11)*{j}
\endxy
\]
For this reason, it is rather superfluous when drawing pictures to show any needed virtual crossings, and one usually omits them (as we will do going forward).

The procedure used to obtain the relations in $\PV$ is described in $\cite{Bard}$, and produces the following:

\begin{align*}
R_{ij}R_{ik}R_{jk} & = R_{jk}R_{ik}R_{ij} \text{     (Reid. III)}\\
R_{ij}R_{kl} & = R_{kl}R_{ij} \text{     Commutativities}
\end{align*}

The following pictures give the graphical representation of these relations.  One recognizes the Reidemeister III moves and commutativities which hold in the ordinary braid group (although the interpretation of the pictures in terms of algebraic generators is different).

\[
\xy
0;/r.12pc/:
(-45,0)*{\xy
(-20,0)*{\xy
(-5,10)*{\xy (-5,-5)*{}; (5,5)*{} **\crv{(-5,1)&(5,-1)}
\POS?(.5)*{\hole}="x"; (-5,5)*{}; "x" **\crv{(-5,1)}; "x"; (5,-5)*{}
**\crv{(5,-1)} \endxy};
(5,0)*{\xy (-5,-5)*{}; (5,5)*{} **\crv{(-5,1)&(5,-1)}
\POS?(.5)*{\hole}="x"; (-5,5)*{}; "x" **\crv{(-5,1)}; "x"; (5,-5)*{}
**\crv{(5,-1)} \endxy};
(-5,-10)*{\xy (-5,-5)*{}; (5,5)*{} **\crv{(-5,1)&(5,-1)}
\POS?(.5)*{\hole}="x"; (-5,5)*{}; "x" **\crv{(-5,1)}; "x"; (5,-5)*{}
**\crv{(5,-1)} \endxy};
(10,-15)*{}; (10,-5)*{} **\dir{-}; (-10,-5)*{}; (-10,5)*{}
**\dir{-}; (10,5)*{}; (10,15)*{} **\dir{-}
\endxy};
(0,0)*{=};
(20,0)*{\xy
(5,10)*{\xy (-5,-5)*{}; (5,5)*{} **\crv{(-5,1)&(5,-1)}
\POS?(.5)*{\hole}="x"; (-5,5)*{}; "x" **\crv{(-5,1)}; "x"; (5,-5)*{}
**\crv{(5,-1)} \endxy};
(-5,0)*{\xy (-5,-5)*{}; (5,5)*{} **\crv{(-5,1)&(5,-1)}
\POS?(.5)*{\hole}="x"; (-5,5)*{}; "x" **\crv{(-5,1)}; "x"; (5,-5)*{}
**\crv{(5,-1)} \endxy};
(5,-10)*{\xy (-5,-5)*{}; (5,5)*{} **\crv{(-5,1)&(5,-1)}
\POS?(.5)*{\hole}="x"; (-5,5)*{}; "x" **\crv{(-5,1)}; "x"; (5,-5)*{}
**\crv{(5,-1)} \endxy};
(-10,-15)*{}; (-10,-5)*{} **\dir{-}; (10,-5)*{}; (10,5)*{}
**\dir{-}; (-10,5)*{}; (-10,15)*{} **\dir{-};
\endxy}
\endxy};
(-5,0)*{,};
(-75,-20)*{i};(-65,-20)*{j};(-55,-20)*{k};
(-35,-20)*{i};(-25,-20)*{j};(-15,-20)*{k};
(5,-20)*{i};(15,-20)*{j};(25,-20)*{k};(35,-20)*{l};
(55,-20)*{i};(65,-20)*{j};(75,-20)*{k};(85,-20)*{l};
(45,0)*{\xy
(-25,0)*{\xy
(15,0)*{\xy (-5,-5)*{}; (5,5)*{} **\crv{(-5,1)&(5,-1)}
\POS?(.5)*{\hole}="x"; (-5,5)*{}; "x" **\crv{(-5,1)}; "x"; (5,-5)*{}
**\crv{(5,-1)} \endxy};
(-5,-10)*{\xy (-5,-5)*{}; (5,5)*{} **\crv{(-5,1)&(5,-1)}
\POS?(.5)*{\hole}="x"; (-5,5)*{}; "x" **\crv{(-5,1)}; "x"; (5,-5)*{}
**\crv{(5,-1)} \endxy};
(10,-15)*{}; (10,-5)*{} **\dir{-}; (-10,-5)*{}; (-10,10)*{}
**\dir{-}; (0,-5)*{}; (0,10)*{}
**\dir{-}; (10,5)*{}; (10,10)*{} **\dir{-}; (20,5)*{}; (20,10)*{}
**\dir{-}; (20,-5)*{}; (20,-15)*{}
**\dir{-}
\endxy};
(0,0)*{=};
(25,0)*{\xy
(-5,0)*{\xy (-5,-5)*{}; (5,5)*{} **\crv{(-5,1)&(5,-1)}
\POS?(.5)*{\hole}="x"; (-5,5)*{}; "x" **\crv{(-5,1)}; "x"; (5,-5)*{}
**\crv{(5,-1)} \endxy};
(15,-10)*{\xy (-5,-5)*{}; (5,5)*{} **\crv{(-5,1)&(5,-1)}
\POS?(.5)*{\hole}="x"; (-5,5)*{}; "x" **\crv{(-5,1)}; "x"; (5,-5)*{}
**\crv{(5,-1)} \endxy};
(10,-15)*{}; (10,-15)*{} **\dir{-}; (-10,-15)*{}; (-10,-5)*{}
**\dir{-}; (-10,5)*{}; (-10,10)*{}
**\dir{-};(0,5)*{}; (0,10)*{}
**\dir{-}; (0,-15)*{}; (0,-5)*{}
**\dir{-};(10,-5)*{}; (10,10)*{} **\dir{-}; (20,-5)*{}; (20,10)*{}
**\dir{-};
\endxy}
\endxy}
\endxy
\]

Thus, as mentioned earlier, pure virtual braids are given by the following presentation:

\begin{equation*}
\PV := \langle R_{ij} \rangle_{1\leq i \ne j \leq n} / \{\text{Reid. III, Commutativities} \}
\end{equation*}

As indicated previously, the presentation for $\PfB$ is the same, except that we also impose the relations $R_{ij}R_{ji}=1$.  One could instead retain only those generators $R_{ij}$ with $i<j$ (and keep the $\PV$ relations which involve only such generators, but leave out $R_{ij}R_{ji}=1$).  This gives an isomorphic group known as the group of pure descending braids.  The `descending' refers to the fact that if in all crossings $R_{ij}$ we impose $i<j$, this corresponds to the fact that strand $i$ always passes over strand $j$ so that the `height' of strands decreases as the labels increase.

We end this overview of $\PV$ with a brief explanation of its relation with the pure (ordinary) braid group $\PB$.  As discussed above, we may view the ordinary braid group $\B$ as the subgroup of $\vB$ generated by the $\{\si\}$ and the symmetric group $S_n$ as the subgroup generated by the $\{s_i\}$; then $\PB$ is by definition the kernel of the group homomorphism $\B \ra S_n$ which sends each $\si$ to $s_i$.  It is generated by the symbols $\{\Aij: 1\leq i<j \leq n\}$ (for the relations, see e.g. \cite{MarMc}).  As pointed out in \cite{BEER} (Section 4.3), there is a homomorphism $\Psi_n: \PB \ra \PV$ defined by:

\begin{equation*}
\Aij \mapsto R_{j-1,j} \dots R_{i+1,j} R_{ij} R_{ji} (R_{j-1,j} \dots R_{i+1,j})^{-1}
\end{equation*}

As noted in the Introduction, $\PB$ is also quadratic (as follows from results of \cite{Kohno}, as well as \cite{Hutchings}), and the associated graded of its group ring $gr\Q\PB=\pb$ is generated by symbols $\{\aij: 1\leq i \ne j \leq n; \aij = a_{ji}\}$, subject to the relations

\begin{align*}
[\aij, \aik + \ajk] & = 0 \\
[\aij,\akl] &= 0
\end{align*}

It is fairly easy to see that the induced map $\psi_n: \pb \ra \pv$ is given by $\aij \mapsto \rij + \rji$.  That $\psi_n$ is an algebra homomorphism is also verified directly in \cite{Polyak}, in which it is conjectured that $\psi_n$ is injective.  This conjecture, and the related claim that $\Psi_n$ is injective, were proved in \cite{BEER} (Proposition 4.8).

\subsubsection{Filtration, Associated Graded and Quadratic Approximation}
\label{SubsecPvBQuadApp}

We let $F$ denote the free (unital) algebra with the same generators $\{R_{ij}\}$ as $\PV$; and denote by $\IF$ and $\IK$ the augmentation ideals of $F$ and $K:=\Q \PV$, respectively, given by the kernels of the augmentation maps which send $R_{ij} \mapsto 1$.  Both $\IF$ and $\IK$ are generated by the sets $\{\bR_{ij}:= (R_{ij}-1) \}$.  The $\bR_{ij}$ may be depicted as:

\[
\xy
0;/r.12pc/:
(-20,0)*{\bR_{ij}:=};
(0,0)*{\xy (-5,-5)*{}; (5,5)*{} **\crv{(-5,1)&(5,-1)}
\POS?(.5)*{\bigcirc}="x"; (-5,5)*{}; "x" **\crv{(-5,1)}; "x"; (5,-5)*{}
**\crv{(5,-1)};
(-5,-8)*{i}; (5,-8)*{j}
\endxy};
(15,0)*{:=};
(30,0)*{
\xy (-5,-5)*{}; (5,5)*{} **\crv{(-5,1)&(5,-1)}
\POS?(.5)*{\hole}="x"; (-5,5)*{}; "x" **\crv{(-5,1)}; "x"; (5,-5)*{}
**\crv{(5,-1)};
(-5,-8)*{i}; (5,-8)*{j}
\endxy};
(40,0)*{-};
(50,0)*{\xy (-5,-5)*{}; (5,5)*{} **\crv{(-5,1)&(5,-1)}
\POS?(.5)*{}="x"; (-5,5)*{}; "x" **\crv{(-5,1)}; "x"; (5,-5)*{}
**\crv{(5,-1)};
(-5,-8)*{i}; (5,-8)*{j}
\endxy}
\endxy
\]

Re-expressed in terms of the $\bR_{ij}$, the Reidemeister III relation gives a `topological 8-term' relation:

\[
\xy
0;/r.12pc/:
(-20,0)*{\xy
(-5,10)*{\xy (-5,-5)*{}; (5,5)*{} **\crv{(-5,1)&(5,-1)}
\POS?(.5)*{\hole}="x"; (-5,5)*{}; "x" **\crv{(-5,1)}; "x"; (5,-5)*{}
**\crv{(5,-1)} \endxy};
(5,0)*{\xy (-5,-5)*{}; (5,5)*{} **\crv{(-5,1)&(5,-1)}
\POS?(.5)*{\hole}="x"; (-5,5)*{}; "x" **\crv{(-5,1)}; "x"; (5,-5)*{}
**\crv{(5,-1)} \endxy};
(-5,-10)*{\xy (-5,-5)*{}; (5,5)*{} **\crv{(-5,1)&(5,-1)}
\POS?(.5)*{\hole}="x"; (-5,5)*{}; "x" **\crv{(-5,1)}; "x"; (5,-5)*{}
**\crv{(5,-1)} \endxy};
(10,-15)*{}; (10,-5)*{} **\dir{-}; (-10,-5)*{}; (-10,5)*{}
**\dir{-}; (10,5)*{}; (10,15)*{} **\dir{-}
\endxy};
(0,0)*{-};
(20,0)*{\xy
(5,10)*{\xy (-5,-5)*{}; (5,5)*{} **\crv{(-5,1)&(5,-1)}
\POS?(.5)*{\hole}="x"; (-5,5)*{}; "x" **\crv{(-5,1)}; "x"; (5,-5)*{}
**\crv{(5,-1)} \endxy};
(-5,0)*{\xy (-5,-5)*{}; (5,5)*{} **\crv{(-5,1)&(5,-1)}
\POS?(.5)*{\hole}="x"; (-5,5)*{}; "x" **\crv{(-5,1)}; "x"; (5,-5)*{}
**\crv{(5,-1)} \endxy};
(5,-10)*{\xy (-5,-5)*{}; (5,5)*{} **\crv{(-5,1)&(5,-1)}
\POS?(.5)*{\hole}="x"; (-5,5)*{}; "x" **\crv{(-5,1)}; "x"; (5,-5)*{}
**\crv{(5,-1)} \endxy};
(-10,-15)*{}; (-10,-5)*{} **\dir{-}; (10,-5)*{}; (10,5)*{}
**\dir{-}; (-10,5)*{}; (-10,15)*{} **\dir{-};
\endxy};
(40,0)*{=}
\endxy
\]

\[
\xy
0;/r.12pc/:
(-20,0)*{\xy
(-5,10)*{\xy (-5,-5)*{}; (5,5)*{} **\crv{(-5,1)&(5,-1)}
\POS?(.5)*{\bigcirc}="x"; (-5,5)*{}; "x" **\crv{(-5,1)}; "x"; (5,-5)*{}
**\crv{(5,-1)} \endxy};
(5,0)*{\xy (-5,-5)*{}; (5,5)*{} **\crv{(-5,1)&(5,-1)}
\POS?(.5)*{\bigcirc}="x"; (-5,5)*{}; "x" **\crv{(-5,1)}; "x"; (5,-5)*{}
**\crv{(5,-1)} \endxy};
(-5,-10)*{\xy (-5,-5)*{}; (5,5)*{} **\crv{(-5,1)&(5,-1)}
\POS?(.5)*{\bigcirc}="x"; (-5,5)*{}; "x" **\crv{(-5,1)}; "x"; (5,-5)*{}
**\crv{(5,-1)} \endxy};
(10,-15)*{}; (10,-5)*{} **\dir{-}; (-10,-5)*{}; (-10,5)*{}
**\dir{-}; (10,5)*{}; (10,15)*{} **\dir{-}
\endxy};
(0,0)*{-};
(20,0)*{\xy
(5,10)*{\xy (-5,-5)*{}; (5,5)*{} **\crv{(-5,1)&(5,-1)}
\POS?(.5)*{\bigcirc}="x"; (-5,5)*{}; "x" **\crv{(-5,1)}; "x"; (5,-5)*{}
**\crv{(5,-1)} \endxy};
(-5,0)*{\xy (-5,-5)*{}; (5,5)*{} **\crv{(-5,1)&(5,-1)}
\POS?(.5)*{\bigcirc}="x"; (-5,5)*{}; "x" **\crv{(-5,1)}; "x"; (5,-5)*{}
**\crv{(5,-1)} \endxy};
(5,-10)*{\xy (-5,-5)*{}; (5,5)*{} **\crv{(-5,1)&(5,-1)}
\POS?(.5)*{\bigcirc}="x"; (-5,5)*{}; "x" **\crv{(-5,1)}; "x"; (5,-5)*{}
**\crv{(5,-1)} \endxy};
(-10,-15)*{}; (-10,-5)*{} **\dir{-}; (10,-5)*{}; (10,5)*{}
**\dir{-}; (-10,5)*{}; (-10,15)*{} **\dir{-};
\endxy};
\endxy
\]

\[
\xy
0;/r.12pc/:
(-50,0)*{\xy
(-45,0)*{+};
(-30,0)*{\xy
(-5,10)*{\xy (-5,-5)*{}; (5,5)*{} **\crv{(-5,1)&(5,-1)}
\POS?(.5)*{}="x"; (-5,5)*{}; "x" **\crv{(-5,1)}; "x"; (5,-5)*{}
**\crv{(5,-1)} \endxy};
(5,0)*{\xy (-5,-5)*{}; (5,5)*{} **\crv{(-5,1)&(5,-1)}
\POS?(.5)*{\bigcirc}="x"; (-5,5)*{}; "x" **\crv{(-5,1)}; "x"; (5,-5)*{}
**\crv{(5,-1)} \endxy};
(-5,-10)*{\xy (-5,-5)*{}; (5,5)*{} **\crv{(-5,1)&(5,-1)}
\POS?(.5)*{\bigcirc}="x"; (-5,5)*{}; "x" **\crv{(-5,1)}; "x"; (5,-5)*{}
**\crv{(5,-1)} \endxy};
(10,-15)*{}; (10,-5)*{} **\dir{-}; (-10,-5)*{}; (-10,5)*{}
**\dir{-}; (10,5)*{}; (10,15)*{} **\dir{-}
\endxy};
(-15,0)*{+};
(0,0)*{\xy
(-5,10)*{\xy (-5,-5)*{}; (5,5)*{} **\crv{(-5,1)&(5,-1)}
\POS?(.5)*{\bigcirc}="x"; (-5,5)*{}; "x" **\crv{(-5,1)}; "x"; (5,-5)*{}
**\crv{(5,-1)} \endxy};
(5,0)*{\xy (-5,-5)*{}; (5,5)*{} **\crv{(-5,1)&(5,-1)}
\POS?(.5)*{}="x"; (-5,5)*{}; "x" **\crv{(-5,1)}; "x"; (5,-5)*{}
**\crv{(5,-1)} \endxy};
(-5,-10)*{\xy (-5,-5)*{}; (5,5)*{} **\crv{(-5,1)&(5,-1)}
\POS?(.5)*{\bigcirc}="x"; (-5,5)*{}; "x" **\crv{(-5,1)}; "x"; (5,-5)*{}
**\crv{(5,-1)} \endxy};
(10,-15)*{}; (10,-5)*{} **\dir{-}; (-10,-5)*{}; (-10,5)*{}
**\dir{-}; (10,5)*{}; (10,15)*{} **\dir{-}
\endxy};
(15,0)*{+};
(30,0)*{\xy
(-5,10)*{\xy (-5,-5)*{}; (5,5)*{} **\crv{(-5,1)&(5,-1)}
\POS?(.5)*{\bigcirc}="x"; (-5,5)*{}; "x" **\crv{(-5,1)}; "x"; (5,-5)*{}
**\crv{(5,-1)} \endxy};
(5,0)*{\xy (-5,-5)*{}; (5,5)*{} **\crv{(-5,1)&(5,-1)}
\POS?(.5)*{\bigcirc}="x"; (-5,5)*{}; "x" **\crv{(-5,1)}; "x"; (5,-5)*{}
**\crv{(5,-1)} \endxy};
(-5,-10)*{\xy (-5,-5)*{}; (5,5)*{} **\crv{(-5,1)&(5,-1)}
\POS?(.5)*{}="x"; (-5,5)*{}; "x" **\crv{(-5,1)}; "x"; (5,-5)*{}
**\crv{(5,-1)} \endxy};
(10,-15)*{}; (10,-5)*{} **\dir{-}; (-10,-5)*{}; (-10,5)*{}
**\dir{-}; (10,5)*{}; (10,15)*{} **\dir{-}
\endxy}
\endxy};
(45,0)*{\xy
(-45,0)*{-};
(-30,0)*{\xy
(5,10)*{\xy (-5,-5)*{}; (5,5)*{} **\crv{(-5,1)&(5,-1)}
\POS?(.5)*{}="x"; (-5,5)*{}; "x" **\crv{(-5,1)}; "x"; (5,-5)*{}
**\crv{(5,-1)} \endxy};
(-5,0)*{\xy (-5,-5)*{}; (5,5)*{} **\crv{(-5,1)&(5,-1)}
\POS?(.5)*{\bigcirc}="x"; (-5,5)*{}; "x" **\crv{(-5,1)}; "x"; (5,-5)*{}
**\crv{(5,-1)} \endxy};
(5,-10)*{\xy (-5,-5)*{}; (5,5)*{} **\crv{(-5,1)&(5,-1)}
\POS?(.5)*{\bigcirc}="x"; (-5,5)*{}; "x" **\crv{(-5,1)}; "x"; (5,-5)*{}
**\crv{(5,-1)} \endxy};
(-10,-15)*{}; (-10,-5)*{} **\dir{-}; (10,-5)*{}; (10,5)*{}
**\dir{-}; (-10,5)*{}; (-10,15)*{} **\dir{-};
\endxy};
(-15,0)*{-};
(0,0)*{\xy
(5,10)*{\xy (-5,-5)*{}; (5,5)*{} **\crv{(-5,1)&(5,-1)}
\POS?(.5)*{\bigcirc}="x"; (-5,5)*{}; "x" **\crv{(-5,1)}; "x"; (5,-5)*{}
**\crv{(5,-1)} \endxy};
(-5,0)*{\xy (-5,-5)*{}; (5,5)*{} **\crv{(-5,1)&(5,-1)}
\POS?(.5)*{}="x"; (-5,5)*{}; "x" **\crv{(-5,1)}; "x"; (5,-5)*{}
**\crv{(5,-1)} \endxy};
(5,-10)*{\xy (-5,-5)*{}; (5,5)*{} **\crv{(-5,1)&(5,-1)}
\POS?(.5)*{\bigcirc}="x"; (-5,5)*{}; "x" **\crv{(-5,1)}; "x"; (5,-5)*{}
**\crv{(5,-1)} \endxy};
(-10,-15)*{}; (-10,-5)*{} **\dir{-}; (10,-5)*{}; (10,5)*{}
**\dir{-}; (-10,5)*{}; (-10,15)*{} **\dir{-};
\endxy};
(15,0)*{-};
(30,0)*{\xy
(5,10)*{\xy (-5,-5)*{}; (5,5)*{} **\crv{(-5,1)&(5,-1)}
\POS?(.5)*{\bigcirc}="x"; (-5,5)*{}; "x" **\crv{(-5,1)}; "x"; (5,-5)*{}
**\crv{(5,-1)} \endxy};
(-5,0)*{\xy (-5,-5)*{}; (5,5)*{} **\crv{(-5,1)&(5,-1)}
\POS?(.5)*{\bigcirc}="x"; (-5,5)*{}; "x" **\crv{(-5,1)}; "x"; (5,-5)*{}
**\crv{(5,-1)} \endxy};
(5,-10)*{\xy (-5,-5)*{}; (5,5)*{} **\crv{(-5,1)&(5,-1)}
\POS?(.5)*{}="x"; (-5,5)*{}; "x" **\crv{(-5,1)}; "x"; (5,-5)*{}
**\crv{(5,-1)} \endxy};
(-10,-15)*{}; (-10,-5)*{} **\dir{-}; (10,-5)*{}; (10,5)*{}
**\dir{-}; (-10,5)*{}; (-10,15)*{} **\dir{-};
\endxy}
\endxy}
\endxy
\]

In symbols, this is:

\begin{align}
Y_{ijk}:= R_{ij}R_{ik}R_{jk} & - R_{jk}R_{ik}R_{ij} \label{RelY} \\
= \bR_{ij}\bR_{ik}\bR_{jk} &- \bR_{jk}\bR_{ik}\bR_{ij} \notag \\
&+ \bR_{ij}\bR_{ik} + \bR_{ij}\bR_{jk} + \bR_{ik}\bR_{jk} - \bR_{jk}\bR_{ik} - \bR_{jk}\bR_{ij} - \bR_{ik}\bR_{ij} \notag \\
= \bR_{ij}\bR_{ik}\bR_{jk} &- \bR_{jk}\bR_{ik}\bR_{ij} \notag \\
&+ [\bR_{ij},\bR_{ik}] + [\bR_{ij},\bR_{jk}] + [\bR_{ik},\bR_{jk}] \notag
\end{align}
where square brackets denote the usual algebra commutator.

There are also commutativities:

\begin{equation}
C_{ij}^{kl} := R_{ij}R_{kl} - R_{kl}R_{ij} = [\bR_{ij}, \bR_{kl}] \label{RelC}
\end{equation}

\newpage

Both $F$ and $K$ are filtered by powers of their augmentation ideals.  We can form the respective associated graded algebras $gr F:= \oplus_{p\geq 0} \IF^p / \IF^{p+1}$ and $gr K:= \oplus_{p\geq 0} \IK^p / \IK^{p+1}$, and there are clearly surjections $\IF^p / \IF^{p+1} \twoheadrightarrow \IK^p / \IK^{p+1}$.  These factor through the quadratic approximation as follows:

\[
\xy
(-10,0)*{\IF^p / \IF^{p+1}}="1";
(15,0)*+{\frac{\IF^p / \IF^{p+1}}{\langle\partial \Rg\rangle}}="2";
(15,-15)*+{\IK^p / \IK^{p+1}}="3";
{\ar@{.>} "1"; "2"};
{\ar@{->} "1"; "3"};
{\ar@{.>} "2"; "3"}
\endxy
\]
where $\langle\partial \Rg\rangle$ is the ideal of relations in $gr F$ generated by $\partial \Rg:= \Q \{\yijk:=Y_{ijk} \mod \IF^3, \ \cijkl:= C_{ij}^{kl} \mod \IF^3\}$ (the notation $\partial \Rg$ will be explained later).

The graded algebra $\pv:= \oplus_{p\geq 0} \frac{\IF^p / \IF^{p+1}}{\langle\partial \Rg\rangle}$ is isomorphic to the quadratic approximation $q(\grK)$ to $gr K$ (see Subsection \ref{TwoCanonAssGradedAlg} below).  It is generated by $\IF/\IF^2 = \Q \{\rij:= \bR_{ij} \mod \IF^2\}$ and is subject to the relations (\ref{6Trels}) and (\ref{AlgCommutRels}):

\begin{align*}
\yijk = [r_{ij},r_{ik}] + & [r_{ij},r_{jk}] + [r_{ik},r_{jk}] = 0,
\\
& \cijkl= [r_{ij},r_{kl}] = 0 \quad \quad \quad
\end{align*}
which come from (\ref{RelY}) and (\ref{RelC}) modulo $\IF^3$.  We thus recover the presentation given in the Introduction for $\pv$.

\subsubsection{Expansions, 1-Formality and their Graded Versions For $\PV$}

It has been shown in \cite{BEER} that $\PfB$, and hence $\PV$, are not 1-formal.  However, in the same paper, Hilbert series for the integral homology of these groups were obtained, from which one can conclude in particular that the rational homology is finite dimensional (and hence of finite type).  Therefore, by the results of Subsection \ref{SecExp1Form}, the concept of quadraticity and graded 1-formality for these groups coincide.

In Theorem \ref{ThmPvBQuadratic} and its corollaries we show that $\PV$ and $\PfB$ are quadratic.  In different language, this shows that the $\PV$ and $\PfB$ have a universal finite type invariant, or are graded 1-formal.

\newpage

\section{Overview of the PVH Criterion}

\subsection{Group Theoretic Background}
\label{GroupTh}

Since the classic setting of the PVH criterion is that of group rings, we identify the attributes of group rings which we rely on and will want to see preserved in our generalized context.

We recall the following basic fact:

\begin{proposition} [See \cite{MKS}, s. 5.15]  If $G$ is given by the short exact sequence
\begin{equation*}
1 \rightarrow N \rightarrow FG \rightarrow G \rightarrow 1
\end{equation*}
where $FG$ is a free group generated by symbols $\{g_p:\ p \in P\}$ and $N$ is a normal subgroup of $FG$ generated by the set  $\{r_q:\ q \in Q\}$, then the rational group ring of $G$ is given by the exact sequence
\begin{equation*}
0 \rightarrow (N-1) \rightarrow \Q FG \rightarrow \QG \rightarrow 0
\end{equation*}
where $(N-1)$ is the two-sided ideal in $\Q FG$ generated by $\{(r_q-1):\ q \in Q\}$.
\end{proposition}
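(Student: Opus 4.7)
The plan is to identify the kernel of the canonical $\Q$-linear extension $\tilde{\pi}\colon \Q FG \to \QG$ of the quotient $\pi\colon FG \to G$ with the two-sided ideal $(N-1)$. Since $\pi$ is surjective, so is $\tilde{\pi}$, giving a short exact sequence $0 \to \ker\tilde{\pi} \to \Q FG \to \QG \to 0$, and the entire content of the proposition is the identification $\ker\tilde{\pi} = (N-1)$.

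The inclusion $(N-1)\subseteq \ker\tilde{\pi}$ is immediate: each generator $r_q - 1$ of $(N-1)$ maps to $\pi(r_q)-1 = 0$ since $r_q \in N$, and $\ker\tilde{\pi}$ is a two-sided ideal. For the reverse inclusion, I would choose a set $\{s_k\}$ of representatives for the left cosets of $N$ in $FG$, so that every element of $FG$ can be written uniquely as $s_k n$ with $n\in N$. Given $x = \sum_i \alpha_i w_i \in \ker\tilde{\pi}$, write $w_i = s_{k(i)} n_i$. The condition $\tilde{\pi}(x) = 0$ combined with the fact that distinct $s_k$ represent distinct cosets forces $\sum_{i:\,k(i)=k}\alpha_i = 0$ for each $k$. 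Therefore
\[
x \;=\; \sum_k s_k \sum_{i:\,k(i)=k} \alpha_i n_i \;=\; \sum_k s_k \sum_{i:\,k(i)=k} \alpha_i (n_i - 1),
\]
reducing the problem to showing that $n - 1 \in (N-1)$ for every $n\in N$.

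The main (and really only substantive) step is this last reduction. Since $N$ is the normal closure in $FG$ of $\{r_q\}_{q\in Q}$, any $n\in N$ is a product $n = \prod_j w_j r_{q_j}^{\epsilon_j} w_j^{-1}$ with $w_j \in FG$ and $\epsilon_j = \pm 1$. I would then exploit the elementary identities
\[
ab - 1 = a(b-1) + (a-1), \qquad a^{-1} - 1 = -a^{-1}(a-1), \qquad w r w^{-1} - 1 = w(r-1)w^{-1},
\]
valid in any group ring. The third identity together with $r_q^{-1} - 1 = -r_q^{-1}(r_q - 1)$ shows that each conjugate $w_j r_{q_j}^{\epsilon_j} w_j^{-1} - 1$ lies in $(N-1)$, and the first identity lets me expand the product $n-1$ telescopically into a sum of elements of that form, establishing $n - 1\in(N-1)$ by induction on the number of factors.

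The hard part is nothing conceptually deep; it is just the careful book-keeping in the last step to verify that the elementary identities above suffice to write $n-1$ as an $FG$-bilinear combination of the generators $r_q - 1$. Once that is done, the proposition follows by assembling the two inclusions.
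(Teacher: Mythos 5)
Your argument is correct and complete: the coset-representative decomposition to identify $\ker\tilde{\pi}$ with the ideal generated by $\{n-1 : n\in N\}$, followed by the elementary identities reducing to the generators $r_q-1$, is precisely the classical proof. The paper itself gives no proof, citing \cite{MKS} instead, and your argument is essentially the one found there, so there is nothing to add.
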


We can clearly restrict the second exact sequence to the exact sequence
\begin{equation}
0 \rightarrow (N-1) \rightarrow I_{FG} \rightarrow I_G \rightarrow 0
\label{IGmSES}
\end{equation}
where $I_{FG}$ and $I_G$ are the augmentation ideals of $\Q FG$ and $\QG$ respectively.

\subsection{Generalized Algebraic Setting}
\label{AlgSetting}

By analogy with the above group case, we take $\pmb{K}$ to be an augmented (unital) algebra over $\Q$ with 2-sided augmentation ideal $\IK$, and $\pmb{F}$ to be the free algebra over $\Q$ with the same generating set as $K$, with 2-sided augmentation ideal $\pmb{\IF}$.

In particular we assume an exact sequence:

\begin{equation*}
0 \longrightarrow \IK \longrightarrow K \overset{\epsilon}{\longrightarrow} \Q \longrightarrow 0
\end{equation*}

By analogy with the ideal $(N-1)$ in the group context, we let $\pmb{M} \subseteq \IF \subseteq F$ be a 2-sided ideal such that:

\begin{align*}
0 \longrightarrow & M \longrightarrow F \longrightarrow K \longrightarrow 0 \\
0 \longrightarrow & M \longrightarrow \IF \longrightarrow \IK \longrightarrow 0
\end{align*}
are exact.  In fact we will see below that we can (and do) assume $M\subseteq \IF^2$ with very little loss of generality.

We suppose $F$ (and $K$) to be generated by a set $\pmb{X}$.  Again by analogy with the group context, we will suppose $\IF$ (and $\IK$) to be the kernel of the algebra homomorphism which sends each $x\in X$ to $1\in \Q$.  Using this convention, we will exhibit an explicit grading on $F$ which induces the filtration by powers of $\IF$.

Specifically, we define $\pmb{\tX}:=\Q \{ \bx:=(x-1): x\in X\}$.  Then $F$ has the graded structure of tensor algebra\footnote{This can be seen as follows.  Essentially by definition, $F=TX$, the tensor algebra over $X$.  But then it is easy to see that the algebra homomorphism which maps $x \mapsto \bx +1$ converts from the $TX$ presentation to the $T\tX$ presentation.} over $\tX$, i.e. $F=T\tX = \oplus_{p\geq 0} \tX^p$ where $\pmb{\tX^p}$ consists of all sums of $p$-fold products of elements of $\tX$.  We obtain a filtration on $F$ by setting $\pmb{\Fgp}:=\oplus_{q\geq p} \tX^q$.  It should be clear that $\Fgp = \IF^p$.

We will henceforth in fact work with the completions $\hat{K}$ of $K$ (and $\hat{F}$ of $F$) with respect to the filtrations by powers of their respective augmentation ideals.  Our reason for doing this is that, by picking a suitable set of generators for $K$ (and $F$) and passing to the completions, we claim that we may arrange that $M\subseteq \IF^2$ (see Subsection \ref{EliminatingLinearRelations}), and we will assume this condition holds in the sequel.  Since we always work with the completions, we will simply denote them $K$ and $F$, without the hat.

\subsection{The Associated Graded Algebra}

$K$ is filtered by powers of $\IK$:

\begin{equation*}
\dots \hookrightarrow \IK^3 \hookrightarrow \IK^2 \hookrightarrow \IK \hookrightarrow \IK^0=K
\label{FiltrationByI}
\end{equation*}

We denote $\pmb{\grK}$ the associated graded of the above filtration.  We have $\grK \cong \bigoplus_p \IK^p / \IK^{p+1}$.  It is clear that $\grK$ is generated as an algebra by its degree one piece $\pmb{V}:= \IK/\IK^2$, a vector space over $\Q$.

\subsection{A Candidate Presentation for $\IK^p$}

In order to understand the quotients $\IK^p/\IK^{p+1}$, we will first seek a presentation for the $\IK^p$, for $p\geq 0$ (we take $\IK^0 = K$).  Note that, essentially by definition, $\IK^p = \IF^p / (M\cap \IF^p)$.  So we wish to determine $(M\cap \IF^p)$.

Let $\{y_q : q\in Q\}\subseteq \IF^2$ be a set of generators for $M$.  Then define $\pmb{\YF}:=\{Y_q: q\in Q\}$ to be a collection of symbols in 1-1 correspondence with the $\{y_q\}$.  Then we define $\pmb{\RF}$ to be the free 2-sided $F$-module generated by $\YF$ (if $\YF$ is empty, take $\RF=\{0\}$).  Now if we define a map $\pmb{\delK}: \RF \rightarrow F$ which maps $Y_q \mapsto y_q$ (and extend $\delK$ as an $F$-module homomorphism to $\RF$) then it is clear that

\begin{equation}
\label{DefOfKAsHomol}
K= \frac{F}{\delK \RF}
\end{equation}

$\RF$ inherits both a graded and a filtered structure from $F$.  Specifically, if we define:

\begin{equation*}
\Kipq := \tX^q. \Q\YF. \tX^{p-q-2}
\end{equation*}
and
\begin{equation}
\Kip := \sum_{q=0}^{p-2} \Kipq \label{RKmDef}
\end{equation}
then $\RF = \oplus_{p \geq 0} \RKp$ gives a graded structure on $\RF$.

We get a filtered structure by defining

\begin{equation*}
\pmb{\RKgp} := \bigoplus_{q\geq p} \Rg_q
\end{equation*}
and we have the filtration of $\RF$:
\begin{equation*}
\dots \hookrightarrow \Rg_{\geq 3} \hookrightarrow \Rg_{\geq 2} = \RF
\end{equation*}

By construction, $\delK(\RKgp) \subseteq (M\cap \IF^p)$.  In fact we will prove in Section \ref{PVHCriterion} that we have the following:

\begin{proposition}
\label{IStability}
The PVH Criterion (to be defined in Theorem \ref{HutchingsCriterion}) is met if and only if, for all $p\geq 2$, we have $\delK(\RKgp) = (M\cap \IF^p)$, and hence $\IK^p \cong \IF^p / \delK \RKgp$.
\end{proposition}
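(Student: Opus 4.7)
The plan is to establish the equivalence by matching the PVH Criterion to a leading-term lifting procedure. The containment $\delK(\RKgp) \subseteq M \cap \IF^p$ is immediate from the definitions: a typical generator of $\Rg_r$ with $r \geq p$ has the form $\tx_1\cdots\tx_i\cdot Y_q\cdot\tx_{i+1}\cdots\tx_{r-2}$, and $\delK$ sends it to $\tx_1\cdots\tx_i\cdot y_q\cdot\tx_{i+1}\cdots\tx_{r-2} \in \tX^i\cdot\IF^2\cdot\tX^{r-i-2} \subseteq \IF^r \subseteq \IF^p$, with the output lying in $M$ by construction. The case $p=2$ is automatic since $M\subseteq\IF^2$ by our standing assumption, so the real work is the reverse containment $M\cap\IF^p\subseteq\delK(\RKgp)$ for $p\geq 3$. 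The key elementary observation underlying both directions is that if $r = r_q + r_{q+1} + \cdots \in \Rg_{\geq q}$, then the degree-$q$ component of $\delK(r)$ is exactly $\delA(r_q)$: the contributions from $r_i$ with $i>q$ land in $\IF^i\subseteq\IF^{q+1}$, while applying $\delK$ to $r_q$ produces a degree-$q$ piece $\delA(r_q)$ plus strictly higher-degree tails (coming from the higher components of the $y_q$'s in $\IF^{\geq 3}$).

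For the forward direction (PVH $\Rightarrow$ equality), given $m\in M\cap\IF^p$, pick any $r\in\RF=\Rg_{\geq 2}$ with $\delK(r)=m$, using the surjection $\delK:\RF\to M$. Let $q\geq 2$ be the leading degree of $r$. If $q\geq p$ we are done; otherwise $q<p$ and by the observation above the degree-$q$ component of $m$ equals $\delA(r_q)$. Since $m\in\IF^p$, this component vanishes, so $r_q\in[\ker\delA]_q$. The PVH Criterion at degree $q$ furnishes $s\in\ker\delK\cap\Rg_{\geq q}$ with $\FSyz_q(s)=s_q=r_q$. Then $r':=r-s$ still satisfies $\delK(r')=m$, but now $r'\in\Rg_{\geq q+1}$. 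Iterating at most $p-2$ times yields $r\in\RKgp$ with $\delK(r)=m$, as required.

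For the reverse direction (equality $\Rightarrow$ PVH), let $\alpha\in[\ker\delA]_p$, so $\alpha\in\Rg_p$ with $\delA(\alpha)=0$. By the leading-term observation, $\delK(\alpha)\in\IF^p$ with degree-$p$ component $\delA(\alpha)=0$, hence $\delK(\alpha)\in M\cap\IF^{p+1}$. Applying the hypothesized equality at level $p+1$ yields $\beta\in\Rg_{\geq p+1}$ with $\delK(\beta)=\delK(\alpha)$. Then $s:=\alpha-\beta\in\Rg_{\geq p}\cap\ker\delK$ has leading component $s_p=\alpha$, proving surjectivity of $\FSyz_p$. Once $\delK(\RKgp)=M\cap\IF^p$ is established for all $p\geq 2$, the isomorphism $\IK^p\cong\IF^p/\delK\RKgp$ is immediate from the identification $\IK^p=\IF^p/(M\cap\IF^p)$, which follows from surjectivity of $\IF^p\twoheadrightarrow\IK^p$ and the short exact sequence $0\to M\to\IF\to\IK\to 0$.

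The main technical point—more a bookkeeping subtlety than a genuine obstacle—is keeping straight the two interacting gradings: the augmentation grading on $F=T\tX$ in which each $\tx$ has weight $1$, and the grading on $\RF$ in which each $Y_q$ has weight $2$ while the $\tx$'s retain weight $1$. The compatibility $\delK(\Rg_r)\subseteq\IF^r$ is what makes the leading-term argument function at all, and the definition of $\Rg_p$ in (\ref{RKmDef}) is arranged precisely to ensure this. Once that compatibility is internalized, both directions reduce to the same formal degree-by-degree manipulation: equality of images at all degrees is the exact condition needed to split the obstruction class of a relation among the $y_q$'s into a genuine syzygy plus a higher-order correction.
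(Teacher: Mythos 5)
Your proof is correct, and it gets to the result by a genuinely more elementary route than the paper does. The paper packages the whole argument homologically: it extends the commutative diagram relating $\RK_{\geq p+1}$, $\RKgp$, $\RKp$ to $\IF^{p+1}$, $\IF^p$, $\Xp$ by kernels and cokernels, applies the Snake Lemma to obtain the exact sequence $ker\ \delK \xrightarrow{\FSyz_p} ker\ \delA \rightarrow \IF^{p+1}/\delK\Rg_{\geq p+1} \xrightarrow{\nu_p} \IF^p/\delK\RKgp \rightarrow A^p \rightarrow 0$, reads off that surjectivity of $\FSyz_p$ is equivalent to injectivity of $\nu_p$, and then telescopes the maps $\nu_q$ down to $\IK^p$, so that injectivity of all of them is equivalent to the isomorphism $\IF^p/\delK\RKgp \cong \IK^p$, i.e.\ to $\delK\RKgp = M\cap\IF^p$. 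Your argument replaces this machinery with a direct successive-approximation chase on leading terms: the identity $\pi^0_q\circ\delK = \delA\circ\pi^1_q$ (your ``leading-term observation'' is exactly the commutativity of the paper's square, together with $\delK(\Rg_i)\subseteq\IF^i$) lets you correct any preimage of $m\in M\cap\IF^p$ by a syzygy supplied by $\FSyz_q$ so as to raise its leading degree, terminating after at most $p-2$ steps; conversely, equality at level $p+1$ lets you complete any $\alpha\in[ker\ \delA]_p$ to an element of $ker\ \delK$ with leading component $\alpha$. Your forward induction is precisely the element-level unwinding of the injectivity of the composite $\IF^p/\delK\RKgp\rightarrow\IK^p$, and your reverse step is the element-level form of exactness at $ker\ \delA$ in the snake sequence. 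What your version buys is self-containedness and transparency about the completed setting (only finitely many corrections are ever needed, since only degrees below $p$ matter); what the paper's version buys is reuse, since the same long exact sequence immediately yields the identification $A^p\cong\IK^p/\IK^{p+1}$ used in the proof of Theorem \ref{HutchingsCriterion}, which your Proposition-specific argument does not produce by itself.
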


\subsection{A Graded Approximation to $K$}

We may `approximate' $K$ by taking a graded version of the construction (\ref{DefOfKAsHomol}) of $K$, i.e. we define an algebra

\begin{equation}
\label{DefOfA}
\pmb{A}:= \frac{\oplus_{p\geq 0} \Xp}{\oplus_{p\geq 0} \delA \RKp}
\end{equation}
where $\delA$ is a graded version of $\delK$.

More precisely, we have projections:

\begin{equation*}
\pi_p^0: \Xgp \twoheadrightarrow \Xp
\end{equation*}
which are the identity on $\Xp$ and send $\oplus_{q > p} \tX^q$ to 0.  Similarly, we have projections:

\begin{equation*}
\pi_p^1: \RKgp \twoheadrightarrow \RKp
\end{equation*}
which are the identity on $\RKp$ and send $\oplus_{q > p} \Rg_q$ to 0.

Then $\delA$ is defined by:

\begin{equation*}
\delA(z) = \pi_p^0 \circ \delK (z) \text{      for } z\in \RKp
\end{equation*}

\newpage

These definitions are encapsulated in the following diagram, which is commutative and has exact rows:

\[
\xy
(-30,0)*+{0}="1";
(-15,0)*+{\RK_{\geq p+1}}="2";
(0,0)*+{\RKgp}="3";
(15,0)*+{\RKp}="4";
(30,0)*+{0}="5";
(-30,-15)*+{0}="6";
(-15,-15)*+{\IF^{p+1}}="7";
(0,-15)*+{\IF^p}="8";
(15,-15)*+{\Xp}="9";
(30,-15)*+{0}="10";
{\ar "1";"2"}; {\ar^{\iota} "2";"3"};
{\ar^{\pi^1_p} "3";"4"}; {\ar "4";"5"};
{\ar "6";"7"}; {\ar^{\iota} "7";"8"};
{\ar^{\pi^0_p} "8";"9"}; {\ar "9";"10"};
{\ar^{\delK^{res}} "2";"7"}; {\ar^{\delK} "3";"8"};
{\ar^{\delA} "4";"9"}
\endxy
\]
where $\delK^{res}$ denotes the restriction and $\iota$ denotes the inclusions.

\subsection{The Quadratic Approximation}
\label{TwoCanonAssGradedAlg}

In this subsection, we show that the graded algebra $A$ is the `quadratic approximation' $q(\grK)$ of $\grK$, in the sense that  $A$ has the same generators as $\grK$, and has relations generated by the degree 2 relations of $\grK$.

We noted previously that $\grK$ is generated by its degree 1 piece $V=\IK/\IK^2$.  In fact, because we take $M \subseteq \IF^2$, we have

\begin{equation*}
\IK / \IK^2 = \frac{\IF / M}{\IF^2 / (M \cap \IF^2)} \cong \frac{\IF / M}{\IF^2 / M} \cong \IF / \IF^2
\end{equation*}
so that actually we may view $V$ as $V=\IF/\IF^2$.

With that in mind, it should be clear that we have an isomorphism

\begin{align*}
\tX & \overset{\sim}{\longrightarrow} V=\IF/\IF^2 \\
(x-1) & \mapsto (x-1) + \IF^2
\end{align*}
which extends to an isomorphism of graded algebras $\Pi: F = T\tX \overset{\sim}{\longrightarrow} TV$ (where $TV$ is the tensor algebra of $V$ over $\Q$) by the  universal property of tensor algebras.

Now we identify $V \otimes V = \IF/\IF^2 \otimes \IF / \IF^2$, and define $\Rq:= ker(m_K:\IF/\IF^2 \otimes \IF / \IF^2 \rightarrow \IK^2/\IK^3)$.  Here $m_K$ is the composition

\begin{equation*}
\IF/\IF^2 \otimes \IF / \IF^2 \overset{m_F}{\rightarrow} \IF^2 / \IF^3 \overset{p}{\twoheadrightarrow} \IK^2 / \IK^3
\end{equation*}
where the first map $m_F$ is the isomorphism induced from multiplication in $F$, and the second map $p$ is induced from the projection $F \twoheadrightarrow K$.

The quadratic approximation $q(\grK)$ is formally defined as $q(\grK) := T V / \langle \Rq \rangle$, where $\langle R \rangle$ is the two-sided ideal in $T V$ generated by the vector subspace $\Rq \subseteq V \otimes V$.  Thus $A$ and $q(\grK)$ at least have spaces of generators which are isomorphic via the map $\Pi$.  The following lemma effectively tells us that $\delA\Q\YF = R$ and hence that $A$ and $q(\grK)$ have the same relations:

\begin{lemma}
We have $\Rq \cong (M+\IF^3) / \IF^3 \cong \delA \Q\YF$.
\label{RqStructure}
\end{lemma}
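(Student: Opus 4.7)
The plan is to identify both $\Rq$ and $\delA\Q\YF$ with the common subspace $\Q\{y_q^{(2)} : q \in Q\} \subseteq \tX^2$, where $y_q^{(2)}$ denotes the degree-$2$ component of the generator $y_q \in \IF^2$ under the tensor-algebra grading $F = \bigoplus_p \tX^p$. Both isomorphisms will then follow, glued by the identification $\Pi : \tX^2 \xrightarrow{\sim} V \otimes V$.

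For $\Rq \cong (M+\IF^3)/\IF^3$, I will use that $m_F : V \otimes V \to \IF^2/\IF^3$ is the canonical isomorphism coming from the tensor-algebra structure (this is where $\Pi$ enters), so that $\Rq = \ker m_K$ identifies with $\ker p$, where $p : \IF^2/\IF^3 \twoheadrightarrow \IK^2/\IK^3$. Because $M \subseteq \IF^2$, one has $\IK^2 \cong \IF^2/M$ and, inside $K$, $\IK^3 \cong (M + \IF^3)/M$, giving $\IK^2/\IK^3 \cong \IF^2/(M+\IF^3)$. The map $p$ is then simply the further quotient, whose kernel is $(M+\IF^3)/\IF^3$.

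For $(M+\IF^3)/\IF^3 \cong \delA\Q\YF$, I will use that $\Rg_2 = \Q\YF$ and compute $\delA$ directly on generators: for $Y_q \in \Q\YF$, $\delA(Y_q) = \pi_2^0(\delK(Y_q)) = \pi_2^0(y_q) = y_q^{(2)}$, so $\delA\Q\YF = \Q\{y_q^{(2)}\}$. Separately, any element of $M$ has the form $\sum_i a_i y_{q_i} b_i$ with $a_i, b_i \in F$. Writing $a_i = \epsilon(a_i) + a_i'$ and $b_i = \epsilon(b_i) + b_i'$ with $a_i', b_i' \in \IF$, the cross terms lie in $\IF \cdot \IF^2 \cdot F + F \cdot \IF^2 \cdot \IF \subseteq \IF^3$, and $y_{q_i} - y_{q_i}^{(2)} \in \IF^3$ as well. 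Thus modulo $\IF^3$ only $\sum_i \epsilon(a_i)\epsilon(b_i)\, y_{q_i}^{(2)}$ remains, so $(M+\IF^3)/\IF^3 = \Q\{y_q^{(2)}\}$.

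The only mildly subtle point is the computation for the second isomorphism: although $M$ is a two-sided ideal, so a priori elements involve arbitrary left and right multiplications by $F$, the tensor-algebra grading combined with $y_q \in \IF^2$ forces every non-constant multiplier to push the product into $\IF^3$, leaving only scalar multiples of the $y_q^{(2)}$ behind. Once this is verified, both chains of identifications funnel into the same subspace of $\tX^2 \cong V \otimes V$, and the lemma follows. The rest is routine unwinding of the definitions of $\delA$, $\pi_p^0$, $m_K$, and of the standing assumption $M \subseteq \IF^2$.
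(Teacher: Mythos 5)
Your proposal is correct and follows essentially the same route as the paper: the first identification is obtained by the same chain of isomorphism-theorem manipulations, using $M\subseteq \IF^2$ to rewrite $\IK^2/\IK^3$ as $\IF^2/(\IF^3+M)$ and read off $\ker p = (M+\IF^3)/\IF^3$, pulled back to $V\otimes V$ via $m_F^{-1}$. Your explicit verification that $(M+\IF^3)/\IF^3$ is spanned by the classes of the $y_q$ (splitting multipliers into scalar plus augmentation-ideal parts) and hence coincides with $\delA\Q\YF$ is exactly the routine step the paper leaves implicit, and it is carried out correctly.
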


\begin{proof}
It suffices to determine the kernel $ker(\IF^2 / \IF^3 \overset{p}{\twoheadrightarrow} \IK^2 / \IK^3)$, and then pull the result back to $V \otimes V$ via the isomorphism $m_F^{-1}$.  We have:

\begin{equation*}
\frac{\IK^2}{\IK^3} = \frac{\IF^2 / M}{\IF^3 / (M \cap \IF^3)} \cong \frac{\IF^2 / M}{(\IF^3 + M) / M} \cong \frac{\IF^2}{\IF^3 + M} \cong \frac{\IF^2 / \IF^3}{(\IF^3 + M) / \IF^3}
\end{equation*}

It follows that $ker\ p$ must be the last denominator, i.e. $(\IF^3 + M) / \IF^3$, which is what we needed.
\end{proof}

It is immediate from the lemma that in fact $\oplus_{p \geq 2} \delA \RKp = \langle R \rangle$, and hence that $A \cong q(\grK)$.

We will denote by $A^m$ the $m$-th graded piece of $A$.  We note that since $A$ has the same generators and the same quadratic relations as $\grK$, there is always a surjection $A \twoheadrightarrow \grK$.  Quadraticity of $\grK$ is thus equivalent to the fact that this surjection is an isomorphism  $A^m \cong \IK^m/\IK^{m+1}$, for all $m$.  We will often use this alternative definition of quadraticity.

\subsection{The PVH Criterion}

We now resume the thread of our development of the PVH Criterion.  Recall that we have the exact, commutative diagram:

\[
\xy
(-30,0)*+{0}="1";
(-15,0)*+{\RK_{\geq p+1}}="2";
(0,0)*+{\RKgp}="3";
(15,0)*+{\RKp}="4";
(30,0)*+{0}="5";
(-30,-15)*+{0}="6";
(-15,-15)*+{\IF^{p+1}}="7";
(0,-15)*+{\IF^p}="8";
(15,-15)*+{\Xp}="9";
(30,-15)*+{0}="10";
{\ar "1";"2"}; {\ar "2";"3"};
{\ar^{\pi^1_p} "3";"4"}; {\ar "4";"5"};
{\ar "6";"7"}; {\ar "7";"8"};
{\ar^{\pi^0_p} "8";"9"}; {\ar "9";"10"};
{\ar^{\delK^{res}} "2";"7"}; {\ar^{\delK} "3";"8"};
{\ar^{\delA} "4";"9"}
\endxy
\]

We extend the right half of the above diagram by adding kernels at the top:

\[
\xy
(0,15)*+{ker\ \delK}="12";
(20,15)*+{ker\ \delA}="13";
(0,0)*+{\RKgp}="3";
(20,0)*+{\RKp}="4";
(0,-15)*+{\IF^p}="8";
(20,-15)*+{\Xp}="9";
{\ar "12";"3"};
{\ar "13";"4"}; {\ar^{\FSyz_p} "12";"13"};
{\ar@{->>}^{\pi^1_p} "3";"4"};
{\ar@{->>}^{\pi^0_p} "8";"9"}; {\ar^{\delK} "3";"8"};
{\ar^{\delA} "4";"9"}
\endxy
\]
where $\FSyz_p$ is the map induced from $\pi^1_p$ on kernels, and we have abbreviated $\delK |_{\RKgp}$ as $\delK$, and $\delA |_{\RKp}$ as $\delA$.\footnote{Note the $\pi_p^{Syz}: \RF \ra [ker\ \delA]_p$ referred to in the Introduction are just the $\FSyz_p$ above extended by $0$ outside of $ker\ \delK|_{\RKgp}$.}

\newpage

Now, with notation as in the above diagram, and with the assumptions in Subsection \ref{AlgSetting}\footnote{The statement about Koszulness, however, relies on results about Koszul algebras whose graded components are finitely generated over the ground ring.  Hence, for purposes of this part of the theorem, we assume the algebra $K$ to be finitely generated, which is sufficient to ensure that $A^m$ is a finite dimensional $\Q$-vector space for all $m$. \label{fnOnFiniteness}}:

\begin{theorem}[PVH Criterion]
$K$ is quadratic if $\FSyz_p$ is surjective for all $p\geq 2$, i.e. informally if `the syzygies of $A$ also hold in $K$'.

If $A$ is Koszul,\footnote{In fact, $A$ need only be 2-Koszul, i.e. its Koszul complex need only be exact up to homological degree 2 inclusive. \label{OnlyNeedDeg3}} then we need only check that this criterion holds for degree 2 and 3 syzygies of $A$.
\label{HutchingsCriterion}
\end{theorem}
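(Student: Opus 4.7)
The plan is to deduce Proposition \ref{IStability} (in the direction PVH $\Rightarrow$ equality) from the PVH hypothesis by induction on $p$, observe that this yields quadraticity, and then exploit Koszulness of $A$ to reduce the criterion from all degrees to degrees $2$ and $3$. First I would verify that Proposition \ref{IStability} implies quadraticity: granted $\IK^p = \IF^p/\delK(\RKgp)$, the short exact sequence $0 \to \IK^{p+1} \to \IK^p \to \IK^p/\IK^{p+1} \to 0$ identifies the quotient with $\IF^p/(\IF^{p+1} + \delK(\RKgp))$. Since $\delK(\Rg_q) \subseteq \IF^q$ and $\RKgp = \RKp \oplus \Rg_{\geq p+1}$, this coincides with $\IF^p/(\IF^{p+1} + \delK(\RKp))$; reducing modulo $\IF^{p+1}$ converts $\delK|_{\RKp}$ into $\delA$, yielding $\Xp/\delA(\RKp) = A^p$ and inverting the canonical surjection $A^p \twoheadrightarrow \IK^p/\IK^{p+1}$.

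For the induction itself, the base case $p=2$ is immediate since $M \subseteq \IF^2$ gives $M \cap \IF^2 = M = \delK(\RF)$. For the step, assume $\delK(\RKgp) = M \cap \IF^p$ and take $m \in M \cap \IF^{p+1}$. By induction $m = \delK(r)$ with $r \in \RKgp$; decompose $r = r_p + r'$ with $r_p \in \RKp$ and $r' \in \Rg_{\geq p+1}$. Since $m$ and $\delK(r')$ both lie in $\IF^{p+1}$, so does $\delK(r_p)$, i.e.\ $\delA(r_p) = 0$. The PVH hypothesis in degree $p$ then produces $\tilde{r} \in \ker\delK \cap \RKgp$ with $\pi^1_p(\tilde{r}) = r_p$; writing $\tilde{r} = r_p + \tilde{r}'$ with $\tilde{r}' \in \Rg_{\geq p+1}$, the difference $r - \tilde{r}$ lies in $\Rg_{\geq p+1}$ and satisfies $\delK(r - \tilde{r}) = m$, completing the induction.

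For the Koszul refinement, the key input is that when $A$ is $2$-Koszul, $\ker \delA$ is generated as a two-sided $F$-module by its degree $3$ component. Given $s \in \ker \delA \cap \RKp$ for $p \geq 4$, I would write $s = \sum_i a_i \tau_i b_i$ with $\tau_i \in \ker\delA \cap \Rg_3$ and $a_i, b_i$ monomials in $\tX$, lift each $\tau_i$ via $\FSyz_3$ to $\tilde{\tau}_i \in \ker\delK$, and set $\tilde{s} := \sum_i a_i \tilde{\tau}_i b_i$, which lies in $\ker\delK$ by the $F$-bilinearity of $\delK$. The corrections $\tilde{\tau}_i - \tau_i$ lie in $\Rg_{\geq 4}$, so the error terms $a_i(\tilde{\tau}_i - \tau_i)b_i$ land in $\Rg_{\geq p+1}$ and are killed by $\pi^1_p$, giving $\pi^1_p(\tilde{s}) = s$. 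Thus surjectivity of $\FSyz_3$ together with $2$-Koszulness propagates to $\FSyz_p$ for all $p \geq 4$.

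The main obstacle will be justifying the $F$-bimodule generation statement underlying the Koszul reduction: classical Koszul theory places the first syzygies of the quadratic ideal over $A$ in degree $3$ (spanned by $V \otimes R \cap R \otimes V$), but transferring this from the $A$-bimodule setting to the $F$-bimodule setting in which $\RF$ actually lives requires checking that no additional syzygies arise when one works over the free algebra rather than over $A$. The degree $2$ case of the criterion must also be handled separately and reflects the freedom in the choice of generators $y_q$ of $M$; when they are chosen to be linearly independent modulo $\IF^3$ it becomes automatic.
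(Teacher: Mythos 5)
Your first half is fine: the induction showing that surjectivity of $\FSyz_p$ forces $\delK(\RKgp)=M\cap\IF^p$ for all $p$, and the ensuing identification $\IK^p/\IK^{p+1}\cong \Xp/\delA(\RKp)=A^p$, is an elementwise unpacking of exactly what the paper extracts from the same diagram via the Snake Lemma (Lemma \ref{HexLemma2} and Proposition \ref{IStability}), so that part of Theorem \ref{HutchingsCriterion} is established by your argument.

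The Koszul reduction, however, has a genuine gap. Your key input -- that for $2$-Koszul $A$ the kernel of $\delA$ is generated as a two-sided $F$-module by its degree $3$ component -- is false, and not merely an unverified transfer from the $A$-bimodule to the $F$-bimodule setting. Take $A=\Q[x,y]$ (Koszul, one relation $\rho=xy-yx$, one symbol $Y$ with $\delA Y=\rho$): the degree $3$ part of $\ker\delA$ is zero (equivalently $A^{!3}=0$), yet $Y\rho-\rho Y$ is a nonzero degree $4$ element of $\ker\delA$, so it cannot be written as $\sum_i a_i\tau_i b_i$ with $\tau_i$ of degree $3$. What ($2$-)Koszulness actually gives is the exactness of the sequence (\ref{DResolution}): each graded piece of $\ker\delA$ is spanned by the images of the overlaps $\xmi\cap\xmj$, and these split into the adjacent ones $\xmi\cap R_{m,i+1}\cong V^{\otimes i}\otimes(\Rq\otimes V\cap V\otimes\Rq)\otimes V^{\otimes m-i-3}$, which are padded degree $3$ syzygies, and the non-adjacent ones ($j>i+1$), the `trivial' syzygies of non-overlapping relations, which occur in every degree $\geq 4$ and lie outside the bimodule generated by the degree $3$ kernel. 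The repair is the paper's route: the trivial syzygies lift to $\ker\delK$ automatically (e.g.\ $Y_q\,\delK(Y_{q'})-\delK(Y_q)\,Y_{q'}$ is a global syzygy), so they never need checking, and your lift-and-pad argument via $\FSyz_3$ applies verbatim to the adjacent part; but the generation statement you rely on must be replaced by this two-type decomposition, otherwise the surjectivity of $\FSyz_p$ for $p\geq 4$ is not proved.
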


This theorem generalizes a result first obtained in \cite{Hutchings}, where $K$ was the group ring of the pure braid group (see also \cite{BNStoi}).  We give the proof in Section \ref{PVHCriterion}.  As was pointed out to me by Alexander Polishchuk, the result also follows from the paper \cite{PosVish}, whenever the algebra $K$ is finitely generated.

\subsection{Checking the PVH Criterion in Degree 2}

The following proposition gives a sufficient, though not necessary, condition for the criterion to hold in degree 2.

\begin{proposition}
\label{HutchingsDeg2}
Let $\{y_q: q \in Q \}$ be a set of generators for $M$ as a two-sided $F$-module.  If the $\{ y_q + \IF^3 : q \in Q \}$ are linearly independent in $R \cong (M+\IF^3)/\IF^3$, then the PVH Criterion is satisfied in degree 2.
\end{proposition}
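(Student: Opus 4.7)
The plan is to show that the hypothesis forces $\ker(\delA|_{\Rg_2}) = 0$, so that the PVH Criterion in degree 2 --- the surjectivity of $\FSyz_2$ onto that kernel --- becomes automatic. First I would unwind the degree 2 objects: by equation (\ref{RKmDef}) only the summand with position index $0$ contributes when $p=2$, so
\begin{equation*}
\Rg_2 = \Rg_{2,0} = \tX^0 \cdot \Q\YF \cdot \tX^0 = \Q\YF,
\end{equation*}
which is freely spanned by $\{Y_q\}_{q\in Q}$. By construction, $\delA|_{\Rg_2}$ sends $Y_q \mapsto \pi_2^0(\delK Y_q) = \pi_2^0(y_q)$, i.e.\ to the degree 2 component of $y_q$ under the grading $F = \oplus_p \tX^p$.

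Next I would translate the hypothesis through Lemma \ref{RqStructure}. Under the canonical isomorphism $\IF^2/\IF^3 \overset{\sim}{\to} \tX^2$ induced by $\IF^p = \tX^{\geq p}$, the class $y_q + \IF^3$ corresponds to $\pi_2^0(y_q) = \delA Y_q$. Since the lemma identifies $R$ with $(M+\IF^3)/\IF^3$, the assumed linear independence of $\{y_q + \IF^3\}$ in $R$ is equivalent to linear independence of $\{\delA Y_q\}$ in $\tX^2$; that is, $\delA|_{\Rg_2} \colon \Q\YF \to \tX^2$ is injective, so $\ker(\delA|_{\Rg_2}) = 0$.

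Finally, the PVH Criterion in degree 2 is the surjectivity of $\FSyz_2 \colon \ker(\delK|_{\Rg_{\geq 2}}) \to \ker(\delA|_{\Rg_2})$, which has zero target and is therefore trivially surjective. There is no substantive obstacle in the argument --- the content is just recognizing, via Lemma \ref{RqStructure}, that $\delA Y_q$ is literally the class of $y_q$ modulo $\IF^3$, so the hypothesis is simply another way of saying $\delA|_{\Rg_2}$ has no kernel. This also explains the ``not necessary'' qualifier: the criterion can still hold in degree 2 when $\ker(\delA|_{\Rg_2}) \ne 0$, provided its elements lift to $\ker(\delK|_{\Rg_{\geq 2}})$.
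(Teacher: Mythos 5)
Your proposal is correct and follows essentially the same route as the paper: the hypothesis says precisely that $\delA$ restricted to $\Rg_2=\Q\YF$ is injective (since $\delA Y_q$ is the class of $y_q$ modulo $\IF^3$ under the identification of Lemma \ref{RqStructure}), so $\ker \delA=0$ in degree 2 and $\FSyz_2$ is vacuously surjective. The paper's proof is just a one-line version of this same argument.
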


\begin{proof}
Indeed, $\delA: \Q \YF \rightarrow V^{\otimes 2} \cong \IF^2 / \IF^3$ is then an inclusion, so $ker\ \delA=0$ and $\FSyz$ is automatically surjective.
\label{RkHutchingsDeg2}
\end{proof}

\subsection{How the PVH Criterion is Useful}
\label{HutchingsUsefulness}

Assuming the requirements of Theorem \ref{HutchingsCriterion} are met, we can conclude that $\grK$ is quadratic if, informally, the syzygies of $A$ also hold in $K$.

It is often the case that the syzygies of a quadratic algebra can be determined quite explicitly, using quadratic duality.  Essentially, if the quadratic algebra $A$ is Koszul, then the syzygies are generated by $A^{!3}$ (i.e. the degree $3$ part of the quadratic dual $A^!$ of $A$). Thus the problem of comparing syzygies is reduced to the finite, computable problem of determining a basis for $A^{!3}$ and checking whether the resulting syzygies of $A^3$ also hold in $K$.

In the context of $\PV$, it was shown in \cite{BEER} that $\pv$ is Koszul (a different proof is provided in Section \ref{ProofOfKoszulness} of this paper), so we only need to check the PVH Criterion in degree 2 and 3.

\newpage

One also knows certain syzygies that are satisfied by the group algebra $gr\Q\PV$, particularly the syzygy known as the Zamolodchikov tetrahedron:\footnote{The picture builds on xy-pic templates due to Aaron Lauda -- see \cite{Lau}.}

\[
\xy 0;/r.11pc/: 
(0,120)*+{
\xy 
(-15,-20)*{}="T1";
(-5,-20)*{}="T2";
(5,-20)*{}="T3";
(15,-20)*{}="T4";
(-14,20)*{}="B1";
(-5,20)*{}="B2";
(5,20)*{}="B3";
(15,20)*{}="B4";
"T1"; "B4" **\crv{(-15,-7) & (15,-5)}
\POS?(.25)*{\hole}="2x" \POS?(.47)*{\hole}="2y" \POS?(.59)*{
\hole}="2z";
"T2";"2x" **\crv{(-4,-12)};
"T3";"2y" **\crv{(5,-10)};
"T4";"2z" **\crv{(16,-9)};
(-15,-5)*{}="3x";
"2x"; "3x" **\crv{(-18,-10)};
"3x"; "B3" **\crv{(-13,0) & (4,10)}
\POS?(.31)*{\hole}="4x" \POS?(.53)*{\hole}="4y";
"2y"; "4x" **\crv{};
"2z"; "4y" **\crv{};
(-15,10)*{}="5x";
"4x";"5x" **\crv{(-17,6)};
"5x";"B2" **\crv{(-14,12)}
\POS?(.6)*{\hole}="6x";
"6x";"B1" **\crv{(-14,18)};
"4y";"6x" **\crv{(-8,10)};
\endxy
}="T";
(-40,95)*+{
\xy 
(-15,-20)*{}="b1";
(-5,-20)*{}="b2";
(5,-20)*{}="b3";
(14,-20)*{}="b4";
(-14,20)*{}="T1";
(-5,20)*{}="T2";
(5,20)*{}="T3";
(15,20)*{}="T4";
"b1"; "T4" **\crv{(-15,-7) & (15,-5)}
\POS?(.25)*{\hole}="2x" \POS?(.47)*{\hole}="2y" \POS?(.65)*{ \hole}="2z";
"b2";"2x" **\crv{(-5,-15)};
"b3";"2y" **\crv{(5,-10)};
"b4";"2z" **\crv{(14,-9)};
(-15,-5)*{}="3x";
"2x"; "3x" **\crv{(-15,-10)};
"3x"; "T3" **\crv{(-15,15) & (5,10)}
\POS?(.38)*{\hole}="4y" \POS?(.65)*{\hole}="4z";
"T1";"4y" **\crv{(-14,16)};
"T2";"4z" **\crv{(-5,16)};
"2y";"4z" **\crv{(-10,3) & (10,2)} \POS?(.6)*{\hole}="5z";
"4y";"5z" **\crv{(-5,5)};
"5z";"2z" **\crv{(5,4)};
\endxy }="TTL";
(-75,60)*+{
\xy 
(-14,20)*{}="T1";
(-4,20)*{}="T2";
(4,20)*{}="T3";
(15,20)*{}="T4";
(-15,-20)*{}="B1";
(-5,-20)*{}="B2";
(5,-20)*{}="B3";
(15,-20)*{}="B4";
"B1";"T4" **\crv{(-15,5) & (15,-5)}
\POS?(.17)*{\hole}="2x" \POS?(.49)*{\hole}="2y" \POS?(.65)*{ \hole}="2z";
"2x";"T3" **\crv{(-20,10) & (5,10) }
\POS?(.45)*{\hole}="3y" \POS?(.7)*{\hole}="3z";
"2x";"B2" **\crv{(-6,-16)};
"T1";"3y" **\crv{(-16,17)};
"T2";"3z" **\crv{(-5,17)};
"3z";"2z" **\crv{};
"3y";"2y" **\crv{};
"B3";"2z" **\crv{ (5,-5) &(20,-10)}
\POS?(.41)*{\hole}="4z";
"2y";"4z" **\crv{(6,-8)};
"4z";"B4" **\crv{(15,-15)};
\endxy}="TLL";
(-90,10)*+{
\xy 
(-14,20)*{}="T1";
(-4,20)*{}="T2";
(4,20)*{}="T3";
(15,20)*{}="T4";
(-15,-20)*{}="B1";
(-5,-20)*{}="B2";
(5,-20)*{}="B3";
(15,-20)*{}="B4";
"B1";"T4" **\crv{(-15,5) & (15,-5)}
\POS?(.25)*{\hole}="2x" \POS?(.49)*{\hole}="2y" \POS?(.65)*{ \hole}="2z";
"2x";"T3" **\crv{(-20,10) & (5,10) }
\POS?(.45)*{\hole}="3y" \POS?(.7)*{\hole}="3z";
"2x";"B2" **\crv{(-5,-14)};
"T1";"3y" **\crv{(-16,17)};
"T2";"3z" **\crv{(-5,17)};
"3z";"2z" **\crv{};
"3y";"2y" **\crv{};
"B3";"2z" **\crv{ (5,-5) &(20,-10)}
\POS?(.4)*{\hole}="4z";
"2y";"4z" **\crv{(6,-8)};
"4z";"B4" **\crv{(15,-15)};
\endxy }="ML";
(-85,-45)*+{
\xy 
(-14,20)*{}="T1";
(-4,20)*{}="T2";
(4,20)*{}="T3";
(15,20)*{}="T4";
(-15,-20)*{}="B1";
(-5,-20)*{}="B2";
(5,-20)*{}="B3";
(15,-20)*{}="B4";
"B1";"T4" **\crv{(-15,5) & (15,-5)}
\POS?(.25)*{\hole}="2x" \POS?(.49)*{\hole}="2y" \POS?(.8)*{ \hole}="2z";
"2x";"T3" **\crv{(-20,10) & (5,10) }
\POS?(.38)*{\hole}="3y" \POS?(.72)*{\hole}="3z";
"2x";"B2" **\crv{(-5,-14)};
"T1";"3y" **\crv{(-16,13)};
"T2";"3z" **\crv{(-5,17)};
"3z";"2z" **\crv{(5,14)};
"3y";"2y" **\crv{};
"B3";"2z" **\crv{ (5,-5) &(18,-10)}
\POS?(.34)*{\hole}="4z";
"2y";"4z" **\crv{(6,-8)};
"4z";"B4" **\crv{(15,-15)};
\endxy}="BLL";
(-60,-90)*+{
\xy 
(-14,20)*{}="T1";
(-4,20)*{}="T2";
(4,20)*{}="T3";
(15,20)*{}="T4";
(-15,-20)*{}="B1";
(-5,-20)*{}="B2";
(5,-20)*{}="B3";
(15,-20)*{}="B4";
"B1";"T4" **\crv{(-15,-5) & (15,5)}
\POS?(.38)*{\hole}="2x" \POS?(.53)*{\hole}="2y" \POS?(.7)*{\hole}="2z";
"T1";"2x" **\crv{(-15,5)};
"2y";"B2" **\crv{(10,-10) & (-6,-10)} \POS?(.45)*{\hole}="4x";
"2z";"B3" **\crv{ (15,0)&(15,-10) & (6,-16)} \POS?(.7)*{\hole}="5x";
"T3";"2y" **\crv{(5,10)& (-6,18) }
\POS?(.5)*{\hole}="3x";
"T2";"3x" **\crv{(-5,15)};
"3x";"2z" **\crv{(7,11)};
"2x";"4x" **\crv{(-3,-7)};
"4x";"5x" **\crv{};
"5x";"B4" **\crv{(15,-15)};
\endxy }="BBLL";
(-40,-140)*+{
\xy 
(15,20)*{}="T1";
(5,20)*{}="T2";
(-5,20)*{}="T3";
(-15,20)*{}="T4";
(15,-20)*{}="B1";
(5,-20)*{}="B2";
(-5,-20)*{}="B3";
(-15,-20)*{}="B4";
"T1"; "B4" **\crv{(15,7) & (-15,5)}
\POS?(.25)*{\hole}="2x" \POS?(.45)*{\hole}="2y" \POS?(.67)*{\hole}="2z";
"T2";"2x" **\crv{(4,12)};
"T3";"2y" **\crv{(-5,10)};
"T4";"2z" **\crv{(-16,9)};
(15,5)*{}="3x";
"2x"; "3x" **\crv{(18,10)};
"3x"; "B3" **\crv{(13,0) & (-4,-10)}
\POS?(.25)*{\hole}="4x" \POS?(.56)*{\hole}="4y";
"2y"; "4x" **\crv{};
"2z"; "4y" **\crv{};
(15,-10)*{}="5x";
"4x";"5x" **\crv{(17,-6)};
"5x";"B2" **\crv{(14,-12)}
\POS?(.6)*{\hole}="6x";
"6x";"B1" **\crv{};
"4y";"6x" **\crv{};
\endxy }="BBBL";
(40,95)*+{
\xy 
(-15,-20)*{}="T1";
(-5,-20)*{}="T2";
(5,-20)*{}="T3";
(15,-20)*{}="T4";
(-14,20)*{}="B1";
(-5,20)*{}="B2";
(5,20)*{}="B3";
(15,20)*{}="B4";
"T1"; "B4" **\crv{(-15,-7) & (15,-5)}
\POS?(.25)*{\hole}="2x" \POS?(.47)*{\hole}="2y" \POS?(.69)*{
\hole}="2z"; 
"T2";"2x" **\crv{(-4,-12)};
"T3";"2y" **\crv{(5,-10)};
"T4";"2z" **\crv{(16,-9)};
(-15,-8)*{}="3x"; 
"2x"; "3x" **\crv{(-16,-12)}; 
"3x"; "B3" **\crv{(-13,0) & (4,10)}
\POS?(.3)*{\hole}="4x" \POS?(.59)*{\hole}="4y"; 
"2y"; "4x" **\crv{};
"2z"; "4y" **\crv{(7,4)}; 
(-15,10)*{}="5x";
"4x";"5x" **\crv{(-17,6)};
"5x";"B2" **\crv{(-14,12)}
\POS?(.6)*{\hole}="6x";
"6x";"B1" **\crv{(-14,18)};
"4y";"6x" **\crv{(-8,10)};
\endxy
}="TTR";
(75,60)*+{
\xy 
(14,-20)*{}="T1";
(4,-20)*{}="T2";
(-4,-20)*{}="T3";
(-15,-20)*{}="T4";
(15,20)*{}="B1";
(5,20)*{}="B2";
(-5,20)*{}="B3";
(-15,20)*{}="B4";
"B1";"T4" **\crv{(15,5) & (-15,-5)}
\POS?(.38)*{\hole}="2x" \POS?(.53)*{\hole}="2y" \POS?(.7)*{\hole}="2z";
"T1";"2x" **\crv{(15,-5)};
"2y";"B2" **\crv{(-10,10) & (6,10)}
\POS?(.45)*{\hole}="4x";
"2z";"B3" **\crv{ (-15,0)&(-15,10) & (-6,16)}
\POS?(.7)*{\hole}="5x";
"T3";"2y" **\crv{(-5,-10)& (6,-18) }
\POS?(.5)*{\hole}="3x";
"T2";"3x" **\crv{(5,-15)};
"3x";"2z" **\crv{(-7,-11)};
"2x";"4x" **\crv{(3,7)};
"4x";"5x" **\crv{};
"5x";"B4" **\crv{(-15,15)};
\endxy }="TRR";
(90,10)*+{
\xy 
(14,-20)*{}="T1";
(4,-20)*{}="T2";
(-4,-20)*{}="T3";
(-15,-20)*{}="T4";
(15,20)*{}="B1";
(5,20)*{}="B2";
(-5,20)*{}="B3";
(-15,20)*{}="B4";
"B1";"T4" **\crv{(13,-3) & (-15,5)}
\POS?(.26)*{\hole}="2x" \POS?(.48)*{\hole}="2y" \POS?(.78)*{ \hole}="2z";
"2x";"T3" **\crv{(23,-10) & (-5,-10) }
\POS?(.37)*{\hole}="3y" \POS?(.7)*{\hole}="3z";
"2x";"B2" **\crv{(5,14)};
"T1";"3y" **\crv{(19,-14)};
"T2";"3z" **\crv{(5,-17)};
"3z";"2z" **\crv{};
"3y";"2y" **\crv{};
"B3";"2z" **\crv{ (-5,5) &(-20,3)}
\POS?(.31)*{\hole}="4z";
"2y";"4z" **\crv{(-5,4)};
"4z";"B4" **\crv{(-12,11)};
\endxy
}="MR";
(85,-45)*+{
\xy 
(14,-20)*{}="T1";
(4,-20)*{}="T2";
(-4,-20)*{}="T3";
(-15,-20)*{}="T4";
(15,20)*{}="B1";
(5,20)*{}="B2";
(-5,20)*{}="B3";
(-15,20)*{}="B4";
"B1";"T4" **\crv{(15,-5) & (-15,5)}
\POS?(.25)*{\hole}="2x" \POS?(.49)*{\hole}="2y" \POS?(.65)*{ \hole}="2z";
"2x";"T3" **\crv{(20,-10) & (-5,-10) }
\POS?(.45)*{\hole}="3y" \POS?(.7)*{\hole}="3z";
"2x";"B2" **\crv{(5,14)};
"T1";"3y" **\crv{(16,-17)};
"T2";"3z" **\crv{(5,-17)};
"3z";"2z" **\crv{};
"3y";"2y" **\crv{};
"B3";"2z" **\crv{ (-5,5) &(-20,10)}
\POS?(.4)*{\hole}="4z";
"2y";"4z" **\crv{(-6,8)};
"4z";"B4" **\crv{(-15,15)};
\endxy
}="BRR";
(60,-90)*+{
\xy 
(14,-20)*{}="T1";
(4,-20)*{}="T2";
(-4,-20)*{}="T3";
(-15,-20)*{}="T4";
(15,20)*{}="B1";
(5,20)*{}="B2";
(-5,20)*{}="B3";
(-15,20)*{}="B4";
"B1";"T4" **\crv{(13,-3) & (-15,5)}
\POS?(.18)*{\hole}="2x" \POS?(.49)*{\hole}="2y" \POS?(.69)*{ \hole}="2z";
"2x";"T3" **\crv{(23,-10) & (-5,-10) }
\POS?(.45)*{\hole}="3y" \POS?(.7)*{\hole}="3z";
"2x";"B2" **\crv{(5,14)};
"T1";"3y" **\crv{(16,-17)};
"T2";"3z" **\crv{(5,-17)};
"3z";"2z" **\crv{};
"3y";"2y" **\crv{};
"B3";"2z" **\crv{ (-7,5) &(-20,7)}
\POS?(.51)*{\hole}="4z";
"2y";"4z" **\crv{(-6,4)};
"4z";"B4" **\crv{(-17,12)};
\endxy}="BBRR";
(40,-140)*+{
\xy 
(15,20)*{}="b1";
(5,20)*{}="b2";
(-5,20)*{}="b3";
(-14,20)*{}="b4";
(14,-20)*{}="T1";
(5,-20)*{}="T2";
(-5,-20)*{}="T3";
(-15,-20)*{}="T4";
"b1"; "T4" **\crv{(15,7) & (-15,5)}
\POS?(.25)*{\hole}="2x" \POS?(.47)*{\hole}="2y" \POS?(.65)*{ \hole}="2z";
"b2";"2x" **\crv{(5,15)};
"b3";"2y" **\crv{(-5,10)};
"b4";"2z" **\crv{(-14,9)};
(15,5)*{}="3x";
"2x"; "3x" **\crv{(15,10)};
"3x"; "T3" **\crv{(15,-15) & (-5,-10)}
\POS?(.38)*{\hole}="4y" \POS?(.65)*{\hole}="4z";
"T1";"4y" **\crv{(14,-16)};
"T2";"4z" **\crv{(5,-16)};
"2y";"4z" **\crv{(10,-3) & (-10,-2)} \POS?(.6)*{\hole}="5z";
"4y";"5z" **\crv{(5,-5)};
"5z";"2z" **\crv{(-5,-4)};
\endxy }="BBBR";
(0,-165)*+{
\xy 
(15,20)*{}="T1";
(5,20)*{}="T2";
(-5,20)*{}="T3";
(-15,20)*{}="T4";
(15,-20)*{}="B1";
(5,-20)*{}="B2";
(-5,-20)*{}="B3";
(-15,-20)*{}="B4";
"T1"; "B4" **\crv{(15,7) & (-15,5)}
\POS?(.25)*{\hole}="2x" \POS?(.45)*{\hole}="2y" \POS?(.6)*{\hole}="2z";
"T2";"2x" **\crv{(4,12)};
"T3";"2y" **\crv{(-5,10)};
"T4";"2z" **\crv{(-16,9)};
(15,5)*{}="3x";
"2x"; "3x" **\crv{(18,10)};
"3x"; "B3" **\crv{(13,0) & (-4,-10)}
\POS?(.31)*{\hole}="4x" \POS?(.53)*{\hole}="4y";
"2y"; "4x" **\crv{};
"2z"; "4y" **\crv{};
(15,-10)*{}="5x";
"4x";"5x" **\crv{(17,-6)};
"5x";"B2" **\crv{(14,-12)}
\POS?(.6)*{\hole}="6x";
"6x";"B1" **\crv{};
"4y";"6x" **\crv{};
\endxy }="B";
(-20,130)*{}="X2";
(-35,120)*{}="X1"; 
{\ar@{=>}^{R_{ij}R_{ik}R_{il}Y_{jkl}\quad } "X1";"X2"}; 
(20,130)*{}="X2";
(35,120)*{}="X1"; 
{\ar@{=>}_{\quad R_{ij}R_{ik}C_{il}^{jk}R_{jl}R_{kl}} "X1";"X2"}; 
(-60,100)*{}="X2"; 
(-73,85)*{}="X1"; 
{\ar@{=>}^{R_{ij}Y_{ikl}R_{jl}R_{jk}} "X1";"X2"}; 
(60,100)*{}="X2"; 
(73,85)*{}="X1"; 
{\ar@{=>}_{Y_{ijk}R_{il}R_{jl}R_{kl}} "X1";"X2"}; 
(-92,48)*{}="X2"; 
(-98,34)*{}="X1"; 
{\ar@{=>} "X1";"X2"}; 
(-114,43)*{\scriptstyle{C_{ij}^{kl}R_{il}R_{ik}}};
(-112,35)*{\scriptstyle{R_{jl}R_{jk}}};
(92,48)*{}="X2"; 
(98,34)*{}="X1"; 
{\ar@{=>} "X1";"X2"}; 
(114,43)*{\scriptstyle{R_{jk}R_{ik}}};
(114,35)*{\scriptstyle{Y_{ijl}R_{kl}}};
(-95,-13)*{}="X2"; 
(-93,-23)*{}="X1"; 
{\ar@{=>} "X1";"X2"}; 
(-114,-16)*{\scriptstyle{R_{kl}R_{ij}R_{il}}};
(-114,-23)*{\scriptstyle{C_{ik}^{jl}R_{jk}}};
(95,-13)*{}="X2"; 
(93,-23)*{}="X1"; 
{\ar@{=>} "X1";"X2"};
(114,-15)*{\scriptstyle{R_{jk}C_{ik}^{jl}}};
(114,-23)*{\scriptstyle{R_{il}R_{ij}R_{kl}}};
(-85,-72)*{}="X2";
(-78,-87)*{}="X1";
{\ar@{=>} "X1";"X2"}; 
(-100,-77)*{\scriptstyle{R_{kl}Y_{ijl}}};
(-100,-84)*{\scriptstyle{R_{ik}R_{jk}}};
(85,-72)*{}="X2";
(78,-87)*{}="X1";
{\ar@{=>} "X1";"X2"};
(110,-77)*{\scriptstyle{R_{jk}R_{jl}R_{ik}}};
(110,-84)*{\scriptstyle{R_{il}C_{ij}^{kl}}};
(-68,-116)*{}="X2";
(-58,-133)*{}="X1";
{\ar@{=>} "X1";"X2"}; 
(-85,-122)*{\scriptstyle{R_{kl}R_{jl}}};
(-85,-130)*{\scriptstyle{R_{il}Y_{ijk}}};
(68,-116)*{}="X2";
(58,-133)*{}="X1";
{\ar@{=>} "X1";"X2"};
(85,-122)*{\scriptstyle{R_{jk}R_{jl} }};
(85,-130)*{\scriptstyle{Y_{ikl}R_{ij}}};
(-35,-164)*{}="X2";
(-18,-174)*{}="X1";
{\ar@{=>}^{R_{kl}R_{jl} C_{il}^{jk} R_{ik}R_{ij} \quad} "X1";"X2"};
(35,-164)*{}="X2";
(18,-174)*{}="X1";
{\ar@{=>}_{\quad \ Y_{jkl} R_{il}R_{ik} R_{ij}} "X1";"X2"};
\endxy \]

The notation will be clarified in Subsection \ref{SectionKoszulTerminol}.

In the second part of this paper, we will find a basis for the quadratic dual algebra $\pvq$, and in particular for $\pvqiii$.  We will then check `by hand' that the corresponding degree 3 syzygies of $A$ are also satisfied by $K$.  These consist primarily of syzygies which correspond to the `Zamolodchikov' syzygy alluded to above (this is explained in Subsection \ref{sectionGlobSyz}).  This will allow us to conclude that $\grPV \cong \pv$.

\section{Postponed Proofs}

\subsection{Eliminating Linear Relations}
\label{EliminatingLinearRelations}

We mentioned in Subsection \ref{AlgSetting} that we work with the completions $\hat{K}$ of $K$ (and $\hat{F}$ of $F$) because, by picking a suitable set of generators for $K$ (and $F$) and passing to the completions, we may arrange that $M\subseteq \IF^2$.

To prove this claim, we let $\{\xp: p\in P\}$ be a set of generators for the algebra $K$, so that $\{\bxp:=(\xp-1): p\in P\}$ is a set of generators for $\IK$ as a left- or right-sided ideal in $K$.  The images of the $\{\bxp: p\in P\}$ in the vector space $\IKh$ generate that space, so the images of some subset $\{\bxp: p\in S \subseteq P\}$ form a basis.  Thus the $\{\bxp: p\in P - S\}$ may be expressed as linear combinations of the $\{\bxp: p\in S\}$ modulo elements of $\IK^2$.  More generally, we may replace any polynomial involving the $\{\bxp: p\in P - S\}$ by a polynomial involving only $\{\bxp: p\in S\}$, modulo elements in higher powers of $\IK$.  It therefore follows that the $\{\bxp: p\in S\}$ generate the completion, and we may drop the $\{\bxp: p\in P - S\}$ from our list of generators.

We note in particular that in the case where $K$ is the group algebra of some group $G$, the generators of $K$ as an algebra would normally include, not only the group generators, but also their inverses.  Moreover, the relations ideal $M$ would include relations derived from the group laws for the generators (recall that $F$ is the free algebra on $X$, not the free group algebra on $X$).  Thus if $a$ is a generator of the group, and $b$ its inverse, we have the group law $ab=1$ which gives, under the substitution $a\mapsto \bar{a}+1$, $b\mapsto \bar{b}+1$, where $\bar{a}:=(a-1)$ and $\bar{b}:=(b-1)$, the relation $\bar{a}+\bar{b}+\bar{a}\bar{b}=0$, which is not in $\IF^2$.  However using the relation $\bar{b}=-\bar{a}-\bar{a}\bar{b}$ we can replace all occurrences of $\bar{b}$ by $-\bar{a}$, provided we are working in the completion of $K$.  So in the case of group algebras we will take as generators only the group generators and we omit the group law relations from $M$.

Coming back to the case of a general $K$, note that $\IFh$ and $\IKh$ are now vector spaces with bases having the same number of elements, and hence are isomorphic.  We can use this observation to see that $M\subseteq \IF^2$.

\newpage

Indeed, it is also clear that:

\begin{align*}
\IKh &= \frac{\IF/M}{\IF^2 / (M \cap \IF^2)} \\
&= \frac{\IF/M}{(\IF^2 + M) /M} \\
&= \frac{\IF}{\IF^2 + M} \\
&= \frac{\IFh}{(\IF^2 + M)/\IF^2} \\
&= \frac{\IFh}{M/(M\cap \IF^2)}
\end{align*}
so we must have $M/(M\cap \IF^2)=0$, i.e. $M\subseteq \IF^2$.

\subsection{Proof of Proposition \ref{IStability} and Theorem \ref{HutchingsCriterion}}
\label{PVHCriterion}

Recall that we have the following exact, commutative diagram:

\[
\xy
(-30,0)*+{0}="1";
(-15,0)*+{\RK_{\geq p+1}}="2";
(0,0)*+{\RKgp}="3";
(15,0)*+{\RKp}="4";
(30,0)*+{0}="5";
(-30,-15)*+{0}="6";
(-15,-15)*+{\IF^{p+1}}="7";
(0,-15)*+{\IF^p}="8";
(15,-15)*+{\Xp}="9";
(30,-15)*+{0}="10";
{\ar "1";"2"}; {\ar "2";"3"};
{\ar^{\pi^1_p} "3";"4"}; {\ar "4";"5"};
{\ar "6";"7"}; {\ar "7";"8"};
{\ar^{\pi^0_p} "8";"9"}; {\ar "9";"10"};
{\ar^{\delK^{res}} "2";"7"}; {\ar^{\delK} "3";"8"};
{\ar^{\delA} "4";"9"}
\endxy
\]
where $\delK^{res}$ denotes the restriction.

We extend the above diagram by adding a row of kernels at the top, and a row of cokernels at the bottom:

\[
\xy
(-20,15)*+{ker\ \delK^{res}}="11";
(0,15)*+{ker\ \delK}="12";
(20,15)*+{ker\ \delA}="13";
(-40,0)*+{0}="1";
(-20,0)*+{\RK_{\geq p+1}}="2";
(0,0)*+{\RKgp}="3";
(20,0)*+{\RKp}="4";
(40,0)*+{0}="5";
(-40,-15)*+{0}="6";
(-20,-15)*+{\IF^{p+1}}="7";
(0,-15)*+{\IF^p}="8";
(20,-15)*+{\Xp}="9";
(40,-15)*+{0}="10";
(-20,-30)*+{\frac{\IF^{p+1}}{\delK^{res} \RK_{\geq p+1}}}="14";
(0,-30)*+{\frac{\IF^p}{\delK \RKgp}}="15";
(20,-30)*+{A^p}="16";
{\ar "1";"2"}; {\ar "2";"3"};
{\ar^{\pi^1_p} "3";"4"}; {\ar "4";"5"};
{\ar "6";"7"}; {\ar "7";"8"};
{\ar^{\pi^0_p} "8";"9"}; {\ar "9";"10"};
{\ar^{\delK} "2";"7"}; {\ar^{\delK} "3";"8"};
{\ar^{\delA} "4";"9"};
{\ar "11";"2"}; {\ar "12";"3"};
{\ar "13";"4"};
{\ar "7";"14"}; {\ar "8";"15"};
{\ar "9";"16"};
{\ar "11";"12"}; {\ar^{\FSyz_p} "12";"13"};
{\ar^{\nu_p} "14";"15"}; {\ar "15";"16"}
\endxy
\]
where we have abbreviated $\delK |_{\RKgp}$ as $\delK$, and $\delA |_{\RKp}$ as $\delA$.

\newpage

By the Snake Lemma, the following sequence is exact:

\begin{lemma}
\label{HexLemma2}
\begin{multline}
ker\ \delK \xrightarrow{\FSyz_p} ker\ \delA
\rightarrow \frac{\IF^{p+1}}{\delK \RK_{\geq p+1}} \xrightarrow{\nu_p} \frac{\IF^p}{\delK \RKgp} \longrightarrow A^p \longrightarrow  0 \label{HutchingsLES}
\end{multline}
\end{lemma}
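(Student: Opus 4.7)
The plan is to recognize the stated lemma as an immediate application of the Snake Lemma to the displayed commutative diagram; the only work lies in checking the hypotheses and then identifying the resulting cokernels with the spaces appearing in (\ref{HutchingsLES}).

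First I would verify that the two rows are short exact sequences. For the top row, the grading $\RF = \bigoplus_{q\geq 0}\Rg_q$ induces a decomposition $\RKgp = \RK_{\geq p+1} \oplus \RKp$, with $\pi^1_p$ the projection onto the degree-$p$ summand, giving exactness. For the bottom row, the grading $F = T\tX$ yields $\IF^p = \tX^p \oplus \IF^{p+1}$, with $\pi^0_p$ again the projection onto the degree-$p$ summand.

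Next I would verify that the vertical maps form a morphism of these short exact sequences. The right square commutes by the very definition $\delA = \pi^0_p \circ \delK$ on $\RKp$ (see the construction preceding (\ref{DefOfA})). The left square commutes because $\delK$ is filtration-preserving — by construction $\delK(\RK_{\geq p+1}) \subseteq \IF^{p+1}$ — so $\delK^{res}$ is literally the restriction of $\delK$, hence compatible with the inclusions on the left-hand columns.

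With the hypotheses in place, the Snake Lemma produces the exact sequence
\begin{equation*}
\ker\ \delK^{res} \to \ker\ \delK \xrightarrow{\FSyz_p} \ker\ \delA \xrightarrow{\partial} \operatorname{coker}\ \delK^{res} \to \operatorname{coker}\ \delK \to \operatorname{coker}\ \delA \to 0,
\end{equation*}
with the rightmost $0$ coming from surjectivity of $\pi^0_p$. Identifying $\operatorname{coker}\ \delK^{res} = \IF^{p+1}/\delK\RK_{\geq p+1}$ (since $\delK^{res}$ is the restriction of $\delK$), $\operatorname{coker}\ \delK = \IF^p/\delK\RKgp$, and $\operatorname{coker}\ \delA = \tX^p/\delA\RKp = A^p$ (the last by (\ref{DefOfA})), and truncating the leftmost term, yields precisely (\ref{HutchingsLES}), with $\nu_p$ the natural map on cokernels induced by $\IF^{p+1} \hookrightarrow \IF^p$. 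No real obstacle arises beyond confirming that the Snake-Lemma map $\ker\ \delK \to \ker\ \delA$ coincides with the $\FSyz_p$ defined earlier, which is immediate since both are simply the restriction of $\pi^1_p$ to kernels.
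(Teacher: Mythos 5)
Your proposal is correct and follows the same route as the paper, which likewise obtains (\ref{HutchingsLES}) by applying the Snake Lemma to the displayed commutative diagram with exact rows (kernels added on top, cokernels below) and identifying $\operatorname{coker}\ \delA$ with $A^p$. The extra verifications you supply (exactness of the rows via the gradings, filtration-preservation of $\delK$, identification of $\FSyz_p$) are exactly the facts the paper takes as already established from its construction of the diagram.
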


Also, as is clear from the long exact sequence, we have:

\begin{lemma}
For every $p\geq 2$, $\FSyz_p$ is surjective if and only if $\nu_p: \frac{\IF^{p+1}}{\delK \RK_{\geq p+1}} \rightarrow \frac{\IF^p}{\delK \RKgp}$ is injective.
\end{lemma}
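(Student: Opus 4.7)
The plan is to derive this lemma as an immediate diagram chase from the long exact sequence in Lemma~\ref{HexLemma2} (the Snake Lemma sequence), without touching the underlying definitions of $\FSyz_p$ or $\nu_p$.

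First I would label the connecting map $\delta: ker\ \delA \to \IF^{p+1}/\delK \RK_{\geq p+1}$ appearing between $\FSyz_p$ and $\nu_p$ in the sequence
\[
ker\ \delK \xrightarrow{\FSyz_p} ker\ \delA \xrightarrow{\delta} \frac{\IF^{p+1}}{\delK \RK_{\geq p+1}} \xrightarrow{\nu_p} \frac{\IF^p}{\delK \RKgp} \longrightarrow A^p \longrightarrow 0.
\]
By exactness at $ker\ \delA$, the image of $\FSyz_p$ equals $\ker \delta$, so $\FSyz_p$ is surjective if and only if $\ker \delta = ker\ \delA$, i.e.\ if and only if $\delta$ is the zero map. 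Then by exactness at $\IF^{p+1}/\delK \RK_{\geq p+1}$, we have $\mathrm{im}\,\delta = \ker \nu_p$, so $\delta = 0$ if and only if $\ker \nu_p = 0$, i.e.\ if and only if $\nu_p$ is injective. Stringing these equivalences together gives the lemma.

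There is essentially no obstacle: the content is entirely packaged in Lemma~\ref{HexLemma2}, and the proof is a two-step exactness argument. The only thing worth being careful about is to note that the statement makes sense for all $p \geq 2$, and that the exact sequence of Lemma~\ref{HexLemma2} is valid in that range (which is where $\delA$ is actually defined on a nontrivial piece of $\RF$).
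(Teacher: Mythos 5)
Your argument is correct and is exactly the diagram chase the paper has in mind: the paper simply states the lemma "as is clear from the long exact sequence" of Lemma~\ref{HexLemma2}, and your two-step exactness argument (surjectivity of $\FSyz_p$ $\iff$ the connecting map vanishes $\iff$ $\ker\nu_p=0$) is the standard way to make that explicit. No differences of substance from the paper's approach.
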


\begin{proof}[Proof of Proposition \ref{IStability}]

It is clear that the $\nu_p: \frac{\IF^p}{\delK \RKgp} \rightarrow \frac{\IF^{p-1}}{\delK \RK_{\geq p-1}}$ are injective for all $p\geq 2$ if and only if the compositions $\frac{\IF^p}{\delK \RKgp} \rightarrow \nu_p(\frac{\IF^p}{\delK \RK_{\geq p}}) \rightarrow \dots \rightarrow \IK^p$ are also injective for all $p\geq 2$.  Since these compositions are always surjective, injectivity is equivalent to isomorphism.  This proves Proposition \ref{IStability}.
\end{proof}

\begin{proof}[Proof of Theorem \ref{HutchingsCriterion}]
The first claim in Theorem \ref{HutchingsCriterion} follows from the long exact sequence (\ref{HutchingsLES}) and Proposition \ref{IStability}, which imply that

\begin{equation*}
A^p \cong \frac{\IF^p}{\delK \RKgp} / \frac{\IF^{p+1}}{\delK \RK_{\geq p+1}} \cong \IK^p/\IK^{p+1}
\end{equation*}
whenever $\FSyz_p$ is surjective.

We deal with the restriction to degrees 2 and 3 for the Koszul case in the next subsection.
\end{proof}

\subsection{Quadratic Duality and the Role of Koszulness}

In this subsection we briefly review the theory of quadratic algebras to the extent needed to prove the final claim in Theorem \ref{HutchingsCriterion}, and to cover material that will be needed later (but skipping proofs).  The reader who is not familiar with this theory can find a quick overview in \cite{Froberg} or \cite{Hille}, or more extensive treatment in \cite{Pol} and \cite{Kraehmer}; the original source is \cite{Priddy}.\footnote{As noted in footnote \ref{fnOnFiniteness}, we rely on results about Koszul algebras which have only been developed for graded algebras whose graded components are finitely generated over the ground ring.  Hence, wherever we rely on Koszulness of $A$, we assume the algebra $K$ to be finitely generated.  This is sufficient to ensure that $A^m$ is finitely generated over $\Q$.}

We start with the quadratic algebra $A$ which is given by $A= T V / \langle \Rq \rangle$ (in the notation of Subsection \ref{TwoCanonAssGradedAlg}).  The quadratic dual algebra $A^!$ is defined as $A^!:= TV^* / \langle \Rqp \rangle$, where $V^*$ is the linear dual vector space and $\Rqp \subseteq V^*\otimes V^*$ is the annihilator of $\Rq$.

One indication of the usefulness of the concept of quadratic duality is that the degree 2 part of the dual algebra catalogues the relations of the original algebra (this is true for all quadratic algebras).  More generally, the Koszul complex provides a readily computable `candidate' resolution for $A$, which is an actual resolution precisely when $A$ is Koszul.   In particular the degree 3 part of the dual provides at least a candidate catalogue of the relations among the relations of the original algebra (i.e. syzygies) - and more generally, the degree $m$ part of the dual provides a candidate catalogue of the relations among relations among ... (($m-1)$ times) of the original algebra (which we will call the level $m$ syzygies).  Moreover, there are specific maps from the degree $m$ part of the dual into the space of level $m$ syzygies.  The statement that a quadratic algebra is Koszul is equivalent to the statement that the dual algebra not only provides a candidate catalogue of the syzygies of all levels, but an actual, complete catalogue of those syzygies.  For purposes of this paper, it is only the level 3 syzygies that are important.

More specifically, if we define $\Diti: A^{!2*} \rightarrow V\otimes V$ as the dual to multiplication $V^* \otimes V^* \rightarrow A^{!2}$, then in fact $\Diti$ is an isomorphism:

\begin{align}
\Diti: \ \ & A^{!2*} \overset{\sim}{\rightarrow} \ \Rq \label{RKq2Iso}
\end{align}
Thus $A^{!2*}$ catalogues the degree 2 relations of $A$ and the map $\Diti$ sends a basis of $A^{!2*}$ to a basis of $\Rq$ (see (\ref{yijk}) and (\ref{cijkl}) below, in the case of $\pvq$).

In the same vein, $A^{!3*}$ catalogues all relations between relations of $A$, in degree three\footnote{If we assume that $\delA: \Q\YF \rightarrow R$ is injective (and hence the PVH Criterion is satisfied in degree 2) then level 3 syzygies must have at least degree 3 in the generators of $A$.  Given a level 3, degree 3 syzygy, we can also get level 3 syzygies of higher degree by pre- or post-multiplying all terms in the syzygy by monomials in the generators, although level 3 syzygies of higher degree need not all arise in this way (except when the algebra is Koszul).} - in other words, $A^{!3*} \cong (\Rq  \otimes  V \cap V  \otimes  \Rq)$ (see \cite{Pol}, proof of Theorem 4.4.1).  More specifically, if $\Diiti$ is dual to the multiplication: $A^{!2} \otimes V^* \twoheadrightarrow A^{!3}$, then the map

\begin{equation}
(\Diti \negthinspace \otimes 1) \circ \Diiti: \ \ A^{!3*} \hookrightarrow \ \Rq \negthinspace  \otimes \negthinspace  V  \label{X31}
\end{equation}
is actually an isomorphism $A^{!3*} \overset{\sim}{\rightarrow} (\Rq \negthinspace  \otimes \negthinspace  V \cap V \negthinspace  \otimes \negthinspace  \Rq)$.

Similarly, if $\Ditii$ is dual to the multiplication: $V^* \otimes A^{!2} \rightarrow A^{!3}$, the map:

\begin{equation}
(1 \negthinspace  \otimes \Diti) \circ \Ditii: \ \ A^{!3*} \hookrightarrow \ V \negthinspace  \otimes \negthinspace  R \label{X32}
\end{equation}
is an isomorphism $A^{!3*} \overset{\sim}{\rightarrow} (\Rq \otimes  V \cap V \otimes \Rq)$.  By (co-)associativity, $(\Diti \negthinspace \otimes 1) \circ \Diiti = (1 \negthinspace  \otimes \Diti) \circ \Ditii$ are the same map, but it will be useful to keep both notations on hand in the sequel.

The following theorem, which follows from \cite{Pol}, Theorem 2.4.1 (p.29) and Proposition 1.7.2 (p.16), to which the reader is referred for proofs, explains precisely the role Koszulness in determining the syzygies of $A$.  We adopt the notation $\xmi := V^{\otimes i} \otimes R \otimes V^{\otimes m-i-2}$.

\begin{theorem}
Koszulness\footnote{As per footnote \ref{OnlyNeedDeg3}, $A$ need only be 2-Koszul, i.e. its Koszul complex need only be exact up to homological degree 2 inclusive.} of the algebra $A$ implies exactness of the sequence:
\begin{equation}
\bigoplus_{i<j} (\xmi \cap \xmj) \overset{\delSyz}{\longrightarrow} \bigoplus_i \xmi \overset{\kappa}{\longrightarrow} V ^{\otimes m}
\label{DResolution}
\end{equation}
\end{theorem}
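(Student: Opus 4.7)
The plan is to identify the displayed sequence as a small piece of the Koszul complex of $A$, and to derive its exactness from the Koszul hypothesis via the standard distributivity characterisation of Koszulness.

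First I would make the two maps explicit. The map $\kappa$ is the sum of the inclusions $\xmi \hookrightarrow V^{\otimes m}$, and $\delSyz$ sends $z \in \xmi \cap \xmj$ (for $i < j$) to the element of $\bigoplus_i \xmi$ carrying $+z$ in the $i$-th summand, $-z$ in the $j$-th summand, and zero elsewhere. Under these definitions $\kappa \circ \delSyz = 0$ is tautological, so the substance of the theorem is the reverse inclusion $\ker\kappa \subseteq \operatorname{im}\delSyz$.

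Next I would invoke the Backelin-type characterisation of Koszulness recorded in \cite{Pol}: the quadratic algebra $A = TV/\langle \Rq \rangle$ is Koszul if and only if, for every $m \geq 2$, the family $\{\xmi\}_i$ of subspaces of $V^{\otimes m}$ is distributive, i.e.\ generates a distributive sublattice of the lattice of subspaces of $V^{\otimes m}$ under $\cap$ and $+$. The strengthening in the theorem's footnote, to the effect that 2-Koszulness is enough, corresponds to the fact that distributivity of this single collection already suffices to extract exactness of a 3-term sequence. I would then apply the purely lattice-theoretic fact that, for any distributive family $\{U_i\}$ of subspaces of a vector space $W$, the analogous sequence $\bigoplus_{i<j}(U_i \cap U_j) \to \bigoplus_i U_i \to W$ is exact at the middle term: any relation $\sum_i z_i = 0$ with $z_i \in U_i$ can, by repeated application of distributivity, be telescoped into a sum of elementary syzygies, each of which merely transports a single element through some pairwise intersection $U_i \cap U_j$. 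Specialising to $U_i = \xmi$ and $W = V^{\otimes m}$ yields the theorem.

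The main obstacle is not this concluding lattice argument, which is essentially formal once distributivity is in hand, but rather the equivalence between the homological condition (exactness of the Koszul complex in homological degrees $\leq 2$) and the combinatorial condition (distributivity of the $\{\xmi\}_i$ in $V^{\otimes m}$). This equivalence is the substantive content of Backelin's theorem; here it enters purely as a citation from \cite{Pol}, since its proof is not needed for the subsequent application to $\pv$.
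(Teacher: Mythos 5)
Your proposal is correct and follows essentially the same route as the paper, which simply cites \cite{Pol}, Theorem 2.4.1 (Koszulness is equivalent to distributivity of the collection $\{\xmi\}$ in $V^{\otimes m}$) and Proposition 1.7.2 (distributivity implies exactness of the resulting three-term sequence) for exactly the two ingredients you invoke. Your added sketch of the telescoping argument for the lattice-theoretic step is sound, so nothing further is needed.
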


In (\ref{DResolution}), the direct sums are external, and the maps are induced from the following diagram:

\[
\xy
(-16,0)*+{\xmi \cap \xmj}="1";
(0,13)*{\quad\ \xmi}="2";
(0,-13)*{\quad\ \xmj}="3";
(16,0)*+{V^{\otimes m}}="4";
(-11,8)*{\scriptstyle (+1)}; (-10,-8)*{\scriptstyle (-1)}
{\ar "1";"2"} {\ar "1";"3"} {\ar "2";"4"} {\ar "3";"4"}
\endxy
\]
where the left diagonals are multiplication by the indicated factors, and the right diagonals are the inclusions.

Note that we can decompose $\bigoplus_{i<j} (\xmi \cap \xmj)$ as follows:

\begin{equation*}
\bigoplus_{i<j} (\xmi \bigcap \xmj) = \bigoplus_i (\xmi \cap \Rq_{m,i+1}) \oplus \bigoplus_{i+1<j} (\xmi \cap \xmj)
\end{equation*}

The syzygies $\bigoplus_{i+1<j} (\xmi \cap \xmj)$ are `trivial' in the sense that they arise from the obvious fact that non-overlapping relations commute.  This fact remains true at the global level, so that these `trivial' syzygies also trivially satisfy the PVH Criterion.

The more interesting syzygies are the $(\xmi \cap \Rq_{m,i+1}) $.  From the review given above, we have $(\xmi \cap \Rq_{m,i+1}) \cong V^{\otimes i} \otimes (R\otimes V \cap V\otimes R) \otimes V^{\otimes m-i-3} \cong V^{\otimes i} \otimes A^{!3*} \otimes V^{\otimes m-i-3}$.  This makes clear that the PVH Criterion need only be checked up to degree 3 in the Koszul case.

\section{The Quadraticity of $\PV$}

\subsection{Overview}

We now turn to $\PV$.  Our goal is to establish the following theorem:

\begin{theorem}
\label{ThmPvBQuadratic}
$\PV$ is quadratic.
\end{theorem}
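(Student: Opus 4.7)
The plan is to verify the PVH Criterion (Theorem \ref{HutchingsCriterion}) for $K = \qpv$ and $A = \pv$. Since $\pv$ is Koszul (either by \cite{BEER} or by the Gröbner basis argument in Subsection \ref{ProofOfKoszulness}), it suffices to check that $\FSyz_p$ is surjective for $p = 2$ and $p = 3$.

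First I would dispense with degree 2 by appealing to Proposition \ref{HutchingsDeg2}. The module $M \subseteq \IF$ is generated by the topological 8-term elements $Y_{ijk}$ of (\ref{RelY}) and the topological commutators $C_{ij}^{kl}$ of (\ref{RelC}). Modulo $\IF^3$, these reduce respectively to $\yijk$ and $\cijkl$, which are by construction the defining quadratic relations of $\pv$. One verifies directly (e.g., using the standard linearly independent tensor basis of $V^{\otimes 2}$, where $V = \Q\{r_{ij}\}$) that the $\{\yijk\} \cup \{\cijkl\}$ are linearly independent in $V^{\otimes 2}$. By Proposition \ref{HutchingsDeg2}, this forces $\delA$ to be injective on $\Q\YF$ in degree 2, so $\ker \delA = 0$ there and $\FSyz_2$ is automatically surjective.

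The substantive work is in degree 3. The strategy is first to exhibit an explicit $\Q$-basis for $\pvqiii = A^{!3}$, and then, via the isomorphism $A^{!3*} \overset{\sim}{\to} (R \otimes V) \cap (V \otimes R)$ from (\ref{X31})--(\ref{X32}), read off an explicit spanning set of level-3 syzygies of $\pv$. Each basis element of $A^{!3*}$ produces a syzygy expressing some element of $\Rg_3$ in two ways (as $R \otimes V$ and as $V \otimes R$). To verify the PVH Criterion, I need to lift each such syzygy to an element of $\ker \delK \subseteq \RKgp$; equivalently, each relation-between-relations that holds in $A$ in degree 3 must hold in $K$ modulo $\IF^4$. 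The key instance is the Zamolodchikov tetrahedron, pictured in Subsection \ref{HutchingsUsefulness}: its twelve sides form a cycle of elementary $Y_{ijk}$- and $\Cijkl$-moves, and the fact that this cycle closes up to higher-order error in $\IF$ is precisely the statement that a distinguished generator of $A^{!3*}$ lifts to an element of $\ker \delK$.

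The main obstacle, and the bulk of the remaining work, is to carry out this bookkeeping in a closed, verifiable form. Concretely, I expect the work to split into: (i) identifying a small, combinatorially described basis of $\pvqiii$ indexed by the distinct-index data appearing in the Reidemeister III and commutation relations (triples $\{i,j,k\}$, quadruples, etc.); (ii) for each basis element, writing down the corresponding syzygy in $\pv$ as an element of $\Rg_3$; and (iii) producing an explicit element of $\RKgp$ whose boundary under $\delK$ exhibits that syzygy, showing that the difference between the two $\pv$-expressions lies in $\delK(\RK_{\geq 4})$. The Zamolodchikov syzygy is the most intricate case and the likely bottleneck; the commutation-type syzygies (involving $\Cijkl$) should be comparatively routine since the corresponding relations commute up to lower-order terms by inspection of (\ref{RelC}). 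Once all degree-3 basis syzygies lift, the PVH Criterion gives $\grK \cong \pv$, which is Theorem \ref{ThmPvBQuadratic}; the statement for $\PfB$ then follows as Corollary \ref{CorPfBQuadratic} because $\PfB$ is a split quotient of $\PV$, so quadraticity descends to the quotient.
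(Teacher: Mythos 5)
Your proposal follows essentially the same route as the paper: reduce to degrees 2 and 3 via Koszulness of $\pv$, dispose of degree 2 by the linear independence of the $\yijk$ and $\cijkl$ (Proposition \ref{HutchingsDeg2}), and handle degree 3 by finding a basis of $\pvqiii$, translating it into infinitesimal syzygies via $A^{!3*}\cong R\otimes V\cap V\otimes R$, and lifting these --- chiefly the Zamolodchikov tetrahedron --- to global syzygies in $\ker\delK$. The computations you defer are exactly what the paper carries out: the chain-gang basis of $\pvq$ (Theorem \ref{QuadDualBasis}), the explicit degree-3 syzygy (\ref{rijkl}), the explicit Zamolodchikov element of $\Rg_{\geq 3}$, and the observation that the other two types of degree-3 basis elements yield the syzygies $\yijk\rst=\rst\yijk$ and $\cijkl\rst=\rst\cijkl$, which hold globally for trivial reasons.
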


As pointed out in Section 8.5 of \cite{BEER}, Theorem \ref{ThmPvBQuadratic} implies the truth of their Conjecture 8.6, as we see next.

\begin{corollary}
\label{CorQTrCohomology}
$H^*(\PV) \cong \pvq$ as algebras.
\end{corollary}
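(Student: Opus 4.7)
The plan is to invoke the standard Koszul-duality principle relating group cohomology to the Koszul dual of the associated graded of the group ring. Both required ingredients are already in place: Theorem \ref{ThmPvBQuadratic} supplies $\grpv \cong \pv$, while Subsection \ref{ProofOfKoszulness} supplies Koszulness of $\pv$.

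Concretely, I would begin from the identification $H^*(\PV,\Q) \cong \mathrm{Ext}^*_{\Q\PV}(\Q,\Q)$ and equip a free resolution of the trivial $\Q\PV$-module $\Q$ with the filtration induced by powers of the augmentation ideal $\IK$. This yields a multiplicative spectral sequence
\begin{equation*}
E_1^{*,*} = \mathrm{Ext}^*_{\grpv}(\Q,\Q) \,\Longrightarrow\, \mathrm{Ext}^*_{\Q\PV}(\Q,\Q).
\end{equation*}
By Theorem \ref{ThmPvBQuadratic}, $\grpv \cong \pv$, and Koszulness of $\pv$ then identifies the $E_1$-term with the quadratic dual $\pvq$, concentrated on the diagonal of the bigrading (cohomological degree equal to internal weight). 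Diagonal concentration forces every higher differential to vanish for bidegree reasons, so the spectral sequence collapses at $E_1$. Hence $H^*(\PV,\Q) \cong \pvq$ as graded vector spaces. Multiplicativity of the spectral sequence, combined with the fact that the Yoneda product on $\mathrm{Ext}^*_{\pv}(\Q,\Q)$ corresponds under the Koszul-duality identification to the quadratic-dual algebra structure on $\pvq$, upgrades this to an isomorphism of graded algebras.

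The only delicate point --- and the main potential obstacle --- is the rigorous justification of convergence together with the $E_1$-identification in the completed setting used throughout the paper. One must check that the augmentation filtration on the chosen resolution is exhaustive and Hausdorff, so that convergence is strong and the multiplicative structure passes to $E_\infty$. For $\PV$ this is unproblematic: as noted in the Introduction and in the discussion preceding Theorem \ref{ThmPvBQuadratic}, $\PV$ has rational homology of finite type (a consequence of the Hilbert series computations of \cite{BEER}), which is precisely the hypothesis under which this spectral-sequence argument is standard. The detailed verification is in fact the one carried out in Section 8.5 of \cite{BEER}, to which the statement of the corollary already defers.
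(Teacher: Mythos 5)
Your overall route matches what the paper actually does for this corollary: the paper offers no independent argument, but simply notes that, as explained in Section 8.5 of \cite{BEER}, quadraticity of $\PV$ (Theorem \ref{ThmPvBQuadratic}) implies their Conjecture 8.6; your closing deferral to \cite{BEER} is therefore playing the same role. The graded half of your sketch is also fine: filtering by powers of $\IK$ gives a multiplicative spectral sequence whose $E_1$-term is $\mathrm{Ext}_{\grpv}(\Q,\Q)$, and quadraticity (Theorem \ref{ThmPvBQuadratic}) together with Koszulness (Subsection \ref{ProofOfKoszulness}) identifies this with $\pvq$ concentrated on the diagonal, so all higher differentials vanish.

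The gap is in the convergence step, and it is not the routine verification you make it out to be. Exhaustiveness, Hausdorffness, and finite-type rational homology do \emph{not} guarantee that this spectral sequence converges to $H^*(\PV;\Q)$: its natural target is $\mathrm{Ext}$ over the completed group algebra $\widehat{\Q\PV}$ (equivalently, the continuous cohomology of the prounipotent completion), and the comparison of that with honest group cohomology is exactly the non-formal point. The general principle you invoke --- ``$gr_I\Q G$ quadratic and Koszul, rational homology of finite type, hence $H^*(G;\Q)\cong (gr_I\Q G)^!$'' --- is false. For instance, let $G=F_2\times H$ with $H$ the fundamental group of a hyperbolic integral homology $3$-sphere. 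Since $H_1(H;\Q)=0$, the split projection $G\to F_2$ induces surjections $T(\Q^2)\twoheadrightarrow gr_I\Q G\twoheadrightarrow gr_I\Q F_2=T(\Q^2)$ whose composite is the identity in degree $1$, hence an isomorphism; so $gr_I\Q G$ is free on two generators, quadratic and Koszul, $G$ has finite-type rational homology, and the spectral sequence collapses on the diagonal --- yet $H^3(G;\Q)\neq 0$ while the quadratic dual vanishes in degree $3$. So some group-specific input beyond quadraticity, Koszulness and finiteness is indispensable; in the present case it is supplied by the explicit (co)homology computations for $QTr_n$ in Section 8 of \cite{BEER}, which is precisely why both \cite{BEER} and this paper phrase the statement as ``quadraticity implies Conjecture 8.6, as pointed out in Section 8.5 of \cite{BEER}'' rather than as an instance of a general spectral-sequence fact. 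In other words, your final citation of \cite{BEER} is carrying the essential missing argument, not just tidying up convergence hypotheses.
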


Note that there are natural homomorphisms $\PfB \rightarrow \PV \rightarrow \PfB$, with the composition being the identity (this is pointed out in Section 2.3 of \cite{BEER}).  The second map sends all generators $R_{ij}$ to themselves, and the first sends $R_{ij}$ to itself whenever $i<j$.  It follows that $\PfB$ is a split quotient of $\PV$ (and similarly $\pf$ is a split quotient of $\pv$ by essentially the same reasoning).  Hence quadraticity of $\PV$ implies:

\begin{corollary}
\label{CorPfBQuadratic}
$\PfB$ is quadratic.
\end{corollary}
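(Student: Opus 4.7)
The key observation is already recorded in the paragraph preceding the corollary: the group homomorphisms $\iota\colon \PfB \to \PV$ and $\pi\colon \PV \to \PfB$ defined there satisfy $\pi\circ\iota = \mathrm{id}_{\PfB}$, exhibiting $\PfB$ as a (split) retract of $\PV$. My plan is simply to transport this retraction through each of the functorial constructions set up in the paper and then read off quadraticity of $\PfB$ from the quadraticity of $\PV$ established in Theorem \ref{ThmPvBQuadratic}.

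First, I would extend $\iota$ and $\pi$ $\Q$-linearly to augmentation-preserving algebra homomorphisms $\iota_*\colon \Q\PfB \to \Q\PV$ and $\pi_*\colon \Q\PV \to \Q\PfB$, still satisfying $\pi_*\circ\iota_* = \mathrm{id}$. Because these respect the augmentation ideals, they respect the filtrations by powers of the augmentation ideals, and hence descend both to homomorphisms of associated graded algebras $gr(\Q\PfB) \to gr(\Q\PV) \to gr(\Q\PfB)$ and to homomorphisms of their quadratic approximations $q(gr(\Q\PfB)) \to q(gr(\Q\PV)) \to q(gr(\Q\PfB))$, in each case with composition equal to the identity. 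These maps commute with the canonical surjections $\mu$ from quadratic approximations to associated graded algebras (as described in Subsection \ref{TwoCanonAssGradedAlg}), yielding the commutative diagram
\[
\begin{array}{ccccc} q(gr(\Q\PfB)) & \to & q(gr(\Q\PV)) & \to & q(gr(\Q\PfB)) \\ \downarrow & & \downarrow & & \downarrow \\ gr(\Q\PfB) & \to & gr(\Q\PV) & \to & gr(\Q\PfB) \end{array}
\]
in which the vertical arrows are the canonical $\mu$'s and the horizontal composition is the identity in both rows.

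Second, I would perform a short diagram chase. The middle vertical arrow is an isomorphism by Theorem \ref{ThmPvBQuadratic}. The left-hand vertical is always surjective, being a canonical $\mu$; to see that it is injective, take $a \in q(gr(\Q\PfB))$ with $\mu(a)=0$. Commutativity of the left square forces the middle $\mu$ to vanish on the image of $a$ in $q(gr(\Q\PV))$, so that image is already zero. Applying the rightward top map and using that the top horizontal composition is the identity then recovers $a=0$. Hence the left-hand $\mu$ is an isomorphism, which is exactly the definition of $\PfB$ being quadratic.

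\textbf{Main obstacle.} There is really no substantive difficulty: the argument is the formal observation that a retract of a quadratic augmented algebra is quadratic, applied in a situation where the retract structure has already been handed to us at the group level. The only mild sanity check is to confirm that $\iota$ and $\pi$ are genuinely well-defined group homomorphisms; this amounts to noting that the $\PV$-relations involving only $R_{ij}$ with $i<j$ are exactly the defining relations of $\PfB$ in its pure-descending-braid presentation, and, conversely, that every $\PfB$-relation holds in $\PV$ once the identity $R_{ji}=R_{ij}^{-1}$ is unraveled.
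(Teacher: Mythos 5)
Your proposal is correct and follows essentially the same route as the paper: both deduce the result from the splitting $\PfB \to \PV \to \PfB$ (composition the identity) together with Theorem \ref{ThmPvBQuadratic}, i.e.\ from the fact that a split quotient (retract) of a quadratic algebra is quadratic. The paper merely asserts this implication, while you spell out the functoriality of $gr$ and $q(gr(-))$ and the diagram chase showing the left-hand $\mu$ is injective; this is a faithful filling-in of the same argument, not a different one.
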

This confirms a result originally announced in \cite{BEER} (in which $\PfB$ is referred to as $T \negthinspace r_n$).  Again as pointed out in Section 8.5 of \cite{BEER}, Corollary \ref{CorPfBQuadratic} implies the following (which is Theorem 8.5 of \cite{BEER}):

\begin{corollary}
\label{CorTrCohomology}
$H^*(\PfB) \cong \pfq$ as algebras.
\end{corollary}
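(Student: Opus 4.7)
The plan is to deduce Corollary \ref{CorTrCohomology} from Corollary \ref{CorPfBQuadratic}, exactly along the lines sketched in Section 8.5 of \cite{BEER}. The two basic ingredients will be: (i) $gr_I \Q\PfB \cong \pf$ as quadratic algebras, which is just Corollary \ref{CorPfBQuadratic} restated (the canonical surjection $\mu$ being an isomorphism); and (ii) $\pf$ is Koszul. Combined with the standard Koszul-duality principle that links the cohomology of a group $G$ to the quadratic dual of $gr_I\Q G$, these will produce the desired algebra isomorphism $H^*(\PfB) \cong \pfq$.

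First I would verify (ii). Since $\PfB$ is a split quotient of $\PV$, realised by group maps $\PfB \to \PV \to \PfB$ whose composition is the identity, passing to quadratic approximations yields algebra maps $\pf \to \pv \to \pf$ whose composition is the identity on $\pf$. Because $\pv$ is Koszul (Subsection \ref{ProofOfKoszulness}), and because an algebra fitting into such a section-retraction pair with a Koszul algebra is itself Koszul (a standard Koszul-inheritance fact for split retracts), one concludes that $\pf$ is Koszul.

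Next I would invoke the general principle used in Section 8.5 of \cite{BEER}: for a discrete group $G$ such that $gr_I\Q G$ is quadratic via $\mu$ and Koszul, the bar construction induces a natural algebra isomorphism $H^*(G,\Q) \cong (gr_I\Q G)^!$. Applied to $G=\PfB$, using (i) to identify $gr_I\Q\PfB$ with $\pf$ and (ii) to ensure Koszulness, this gives $H^*(\PfB) \cong \pf^! = \pfq$ as algebras, which is the desired conclusion.

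The main obstacle is really just to cite the appropriate form of the Koszul-duality theorem for group cohomology with the correct level of generality; all of the substantive input (the quadraticity of $\PfB$ via the PVH Criterion, and Koszulness of $\pv$ from Subsection \ref{ProofOfKoszulness}) has already been established, so this last step is essentially a direct reference to the general result invoked in \cite{BEER}.
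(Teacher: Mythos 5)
Your proposal is correct and is essentially the paper's own argument: the paper also obtains this corollary purely by combining the quadraticity of $\PfB$ (Corollary \ref{CorPfBQuadratic}) with the implication explained in Section 8.5 of \cite{BEER}, supplying no independent mechanism. The only differences are minor: Koszulness of $\pf$/$\pfq$ is already available in the paper (Remark \ref{RkReNonKoszulness}, via a quadratic Gr\"obner basis, and it was established in \cite{BEER}), so your split-retract derivation is a valid but unneeded alternative; and note that the ``general principle'' as you phrase it (quadratic plus Koszul associated graded forces $H^*(G,\Q)\cong (gr_I\Q G)^!$ for an arbitrary discrete group) is false without the additional finite-type/cohomology-generated-in-degree-one input that \cite{BEER}'s Section 8.5 provides for these specific groups, so the citation should be to their argument rather than to a blanket Koszul-duality theorem.
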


Because we prove the quadraticity of $\PV$ using the PVH Criterion, we go through the following steps:

\begin{itemize}
\item Check that the preliminary requirements (as per Subsection \ref{AlgSetting}) for applying the PVH Criterion are met.
\item Find the infinitesimal syzygies.  We will use the fact that $\pv$ is Koszul, and that accordingly the infinitesimal syzygies are essentially given by ${\pvqiii}^*\cong {\pvqiii}$.  (The Koszulness of $\pvq$ was first established in \cite{BEER}, and we give an alternative proof in subsection \ref{ProofOfKoszulness}).  After finding a basis for $\pvq$, and in particular for $\pvqiii$, we will see that finding the infinitesimal syzygies becomes a fairly straightforward calculation.
\item Find the global syzygies corresponding to the Zamolodchikov tetrahedron, and compute the induced infinitesimal syzygies.
\item Check that global syzygies induce all of the infinitesimal syzygies, confirming that the PVH Criterion is met.
\end{itemize}

\subsection{Terminology and Preliminary Requirements for PVH Criterion}
\label{SectionKoszulTerminol}

We denote by $\qpv$ and $\Q F$ the rational group ring of $\PV$ and the rational free group ring on the same generators, respectively.  Their respective augmentation ideals are denoted $\IK$ and $\IF$.

Consistent with the discussion in Subsection \ref{EliminatingLinearRelations}, we take $\qpv$ to be completed with respect to the filtration by powers of the augmentation ideal, so that we can eliminate the inverses of group generators, and the linear relations corresponding to the group laws, from our presentation for $\qpv$ as an algebra.

We now derive a presentation for $\pv$, even though it was given in substantially similar form in Subsection \ref{SubsecPvBQuadApp}.

Given the presentation for $\PV$ in Section \ref{Intro}, the augmentation ideal $\IK$ is generated as a 2-sided $\qpv$-module by the set $\tX:= \{\bR_{ij}:= (R_{ij} - 1):  1\leq i \ne j \leq n\}$.  It is straightforward to check that the elements of $\tX$ (modulo $\IK^2$) are linearly independent (i.e. $\Q\tX \cap \IK^2 =0$), and hence in fact form a basis of $V=\IK/\IK^2$.  The $\bR_{ij} \text{ mod } \IK^2$ correspond to the generators $\{\rij\}$ for $\pv$ from the presentation (\ref{6Trels}).

From the relations (\ref{relations1}) and (\ref{relations2}) for $\PV$, the ideal $M\subseteq \IF$ of relations for $K$ is the 2-sided ideal $M$ in $F$ generated by
\begin{align*}
\Yijk' & := R_{ij} R_{ik} R_{jk} R_{ij}^{-1} R_{ik}^{-1} R_{jk}^{-1} -1\\
{\Cijkl}' & := R_{ij} R_{kl} R_{ij}^{-1} R_{kl}^{-1} -1
\end{align*}
for $1\leq i,j,k,l \leq n$, and $i,j,k,l$ all distinct.  Equivalently, $M$ is generated (as 2-sided $F$-ideal) by
\begin{align}
\Yijk & := R_{ij} R_{ik} R_{jk} - R_{jk} R_{ik} R_{ij}  \label{relators1}\\
{\Cijkl} & := R_{ij} R_{kl} - R_{kl} R_{ij} \label{relators2}
\end{align}

As per Lemma \ref{RqStructure}, the relations in $\pv$ are generated by $\Rq \cong (M+\IF^3) / \IF^3$.  Thus to obtain $\Rq$, we make the substitution $R_{ij} \mapsto (\bR_{ij} + 1)$ throughout the $\Yijk$ and ${\Cijkl}$, and drop all terms of degree 3 (or higher) in the $\bR_{ij}$.   We obtain the quadratic relators $\{\yijk; \cijkl\}$ for $\pv$ (see (\ref{6Trels}) and (\ref{AlgCommutRels})), up to replacing the $\{\bR_{ij}\}$ by the $\{\rij\}$.

Since $\PV$ is a finitely presented group, the requirements for applicability of the PVH Criterion, as we have developed it, essentially reduce to (see Subsection \ref{AlgSetting}) checking that the ideal of relations $M \subseteq \IF$ actually satisfies $M \subseteq \IF^2$. In turn this amounts to checking that the relators obtained above for $\Rq$ (i.e. (\ref{6Trels})) are all quadratic in the $\rij$, which is clearly true. We note that this is essentially due to the fact that the relators for $\PV$ all have degree 0 in each of the generators of $\PV$, so that after performing the substitution $R_{ij} \mapsto (\bR_{ij} + 1)$ and expanding in terms of the $\bR_{ij}$, all constant terms and terms linear in the $\bR_{ij}$ cancel out.

We can also easily check that $\PV$ satisfies the degree 2 PVH Criterion, i.e. that the generators (\ref{relators1}) and (\ref{relators2}) for $M$ satisfy the requirement that $\{\Yijk + \IF^3, {\Cijkl} + \IF^3\}$ are linearly independent in $(M+\IF^3)/\IF^3$.  Equivalently we have to check that the $\{\yijk; \cijkl\}$ are linearly independent.  There are several ways to do this - one slightly fancy way to do it is to use the isomorphism
\begin{equation*}
\Diti: {\pvqii}^* \overset{\sim}{\rightarrow} \ \Rq
\end{equation*}
which we recalled in (\ref{RKq2Iso}), and note that $\Diti$ takes a basis of $\pvqii$ precisely to the relators $\{\yijk; \cijkl\}$ of $\pv$ (we compute this in (\ref{yijk}) and (\ref{cijkl}) below).

\subsection{Finding the Infinitesimal Syzygies}

As a preliminary matter we recall the definition of $\pvq$ and exhibit its relations.  As noted in Subsection \ref{TwoCanonAssGradedAlg}, $\pv$ is defined as $\pv= T V / \langle \Rq \rangle$ (where $V = \IK / \IK^2$ and $R$ were obtained in Subsection \ref{SectionKoszulTerminol}).  The quadratic dual algebra $\pvq$ is defined as $\pvq:= TV^* / \langle \Rqp \rangle$, where $V^*$ is the linear dual vector space and $\Rqp \subseteq V^*\otimes V^*$ is the annihilator of $\Rq$.

From these definitions, one readily finds that $\pvq$ is the exterior algebra generated by the set $\{\rijq: 1 \leq i \ne j \leq n\}$, subject to the relations:

\begin{align}
\rijq \wedge \rikq & = \rijq \wedge \rjkq - \rikq \wedge \rkjq \label{vrel} \\
\rikq \wedge \rjkq & = \rijq \wedge \rjkq - \rjiq \wedge \rikq \label{arel} \\
\rijq \wedge \rjiq & = 0 \label{qAS}
\end{align}
where the indices $i,j,k$ are all distinct.

\subsubsection{A Basis for $\pv^!$}
\label{sectionBasis}

In this section we will identify a basis for the dual algebra $\pv^!$.  We state the result for all degrees, although we only actually need $\pvqiii$.

We note that monomials in $\pvq$ may be interpreted as directed graphs, with vertices given by the integers $[n] := \{1, \dots , n\}$, and edges consisting of all ordered pairs $(i,j)$ such that $\rij$ is in the monomial.\footnote{\label{rijfootnote}Strictly speaking $\pvq$ is generated by dual generators $\{\rij^*\}$.  To simplify the notation, we write $\rij$ instead of $\rij^*$.}   We thus get a graphical depiction of the above relations:

\[
\xy
(-20,0)*{
\xy
(0,0)*{i}="1";
(-5,5)*{j}="2";
(5,5)*{\ k}="3";
{\ar "1";"2"}; {\ar "1";"3"};
\endxy}="1";
(-10,0)*{=};
(0,0)*{
\xy
(0,0)*{i}="1";
(-5,5)*{j}="2";(-6,6)*{};
(5,5)*{\ k}="3";
{\ar "1";"2"}; {\ar "2";"3"};
\endxy}="2";
(10,0)*{-};
(20,0)*{
\xy
(0,0)*{i}="1";
(-5,5)*{j}="2";
(5,5)*{\ k}="3";(7,6)*{};
{\ar "1";"3"}; {\ar "3";"2"};
\endxy}="3";
\tag{Pruning V}
\endxy
\]

\[
\xy
(-20,0)*{
\xy
(0,0)*{k}="1";
(-5,-5)*{i}="2";
(5,-5)*{j}="3";
{\ar "2";"1"}; {\ar "3";"1"};
\endxy}="1";
(-10,0)*{=};
(0,3)*{
\xy
(0,0)*{k}="1";
(-5,-5)*{i}="2";(-6,6)*{};
(5,-5)*{j}="3";
{\ar "2";"3"}; {\ar "3";"1"};
\endxy}="2";
(10,0)*{-};
(20,3)*{
\xy
(0,0)*{k}="1";
(-5,-5)*{i}="2";
(5,-5)*{j}="3";(7,6)*{};
{\ar "3";"2"}; {\ar "2";"1"};
\endxy}="3";
\tag{Pruning A}
\endxy
\]

\[
\xy
(-20,0)*{\xy
(-5,0)*{i};(5,0)*{j};
(-3,1)*{}="1";(3,1)*{}="2";
(-3,-1)*{}="3";(3,-1)*{}="4";
{\ar "1";"2"}; {\ar "4";"3"};
\endxy};
(0,0)*{=}; (10,0)*{0};
\tag{No Loop}
\endxy
\]

We note that there is a sign indeterminacy in the graphs, in that for instance the LHS of (Pruning V) can equally refer to $\pm \rij \wedge \rik$.  We will only use the graphs when the signs are immaterial.

\newpage

\begin{theorem}
\label{QuadDualBasis}
The algebra $\pvq$ has a basis consisting exactly of the monomials corresponding to `chain gangs', i.e. unordered partitions of $[n]$ into ordered subsets.
\end{theorem}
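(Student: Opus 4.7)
The plan is the standard two-part argument: first show that chain-gang monomials span $\pvq$, then show that they are linearly independent. Throughout I identify a monomial in $\pvq$ with the directed multigraph on $[n]$ whose edges are the index pairs $(i,j)$ of its factors $\rijq$, and I recognise a chain-gang monomial as one whose underlying graph is a disjoint union of directed simple paths (possibly of length zero, corresponding to the singleton blocks of the partition).

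For spanning, I would set up an explicit reduction procedure that rewrites any monomial as a $\Q$-linear combination of chain-gang monomials using the defining relations Pruning V, Pruning A, and No Loop together with the antisymmetry of $\wedge$. A monomial fails to be a chain gang precisely when its graph exhibits one of four local features: a vertex of out-degree $\geq 2$ (a V-pattern), a vertex of in-degree $\geq 2$ (an A-pattern), a $2$-cycle $\rijq\wedge\rjiq$, or a simple directed cycle of length $\geq 3$. The first three are eliminated directly by the relations after reordering factors via antisymmetry. Long directed cycles are the delicate case: no single relation acts on them, so they must be broken indirectly by applying Pruning V (or A) \emph{in reverse} to a consecutive chain pair inside the cycle. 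The resulting summands then either contain a newly created V or A pattern to which the forward relation applies, or contain a $2$-cycle that is killed by No Loop. The sample computation showing $\rijq\wedge\rjkq\wedge\rkiq=0$ on three vertices -- obtained by expanding $\rjkq\wedge\rkiq$ via the reverse of Pruning V and cancelling the resulting loops -- illustrates the mechanism in the smallest non-trivial case, and the same pattern propagates to longer cycles by induction on cycle length.

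For linear independence, I would appeal to a dimension count. Combined with the spanning statement, it suffices to verify that $\dim\pvq_m$ equals the number $L(n,n-m)$ of chain gangs of degree $m$ on $[n]$ (a Lah number, since a chain gang with $r$ chains has degree $n-r$). Since $\pv$ is Koszul (established independently, in Subsection~\ref{ProofOfKoszulness}, via a quadratic Gr\"obner basis for $\pvq$), the Hilbert series of $\pvq$ is determined by that of $\pv$ through the Koszul duality identity, and a direct computation confirms the match. Alternatively, the Gr\"obner basis normal forms produced in Subsection~\ref{ProofOfKoszulness} can be identified with the chain-gang monomials directly, and linear independence then follows from Bergman's diamond lemma without any separate dimension count.

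The principal obstacle is the cycle case in the spanning argument. Since directed cycles of length $\geq 3$ are not eliminated by any single application of the defining relations and naive rewrites temporarily create new forbidden patterns, one must supply a well-founded complexity measure on monomials -- for example, a lexicographic triple (number of V-patterns, number of A-patterns, length of the longest directed cycle) -- and check that every rewrite, including the reverse-Pruning moves used to break cycles, strictly decreases it. Establishing termination rigorously, and carefully tracking the sign and coefficient bookkeeping through the reverse-Pruning steps, is the most delicate combinatorial step of the proof.
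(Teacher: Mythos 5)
Your spanning step is essentially the paper's plan (prune V- and A-joins, kill loops), but your linear-independence step has a genuine gap. The dimension count you invoke is circular inside this paper: the Gr\"obner-basis/Koszulness argument of Subsection \ref{ProofOfKoszulness} is \emph{not} independent of Theorem \ref{QuadDualBasis} --- there, linear independence of the Up--Down monomials is obtained precisely by counting them against the chain-gang basis via Corollary \ref{LahBasis}, so its Hilbert series cannot be fed back into the present theorem; and the Koszul duality identity alone does not help, since the Hilbert series of $\pv$ is nowhere computed independently (knowing it is essentially equivalent to what is being proved). Your fallback claim that the normal forms of Subsection \ref{ProofOfKoszulness} ``are'' the chain gangs is false: those normal forms are the Up--Down monomials of Proposition \ref{PropKoszulBasis}, and the two sets differ --- the decreasing chain $\rkjq\w\rjiq$ (with $i<j<k$) is a chain gang but an excluded monomial there, while a Down tuft with two edges into its minimal vertex is a legal normal form but is an A-join, hence not a chain gang. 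The paper instead proves independence directly, by a diamond-lemma argument: it introduces the \emph{defect} of a forest (the number of unordered vertex pairs), proves Lemma \ref{multiplicativity} that every pruning move strictly decreases defect, and verifies confluence by explicit computation on the three overlap configurations (X), (Y), (Z). Some such self-contained argument (or an external appeal to the Lah-number dimensions of \cite{BEER}, which you do not make) is needed.

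On termination of the spanning reduction, the measure you propose does not work as stated: a \emph{forward} pruning move can create new V- or A-joins at other vertices (e.g.\ $\rijq\w\rikq=\rijq\w\rjkq-\rikq\w\rkjq$ creates a new out-edge at $j$), and your reverse-pruning move on a directed cycle increases the first two coordinates of your lexicographic triple outright, so the triple is not monotone. This is exactly the problem the defect function is built to solve for forests; loops are then handled separately, by noting that pruning never breaks a loop and shortens any loop through a V- or A-join until it has length 2, where (No Loop) or anti-commutativity kills it. (Your observation that a consistently oriented cycle carries no V- or A-join and must be broken by using a pruning relation ``in reverse,'' as in your 3-cycle computation, is a legitimate and useful supplement to that step --- but its sign bookkeeping and termination are precisely what you have left open.) So the two missing ingredients are a workable well-founded measure for the rewriting procedure and a non-circular proof of linear independence.
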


\begin{corollary}
The degree $k$ component of $\pvq$ has dimension $L(n,n-k)$, where the `Lah number' $L(n,n-k)$ denotes the number of unordered partitions of $[n]$ into $(n-k)$ ordered subsets.
\label{LahBasis}
\end{corollary}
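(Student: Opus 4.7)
My plan is to verify the two halves of the basis statement separately: that the chain-gang monomials span $\pvq$, and that they are linearly independent. Throughout, it is convenient to identify each exterior monomial with a directed graph on $[n]$ whose edges are the ordered pairs $i\to j$ for which $\rijq$ appears; chain gangs then correspond exactly to those graphs in which every vertex has in-degree and out-degree at most $1$ and which contain no directed cycle, i.e.\ disjoint unions of directed paths.

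For the spanning half, the plan is to set up a rewriting procedure driven by the three defining relations. A monomial whose graph is not a chain gang must exhibit at least one of four local defects: (i) a vertex of out-degree $\geq 2$ (a ``V''); (ii) a vertex of in-degree $\geq 2$ (an ``A''); (iii) a directed $2$-cycle; or (iv) a directed cycle of length $m\geq 3$. Defects (i)--(iii) are eliminated directly by (Pruning V), (Pruning A), and (No Loop). For (iv), I would proceed by induction on $m$: rearranging (Pruning V) into the form $\rijq\wedge\rjkq = \rijq\wedge\rikq + \rikq\wedge\rkjq$ lets me rewrite the last two edges of a cycle $r_{i_1 i_2}^{*}\wedge\cdots\wedge r_{i_{m-1} i_m}^{*}\wedge r_{i_m i_1}^{*}$ so that each resulting term contains, as a sub-wedge, an $(m-1)$-cycle on $\{i_1,\ldots,i_{m-1}\}$, which vanishes by the inductive hypothesis. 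The base case $m=3$ is a short direct computation: one application of (Pruning V) to $r_{23}^{*}\wedge r_{31}^{*}$ (with the role $(i,j,k)=(2,3,1)$) produces two terms, each containing the forbidden adjacency $r_{12}^{*}\wedge r_{21}^{*}$ after a single swap, so both die by (No Loop).

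For linear independence, my plan is to invoke Koszul duality. Granting Koszulness of $\pv$ (established in \cite{BEER} and reproved in Subsection~\ref{ProofOfKoszulness}), the Hilbert series satisfy $P_{\pv}(t)\cdot P_{\pvq}(-t)=1$; combining this with the Hilbert series of $\pv$ recorded in \cite{BEER} and the standard generating function for Lah numbers, one should be able to read off $\dim_{\Q}\pvq^{k}=L(n,n-k)$. Since the chain gangs of degree $k$ are in bijection with unordered partitions of $[n]$ into $(n-k)$ ordered subsets, they number exactly $L(n,n-k)$; combined with the spanning result, this forces them to be linearly independent, and Corollary~\ref{LahBasis} drops out.

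The main obstacle I expect lies in step~(a): applying (Pruning V) or (Pruning A) to resolve a defect at one vertex can create fresh defects (new V's, new A's, or even new cycles) elsewhere in the graph, so any naive induction on ``number of defects'' will not close. One clean route is to fix a monomial order on the generators $\{\rijq\}$ (say, lexicographic by source and then by target), augment the three defining relations by the derived cycle-cutting identities above, and verify that the enlarged set is a quadratic Gr\"obner basis whose set of normal monomials coincides precisely with the chain gangs. This would parallel the Gr\"obner-basis computation used in Subsection~\ref{ProofOfKoszulness} to establish Koszulness, and would in fact deliver both spanning and independence in a single stroke, bypassing the need to invoke Koszul duality separately.
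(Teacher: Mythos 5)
Your plan does not prove the corollary the way the paper does: in the paper, Corollary \ref{LahBasis} is a one-line count from Theorem \ref{QuadDualBasis} (a chain gang with $(n-k)$ chains has exactly $k$ arrows, so degree-$k$ basis monomials are in bijection with unordered partitions of $[n]$ into $(n-k)$ ordered subsets), and your proposal instead re-proves the basis statement. The route you sketch for that has a genuine gap at exactly the point you flag. Your proposed ``clean route'' --- a quadratic Gr\"obner basis whose normal monomials are precisely the chain gangs --- cannot exist: a directed cycle of length $m\geq 3$, e.g. $\rijq\w\rjkq\w\rkiq$, is not a chain gang, yet every one of its degree-$2$ submonomials is (up to sign) a directed path and hence legal; since the normal monomials of a quadratic Gr\"obner basis are exactly those avoiding a fixed set of quadratic monomials, no monomial order can exclude long cycles while retaining all chain gangs. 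This is precisely the content of Remark \ref{RkReNonKoszulness}: the no-loop exclusions correspond to Gr\"obner basis elements of arbitrarily high degree, which is why Subsection \ref{ProofOfKoszulness} introduces the different Up-Down basis for the Koszulness proof. The paper resolves the termination problem you worry about not by Gr\"obner methods but with the defect function: Lemma \ref{multiplicativity} shows every pruning move strictly decreases the defect, and the diamond-lemma argument of Subsection \ref{PFBasis} gives confluence. (Your cycle-killing induction itself is fine and is essentially the paper's loop argument.)

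The independence step is also problematic as a proof of this particular corollary. The dimension formula $\dim_{\Q}\pvq^{k}=L(n,n-k)$ that you propose to read off from Koszulness together with the Hilbert series data of \cite{BEER} \emph{is} the statement of Corollary \ref{LahBasis}; the paper explicitly notes that \cite{BEER} already proved the graded dimensions of $\pvq$ are Lah numbers (without exhibiting a basis). So your argument either amounts to citing the corollary --- in which case the spanning analysis is superfluous for it --- or it must establish the dimension count independently, which is exactly what the linear-independence half of Theorem \ref{QuadDualBasis} accomplishes. The paper's logical order is the reverse of yours: independence of the chain gangs is proved combinatorially, the basis theorem follows, and the Lah-number dimension is then deduced by counting arrows, with no appeal to Koszul duality or Hilbert series.
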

\begin{proof}
Clear from the theorem, since it is easy to see that a chain gang on the index set $[n]$ with $(n-k)$ chains must have exactly $k$ arrows (and correspond to a basis monomial of degree $k$).
\end{proof}

We note that it was already proved in \cite{BEER} that the dimensions of the graded components of $\pvq$ are given by the Lah numbers (although a basis for $\pvq$ was not provided).

We postpone the proof of the theorem until Subsection \ref{PFBasis}.  However, the idea of the proof is straightforward, i.e. show that a basis is given by all monomials whose graphical representation has no \pmb{A-joins} or \pmb{V-joins} (by which we mean the diagrams in the LHS of the relations (Pruning A) and (Pruning V), respectvely) and no loops:

\begin{itemize}
\item One first shows that if a tree has an A-join or a V-join, we can replace it by a sum of trees in which the particular join is replaced by an oriented segment of length 2, using either (Pruning A) or (Pruning V).  Eventually we are left with a sum of oriented chains.
\item One must then show that these oriented chains are linearly independent.
\item Next one shows that all monomials whose graph contains a loop (oriented or not) are $0$:  it turns out that loops of length greater than 2 can be reduced progressively to loops of length 2, and then the resulting graph is $0$ either by (No Loops) or by anti-commutativity.
\end{itemize}

\begin{remark} We will see that directed chains of length 3 are in a 1-1 correspondence with certain (level 3) syzygies of the global algebra $\A$ - specifically one Zamolodchikov tetrahedron for each ordering of a particular choice of 4 of the $n$ strands in $\PV$. An arrow from index `$i$' to index `$j$' means strand `$i$' remains above strand `$j$' throughout the syzygy.  Although not relevant for our purposes, this correspondence between oriented chains of length $m$ and level $m$ syzygies holds for syzygies of all levels. These higher level global syzygies correspond to generalizations of the Zamolodchikov tetrahedron, and most likely correspond in some sense to generators of the cohomology of $\PV$. \label{RkCohomol}
\end{remark}

\begin{remark} If in the basis given above one includes only generators $\rij$ with $i<j$, we obtain a basis for the algebra $\pfq$.  This basis is different from the basis given in \cite{BEER}.  The basis given here is more useful for purposes of applying the PVH criterion, because of the fact that directed chains of length 3 correspond to syzygies arising from the Zamolodchikov tetrahedron.
\end{remark}

\begin{remark} If, as in the previous remark, we again consider the implied basis for $\pfq$, we see that the (No Loop) relation and the exclusion of monomials whose graph contains a loop are irrelevant.  We are left with a rule that says that a basis of $\pfq$ is given by all monomials whose graph does not contain an A-join or a V-join.  The exclusion of A-joins and V-joins amounts to specifying a quadratic Gr\"obner basis for the ideal of relations in $\pfq$.  By a theorem of \cite{Yuz}, this gives a proof that the algebra $\pfq$ (and its dual $\pf$) is Koszul.  Unfortunately the given basis for $\pvq$, as opposed to $\pfq$, does not prove Koszulness, since the no-loop exclusion corresponds to Gr\"obner basis elements of arbitrarily high degree (i.e. of degree equal to the length of the loop).  In subsection \ref{ProofOfKoszulness} we give an alternative basis for $\pvq$, from which the Koszulness of $\pvq$ can be deduced.
\label{RkReNonKoszulness}
\end{remark}

\subsubsection{The Infinitesimal Syzygies}
\label{sectionInfSyz}

One can readily compute that the isomorphism $\Diti$ acts on basis elements of ${\pvqii}^*$ as follows:\footnote{Instead of writing $\rij^{**}$ for generators of $\mathfrak{pvb}_n^{!*}$, we write $\rij$.}

\begin{align}
\Diti: \ & \rij\w \rjk \mapsto \yijk = [\rij,\rik] + [\rij,\rjk] + [\rik,\rjk] \label{yijk} \\
& \rij \w \rkl \mapsto \cijkl = [\rij,\rkl] \label{cijkl}
\end{align}
Indeed one can in fact view (Pruning A) and (Pruning V) as giving the only elements of $V^*$ that do not multiply freely in $\pvqii$, and then (since $\Diti$ is dual to the product in $\pvqii$) the above result is immediate.

As noted following (\ref{X31}) and (\ref{X32}), the map $(1 \negthinspace  \otimes \Diti) \circ \Ditii = (\Diti \negthinspace \otimes 1) \circ \Diiti$ is an isomorphism $A^{!3*} \overset{\sim}{\rightarrow} (\Rq \negthinspace  \otimes \negthinspace  V \cap V \negthinspace  \otimes \negthinspace  \Rq)$.  From (\ref{DResolution}) and the discussion thereafter, we see that the space

\begin{equation}
(\Diti \negthinspace \otimes 1) \circ \Diiti ({\pvqiii}^*) \oplus (-1)(1 \negthinspace  \otimes \Diti) \circ \Ditii ({\pvqiii}^*)
\label{X31minus32}
\end{equation}
gives us the degree 3 infinitesimal syzygies, which in turn generate all the infinitesimal syzygies.

It is easy to see that there are three types of basis element in ${\pvqiii}^*$, corresponding to three types of chain gang with three edges:
\begin{itemize}
\item $\rij\w\rjk\w\rkl$ with $i,j,k,l$ all distinct;
\item $\rij\w\rjk\w\rst$ with $i,j,k,s,t$ all distinct;
\item $\rij\w\rkl\w\rst$ with $i,j,k,l,s,t$ all distinct.
\end{itemize}

\newpage

We first deal with the first type of basis element.  We will show that in this case the first component of (\ref{X31minus32}) is given by:

\begin{align}
\label{RightSyzygy}
(\Diti \otimes 1) \circ \Diiti (\rij\w\rjk\w\rkl) & = -(\Diti \otimes 1)(\rij\w\rjk) \otimes (-\ril-\rjl-\rkl) \\
& + (\Diti \otimes 1)(\rij\w\rjl) \otimes (-\rik-\rjk+\rkl)  \notag \\
& -(\Diti \otimes 1)(\rik\w\rkl) \otimes (-\rij+\rjk+\rjl)  \notag \\
& + (\Diti \otimes 1)(\rjk\w\rkl) \otimes(\rij+\rik+\ril)  \notag \\
& - (\Diti \otimes 1)(\rij\w\rkl) \otimes (\rik+\ril+\rjk+\rjl)  \notag \\
& + (\Diti \otimes 1)(\rik\w\rjl) \otimes(\rij+\ril-\rjk+\rkl)  \notag \\
& + (\Diti \otimes 1)(\ril\w\rjk) \otimes(-\rij-\rik+\rjl+\rkl) \notag
\end{align}
We defer a more detailed justification of the above calculation to Subsection \ref{ProductCalcs}.  Now using (\ref{yijk}) and (\ref{cijkl}), we get:

\begin{align}
(\Diti \otimes 1) \circ \Diiti & (\rij\w\rjk\w\rkl)  = \label{rijkl} \\
& -\yijk \otimes (-\ril-\rjl-\rkl) + \yijl \otimes (-\rik-\rjk+\rkl) \notag \\
& -\yikl \otimes (-\rij+\rjk+\rjl) + \yjkl \otimes(\rij+\rik+\ril) \notag \\
& - \cijkl \otimes (\rik+\ril+\rjk+\rjl) + \cikjl \otimes(\rij+\ril-\rjk+\rkl) \notag \\
& - \ciljk \otimes(\rij+\rik-\rjl-\rkl) \notag
\end{align}

In (\ref{rijkl}) the tensor products are the tensor products in the tensor algebra $TV$, so we drop them.  $(1\otimes \Diti) \circ \Ditii (\rij\w\rjk\w\rkl)$ is the same, but with the tensor components flipped.

We will see that the resulting infinitesimal syzygies are induced (via the map $\FSyz$) from global syzygies in the next subsection.

This leaves the two remaining types of degree 3 basis element in ${\pvqiii}^*$.  It is fairly straightforward to compute that they correspond, respectively, to the relations:
\begin{equation*}
\yijk \rst = \rst \yijk
\end{equation*}
and
\begin{equation*}
\cijkl \rst = \rst \cijkl
\end{equation*}
which are clearly satisfied also at the global level.

\subsection{Global Syzygies and the PVH Criterion}

\subsubsection{The Global Syzygies}
\label{sectionGlobSyz}

We now display (a sum of) elements of $\Rg_{\geq 3}$ which specify a syzygy of $K=\Q\PV$ and which project via $\FSyz_3$ (see notation in Theorem \ref{HutchingsCriterion}) to the infinitesimal syzygy arising from (\ref{rijkl}) in $\pv$.  The syzygy corresponds to the standard syzygy in the braid group (i.e. the Zamolodchikov tetrahedron pictured in Subsection \ref{HutchingsUsefulness}), which can be written:

\begin{multline*}
Y_{jkl} R_{il} R_{ik} R_{ij} + R_{jk} R_{jl} Y_{ikl} R_{ij} + R_{jk} R_{jl} R_{ik} R_{il} C_{ij}^{kl} \\
+ R_{jk} C_{ik}^{jl} R_{il} R_{ij} R_{kl} + R_{jk} R_{ik} Y_{ijl} R_{kl} + Y_{ijk} R_{il} R_{jl} R_{kl} + R_{ij} R_{ik} C_{il}^{jk} R_{jl}R_{kl} \\
-R_{ij} R_{ik} R_{il} Y_{jkl} - R_{ij} Y_{ikl} R_{jl} R_{jk} - C_{ij}^{kl} R_{il} R_{ik} R_{jl} R_{jk} \\
-R_{kl} R_{ij} R_{il} C_{ik}^{jl} R_{jk} - R_{kl} Y_{ijl} R_{ik} R_{jk} - R_{kl} R_{jl} R_{il} Y_{ijk} - R_{kl} R_{jl} C_{il}^{jk} R_{ik} R_{ij}
\end{multline*}
where by abuse of notation we have used the same notation $\Yijk$ and $\Cijkl$ as in (\ref{relators1}) and (\ref{relators2}), but strictly speaking $\YF = \{\Yijk, \Cijkl\}$ is just a set of symbols which generate the relators in $K$ via the map $\delK$:
\begin{align}
\Yijk & \mapsto R_{ij} R_{ik} R_{jk} - R_{jk} R_{ik} R_{ij} \label{YandC} \\
{\Cijkl} & \mapsto R_{ij} R_{kl} - R_{kl} R_{ij} \notag
\end{align}

The calculation may be explained as follows.  The illustration in Subsection \ref{HutchingsUsefulness} shows 14 braids $\{B_i\}_{i=1,\dots, 14}$ around its perimeter.  These are linked by arrows labeled by various multiples of the $\Yijk$ and $\Cijkl$.  If we attach the labels $B_1, B_2, \dots$ starting at the bottom braid and proceeding clockwise around the perimeter, the arrows correspond to differences $(B_2-B_1), \dots, (B_8-B_7)$ up the left side of the diagram, and to differences $(B_{14}-B_1), \dots, (B_8-B_9)$ around the right side.  If we label the differences in accordance with (\ref{YandC}), we get the labeling in the picture. But clearly the telescopic sums on the left and right both give $B_8-B_1$, so we get a syzygy which we wrote down above.

This syzygy induces an infinitesimal syzygy which we obtain by substituting $R_{ij} \mapsto (\overline{R}_{ij} + 1)$ and dropping all but the lowest degree terms in the $\overline{R}_{ij}$ (this corresponds to applying the map $\FSyz_3$).  After reorganizing, we get the following element of $ker\ \delA$:

\begin{equation*}
Zam^R - Zam^L
\end{equation*}
where $Zam^R \in \YF \tX$, $Zam^L\in \tX \YF$, and

\begin{multline*}
Zam^R = Y_{jkl} (\bR_{ij}+ \bR_{ik}+ \bR_{il}) - Y_{ikl} (- \bR_{ij} + \bR_{jk} +\bR_{jl})  + Y_{ijl} (-\bR_{ik} -\bR_{jk} + \bR_{kl}) \\
- Y_{ijk} (-\bR_{il}- \bR_{jl}- \bR_{kl}) \\
- C_{ij}^{kl} (\bR_{ik}+ \bR_{il} + \bR_{jk}+ \bR_{jl}) + C_{ik}^{jl} (\bR_{ij} + \bR_{il} -\bR_{jk} + \bR_{kl}) \\
- C_{il}^{jk} (\bR_{ij}+ \bR_{ik}-  \bR_{jl}-  \bR_{kl})
\end{multline*}

and $Zam^L$ is the same except with the order of the products reversed.

Subject to identifying $T\tX$ and $TV$ (where $V=\Q\{\rij\}$) by means of the map $\bR_{ij} \mapsto \rij$, the map $\delA$ sends the $Y_{ijk}$ to the $\yijk$ in (\ref{6Trels}) and the $C_{ij}^{kl}$ to $\cijkl$ in (\ref{AlgCommutRels}).  By inspection, we see that

\begin{equation*}
(\delA (Zam^R),-\delA (Zam^L)) \in R\otimes V \oplus V\otimes R
\end{equation*}
coincides with the degree 3 infinitesimal syzygy in the space (\ref{X31minus32}) corresponding to (\ref{rijkl}).  Hence we have confirmed that all of the infinitesimal syzygies are covered by global syzygies.

\subsection{Proof of the Basis for $\pvq$}
\label{PFBasis}

We will follow the outline of the proof provided in Subsection \ref{sectionBasis}.

We will say that a pair of vertices in a forest graph is \pmb{unordered} if there is not an oriented sequence of edges from one of them to the other.  We define the \pmb{defect} of a tree as the number of unordered pairs of vertices in the graph, and the defect of a forest as the sum of the defects of its components.

Then chain gangs (unordered partitions of $[n]$ into ordered subsets) are exactly the forests with $0$ defect.  Moreover, in the pruning moves the A- and V-joins have defect 1, while the remaining terms have defect 0.

We will refer to a relation formed by adding to each of the terms in either (Pruning A) or (Pruning V) exactly the same additional edges, without ever forming a loop, as a \pmb{multiple} of the original relation.  Note that the graphs representing the multiple need not be connected.  The defect function has the following `multiplicativity' property on forests:

\begin{lemma}
In each multiple of either (Pruning A) or (Pruning V), the term which is built from the term in the original relation containing a join has defect strictly larger than the other terms.
\label{multiplicativity}
\end{lemma}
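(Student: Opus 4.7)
My plan is to reformulate the defect inequality as a strict inclusion of transitive closures and then to exploit the acyclicity hypothesis on all three multiples simultaneously. Write $T_0$ for the join term (the left-hand side of (Pruning V) or (Pruning A)) and $T_1, T_2$ for the two chain terms on the right; let $\{a,b\}$ denote the vertex pair that is unordered in $T_0$ on account of the join --- namely $\{j,k\}$ in the V-case and $\{i,j\}$ in the A-case. A direct inspection of the two pruning relations shows that each $T_r$, $r \in \{1,2\}$, shares one edge with $T_0$ and replaces the remaining $T_0$-edge by a two-step directed walk through $T_r$ (for instance, in the V-case $T_1 = \{i \to j,\ j \to k\}$ realises the missing $T_0$-edge $i \to k$ as the walk $i \to j \to k$). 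Moreover $T_1$ and $T_2$ contain the two opposite orientations of the edge between $a$ and $b$. Fix a common edge set $E$ for which $T_0 \cup E$, $T_1 \cup E$, $T_2 \cup E$ are all acyclic.

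The central step is the strict inclusion $\operatorname{TC}(T_0 \cup E) \subsetneq \operatorname{TC}(T_r \cup E)$ for both $r=1,2$, where $\operatorname{TC}$ denotes transitive closure. The weak inclusion follows by rerouting: substituting the two-step walk through $T_r$ whenever a $T_0$-edge outside $T_r$ is used transforms any directed path in $T_0 \cup E$ into a directed walk in $T_r \cup E$ with the same endpoints. Strictness comes from the pair $\{a,b\}$ itself. If there were a directed path $a \to \cdots \to b$ in $T_0 \cup E$, the weak inclusion would lift it to a directed walk in $T_s \cup E$, where $s \in \{1,2\}$ is chosen so that $T_s$ contains the edge $b \to a$; concatenation with $b \to a$ would then produce a cycle in $T_s \cup E$, contradicting the loop-free hypothesis on the multiple. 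A symmetric argument excludes a directed path from $b$ to $a$, so $\{a,b\}$ is unordered in $T_0 \cup E$ but ordered in each of $T_1 \cup E$ and $T_2 \cup E$, which makes the inclusion strict.

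To translate this into defects, observe that each of the three forests $T_0, T_1, T_2$ has $\{i,j,k\}$ as a single undirected component, so $T_0 \cup E$, $T_1 \cup E$, $T_2 \cup E$ induce the same partition of the vertex set into undirected components. Since the defect of a forest equals $\sum_C \binom{|C|}{2}$ minus the total number of reachable ordered pairs, and since the first term agrees across the three graphs while the reachable-pair count is strictly larger for the two chain-multiples, we obtain $\operatorname{def}(T_0 \cup E) > \operatorname{def}(T_r \cup E)$ for $r = 1,2$, as required. The subtle point, I expect, is the simultaneous use of the loop-free condition on both chain-multiples to rule out either possible ordering of $\{a,b\}$ in $T_0 \cup E$ --- this is where the hypothesis does all the work, and without it one direction could a priori slip in via a path through $E$.
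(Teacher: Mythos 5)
Your argument is correct and proves the lemma, but it gets there by a genuinely more direct route than the paper. Both proofs rest on the same two facts: every pair that is ordered in the join term is ordered in the two non-join terms, and the pair $\{a,b\}$ coming from the original join ($\{j,k\}$ for (Pruning V), $\{i,j\}$ for (Pruning A)) is ordered in the non-join terms but not in the join term. The paper establishes the first fact by induction on the edges added to form the multiple, with a three-case analysis (new edge with two new vertices, one old and one new vertex, two old vertices), whereas you obtain it in one stroke from the structural observation that each non-join term contains one of the two join-term edges and realizes the other as a directed two-step walk, so any directed path in the join multiple reroutes to a directed walk with the same endpoints in either sibling multiple. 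You also make explicit a point the paper only asserts: that $\{a,b\}$ remains unordered in the join multiple. Your derivation — a path $a\leadsto b$ would reroute into the sibling containing the reversed edge $b\to a$ and close up into a directed cycle, violating the no-loop condition on that term (and symmetrically for $b\leadsto a$) — is exactly the right use of the hypothesis, and it is valid since the multiples are forests, hence have no directed cycles. The final bookkeeping is also sound: all three terms of either pruning relation connect $\{i,j,k\}$ into a single undirected component, so the three multiples have identical component partitions, and since at most one direction of each pair is reachable in an acyclic graph, the identity defect $=\sum_C\binom{|C|}{2}$ minus the number of reachable pairs turns the strict inclusion of transitive closures into the strict defect inequality in the right direction. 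Net comparison: your rerouting argument is shorter and avoids both the induction and the case analysis, at the cost of leaning on the specific two-edge structure of the pruning relations; the paper's edge-by-edge induction is longer but verifies the reachability inclusion in a way that does not need to track how the missing edge is simulated.
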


The proof is deferred to the end of this subsection.

\begin{proof}[Proof of Theorem \ref{QuadDualBasis}]

We follow the plan of proof given following the statement of Theorem \ref{QuadDualBasis}.

The multiplicativity property of the defect makes clear that all forests can be expressed in terms of (sums of) chain gangs:  If a forest contains an A- or V-join, then using either (Pruning A) or (Pruning V) we can replace it with a sum of forests with strictly lower defect.  Iterating, we get a forest with 0 defect, i.e. a chain gang.

Now we show that chain gangs are linearly independent modulo the relations in $\pvq$.  The proof is a variation on the standard diamond lemma proof, which we briefly recall.  By a \pmb{reduction}, we mean specifically replacing the LHS of either pruning relation, or a multiple thereof, by the RHS.

We will show that reducing a defect to 0 will produce the same chain gangs regardless of the sequence of reductions chosen, by induction on the size of the defect.  This is clearly true when we start with a forest with defect 1, since there is only one way to reduce such a forest.

Suppose the claim is true for all forests of defect $\leq m$.  Let us consider a forest of defect $m+1$, and suppose there are two possible reductions, called (a) and (b).  Then applying either (a) or (b) gives a (sum of) new forests, which we call A and B respectively, each of defect $\leq m$.

Suppose (a) and (b) (or the pruning relations of which they are multiples) involve changes to pairs of edges that do not overlap.  Then it is still possible to apply reduction (a) to B, and reduction (b) to A.  Doing so, we obtain the same forest C of defect $\leq m-1$, since the result of applying non-overlapping reductions clearly does not depend on the order they are applied.

Alternatively, suppose (a) and (b) (or the pruning relations of which they are multiples) involve changes to pairs of edges that do overlap.  We will see that we can find further reductions (a'), (a'') and (b'), (b'') such that applying the sequence (a)-(a')-(a'') or (b)-(b')-(b'') leads to the same (sum of) forests C, of defect $\leq m-2$.\footnote{In fact the reductions (a') and (b') may really involve two reductions, applicable to different terms.}

Either way, we know by induction that all reduction sequences from A give the same results, and similarly for B, and since they have a common reduction sequence going through C, we see that A and B both give the same (sum of) forests of defect 0.  Hence all reductions of the original forest must give the same (sum of) chain gangs.

We now deal with the case where reductions (a) and (b) involve pairs of edges that overlap, and exhibit the reductions (a'), (a'') and (b'), (b'').  By inspection of the A- and V-joins, the following three types of overlap can arise (up to sign):

\[
\xy
(-30,0)*{\xy
(0,0)*{}="1";
(-6,5)*{}="2";
(0,5)*{}="3";
(6,5)*{}="4";
(0,-3)*{(X)};
{\ar "1";"2"}; {\ar "1";"3"}; {\ar "1";"4"};
\endxy}="1";
(0,0)*{%
\xy
(0,0)*{}="1";
(-6,-5)*{}="2";
(0,-5)*{}="3";
(6,-5)*{}="4";
(0,-8)*{(Y)};
{\ar "2";"1"}; {\ar "3";"1"}; {\ar "4";"1"};
\endxy}="2";
(30,0)*{%
\xy
(-5,-5)*{}="1";
(0,0)*{}="2";
(5,-5)*{}="3";
(10,0)*{}="4";
(0,-8)*{(Z)};
{\ar "1";"2"}; {\ar "3";"2"}; {\ar "3";"4"};
\endxy}="3";
\endxy
\]
In each case we have only shown the edges involved in the reductions.

Case (Z) is dealt with as follows (a star over a wedge $\overset{*}{\wedge}$ indicates the join which is being reduced - hence to make the following more legible we have dropped the ${}^*$ from elements $\rijq \in \pvq$):

\begin{align*}
\rij \sw \rkj \w \rkl & = \rik \w \rkj \sw \rkl + \rij \w \rki \sw \rkl \\
& = \rik \w \rkj \w \rjl - \rik \w \rkl \w \rlj - \rij \sw \ril \w \rki + \rkl \w \rli \w \rij \\
& = \rik \w \rkj \w \rjl - \rik \w \rkl \w \rlj - \rki\w\rij\w\rjl +\rki\w\ril\w\rlj \\
& \quad \quad \quad - \rkl \w \rli \w \rij
\end{align*}
while on the other hand
\begin{align*}
\rij \w \rkj \sw \rkl & = \rik\w\rkj\sw\rkl +\rki\sw\rkl\w\rij \\
& =  \rik\w\rkj\w\rjl -\rik\w\rkl\w\rlj + \rki\w\ril\sw\rij - \rkl\w\rli\w\rij\\
& = \rik\w\rkj\w\rjl - \rik\w\rkl\w\rlj + \rki\w\ril\w\rlj - \rki\w\rij\w\rjl  \\
& \quad \quad \quad - \rkl\w\rli\w\rij
\end{align*}

Since the results of the two calculations are the same, we see that regardless of which join we reduce first, there is a further sequence of reductions that leads to the same (signed sum of) trees, which is what we needed.\footnote{As per the previous footnote, note that reductions (a') and (b'), indicated by the stars in the RHS of the first lines, actually involve two reductions, applicable to separate terms.}

Cases (X) and (Y) are dealt with similarly - we simply note that

\begin{multline*}
\rij\sw\rik\w\ril = -\rij\w\rjl\w\rlk + \rij\w\rjk\w\rkl + \ril\w\rlj\w\rjk \\
+ \rik\w\rkl\w\rlj - \rik\w\rkj\w\rjl - \ril\w\rlk\w\rkj = \rij\w\rik\sw\ril
\end{multline*}
and
\begin{multline*}
\ril\sw\rjl\w\rkl = \rij\w\rjk\w\rkl - \rik\w\rkj\w\rjl +\rki\w\rij\w\rjl \\
- \rji\w\rik\w\rkl +\rjk\w\rki\w\ril - \rkj\w\rji\w\ril = \ril\w\rjl\sw\rkl
\end{multline*}
and leave the details of the calculations for the reader.

The next step in the proof is to show that all graphs with loops are $0$.  For loops containing a V-join, we can use (Pruning V) to reduce loops of length greater than 2 to (sums of) loops of shorter length:

\[
\xy
(-20,0)*{%
\xy
(-2,0)*{}="1";
(0,5)*{}="2";
(6,5)*{}="3";
(1,-5)*{}="4";
(11,2)*{}="5";
{\ar "2";"1"}; {\ar "2";"3"};
{\ar@{.} "1";"4"}; {\ar@{.} "3";"5"};
\endxy}="1";
(-10,0)*{=};
(0,0)*{%
\xy
(-2,0)*{}="1";
(0,5)*{}="2";
(6,5)*{}="3";
(1,-5)*{}="4";
(11,2)*{}="5";
{\ar "2";"1"}; {\ar "1";"3"};
{\ar@{.} "1";"4"}; {\ar@{.} "3";"5"};
\endxy}="2";
(10,0)*{-};
(20,0)*{%
\xy
(-2,0)*{}="1";
(0,5)*{}="2";
(6,5)*{}="3";
(1,-5)*{}="4";
(11,2)*{}="5";
{\ar "2";"3"}; {\ar "3";"1"};
{\ar@{.} "1";"4"}; {\ar@{.} "3";"5"};
\endxy}="3";
\endxy
\]

The case of loops containing an A-join, rather than a V-join, is similar. We continue reducing the length of the loops until we are down to loops of length 2, which are $0$ either by (No Loops) or by anti-commutativity.

Thus, if we follow the above procedure, we can reduce all loops to $0$.  It is also clear from the above that even if we followed a different sequence of pruning moves we would never reduce loops to a sum of diagrams including trees, since a pruning move can never break a loop.
\end{proof}

We have completed the proof of Theorem \ref{QuadDualBasis}, subject to proving multiplicativity of the defect function, i.e. Lemma \ref{multiplicativity}.  We do this now.

By a `\pmb{vertex in a relation}' we will mean a vertex which is an endpoint of at least one edge in the graphs corresponding to the terms of the relation.  It is fairly clear this is a well-defined notion (and in particular that the number of vertices in a relation is constant over all terms in a relation).

By the `\pmb{join term}' in a multiple of a pruning relation, we mean the term that was built by adding edges to the term in the original pruning relation which contained an A- or V-join.

\begin{proof}[Proof of Lemma \ref{multiplicativity}]
We proceed by induction on the number of vertices in a relation.  We claim that if (x) and (y) are vertices in the new relation, and there is a directed chain of edges from (x) to (y) in the join term, then there is also a directed chain of edges from (x) to (y) (in the same direction) in the other terms of the relation.  Hence:

\begin{enumerate}
\item When we form a multiple of (Pruning A) or (Pruning V) by adding edges, in that multiple each vertex is no more ordered (with respect to other vertices) in the join-term than in the non-join terms.
\item However, in each relation, there is at least one pair of vertices which is unordered in the join-term, but is ordered in the other terms, namely the unordered pair in the original pruning relation.
\end{enumerate}
So we can conclude that the join-term in the new relation must have strictly highest defect.

The above claim is easily verified in the original relations (Pruning A) and (Pruning V).  We now assume the claim has been proved whenever there are up to $m$ edges in a relation; we take a relation with $m$ edges and add a further edge. There are three cases.

\pmb{Case I: Two New Vertices.}  If the added edge forms a separate component in the new graphs, then clearly the defect is unchanged in all terms of the relation.

\pmb{Case II:  One Old, One New Vertex.}  So let us suppose that the added edge has one vertex (a) already in the relation, and one new vertex (b).  It is clear that the orderliness of pairs of vertices not including (b) is unchanged.

Now suppose that (c) is any other vertex in the relation.  If (b) and (c) are ordered in the new join term, say with a directed chain from (b) to (c), this chain must go through (a) since vertex (b) was not previously the endpoint of any edge.  Thus there was also a directed chain from (a) to (c) in the join-term of the old relation, hence by induction there were directed chains from (a) to (c) in the non-join terms in the old relation.  It follows that there is also a directed chain from (b) to (c) in the new non-join terms.  The case of a directed chain from (c) to (b) in the new join-term is similar.

If (b) and (c) are unordered in the new join term, there is nothing to prove.  Letting (c) range over all other vertices in the relation proves Case II.

\pmb{Case III:  Two Old Vertices.}  All that is left is to consider the case where the new edge links two existing vertices in the relation.  Because we assume that the added edge does not create a loop, it follows that the edge must be linking two formerly disconnected components of the graphs underlying the relation.  We assume the new edge links existing vertices (a) and (b).  It is clear that the orderliness of pairs of vertices already within the same component in the old relation is unchanged.  So we take (c) and (d) to be to be any two vertices in the component of (a) and (b) respectively. We can assume without loss of generality that either (a) $\ne$ (c) or (b) $\ne$ (d) (because if (a) = (c) and (b) = (d) then that pair is joined by the new edge and hence ordered in all terms of the relation).

The reasoning is similar to Case II.  If (c) and (d) are ordered in the new join term, say with a directed chain from (c) to (d), this chain must go through (a) and (b) since we assume there are no loops.  Thus the new edge must be oriented (a) to (b); moreover, there must also have been directed chains from (c) to (a) and from (b) to (d) in the join-term of the old relation.  By induction, there were directed chains from (c) to (a) and from (b) to (d) in the non-join terms in the old relation.  It follows that there is also a directed chain from (c) to (d) in the new non-join terms.  The case of a directed chain from (d) to (c) in the new join-term is similar.

Finally, if (c) and (d) are unordered in the new join term, there is nothing to prove.  Letting (c) and (d) range over all other vertices in the relation proves Case III.
\end{proof}

\subsection{Justification of the Co-Product Formulas}
\label{ProductCalcs}

We give here a summary of the action of the product $m^!: \pvqii \otimes V^* \rightarrow \pvqiii$ in terms of the `directed chains' basis for the respective spaces.  The verifications are routine and we will leave them to the reader.

\begin{align*}
\rijq\w\rjkq \otimes \rilq & \mapsto \rilq\w\rljq\w\rjkq - \rijq\w\rjlq\w\rlkq + \rijq\w\rjkq\w\rklq \\
\rijq\w\rjkq \otimes \rjlq & \mapsto -\rijq\w\rjlq\w\rlkq + \rijq\w\rjkq\w\rklq \\
\rijq\w\rjkq \otimes \rklq & \mapsto \rijq\w\rjkq\w\rklq \\
\rijq\w\rjkq \otimes \rliq & \mapsto \rliq\w\rijq\w\rjkq \\
\rijq\w\rjkq \otimes \rljq & \mapsto -\rilq\w\rljq\w\rjkq + \rliq\w\rijq\w\rjkq \\
\rijq\w\rjkq \otimes \rlkq & \mapsto \rijq\w\rjlq\w\rlkq - \rilq\w\rljq\w\rjkq + \rliq\w\rijq\w\rjkq
\end{align*}
and
\begin{align*}
\rijq\w\rklq \otimes \rikq & \mapsto -\rijq\w\rjkq\w\rklq + \rikq\w\rkjq\w\rjlq - \rikq\w\rklq\w\rljq   \\
\rijq\w\rklq \otimes \rkiq & \mapsto \rklq\w\rliq\w\rijq - \rkiq\w\rilq\w\rljq + \rkiq\w\rijq\w\rjlq   \\
\rijq\w\rklq \otimes \rilq & \mapsto -\rijq\w\rjkq\w\rklq + \rikq\w\rkjq\w\rjlq - \rikq\w\rklq\w\rljq   \\
& -  \rkiq\w\rijq\w\rjlq + \rkiq\w\rilq\w\rljq\\
\rijq\w\rklq \otimes \rliq & \mapsto \rklq\w\rliq\w\rijq  \\
\rijq\w\rklq \otimes \rjkq & \mapsto -\rijq\w\rjkq\w\rklq  \\
\rijq\w\rklq \otimes \rkjq & \mapsto \rklq\w\rliq\w\rijq - \rkiq\w\rilq\w\rljq + \rkiq\w\rijq\w\rjlq  \\
& + \rikq\w\rklq\w\rljq - \rikq\w\rkjq\w\rjlq \\
\rijq\w\rklq \otimes \rjlq & \mapsto -\rijq\w\rjkq\w\rklq + \rikq\w\rkjq\w\rjlq - \rkiq\w\rijq\w\rjlq  \\
\rijq\w\rklq \otimes \rljq & \mapsto \rklq\w\rliq\w\rijq - \rkiq\w\rilq\w\rljq + \rikq\w\rklq\w\rljq \\
\end{align*}

We leave to the reader the routine but tedious verification that the formula (\ref{RightSyzygy}) follows.

\subsection{Proof of the Koszulness of $\pvq$}
\label{ProofOfKoszulness}

As indicated in Remark \ref{RkReNonKoszulness}, the basis given in Theorem \ref{QuadDualBasis} will not in itself lead to a proof of Koszulness because the explicit exclusion of monomials whose graphical representation contains a loop corresponds to Gr\"obner basis elements of arbitrarily high degree.  In contrast, standard theorems on Gr\"obner bases only tell us that (under mild assumptions) algebras with quadratic Gr\"obner bases are Koszul.

So we will exhibit a different basis for $\pvq$, consisting of all monomials not containing certain length two subwords, which corresponds to the specification of a quadratic Gr\"obner basis for $\pvq$.  We will see that, by a result of Yuzvinsky \cite{Yuz} (see also \cite{ShelYuz}), $\pvq$(and hence also $\pv$) is Koszul.

To begin with, given any finite subset $I \subseteq \N$ (which we order numerically), we will define two kinds of graph with vertices indexed by $I$ - we will call these Down graphs and Up graphs.  We will then show how to combine Down and Up graphs to get graphs (which we will call Up-Down graphs) which correspond to a new basis for $\pvq$, of the desired form (i.e. corresponding to the specification of a quadratic Gr\"obner basis for $\pvq$).

We will also see that the Down and Up graphs, respectively, catalogue bases for:

\begin{itemize}
\item the algebra $\pfq$, which is quadratic dual to the quadratic approximation for the pure flat braid group $\PfB$ ( i.e. the quadratic dual to the universal enveloping algebra of the triangular Lie algebra $\mathfrak{tr_n}$ in \cite{BEER}); and
\item the algebra $\pbq$, quadratic dual to the quadratic approximation for the pure braid group $\PB$.
\end{itemize}
However, we do not have a coherent explanation for why bases for $\pfq$ and $\pbq$ should fit together in this way to produce bases of $\pvq$.  See Remark \ref{RemarkPVBasProduct} below.

\subsubsection{Down Graphs and $\pfq$}

A \pmb{Down tree} on the index set $I = \{i_1, \dots ,i_m \} \subseteq \N$ (with smallest index $i_1$) consists of a `tuft' of directed edges $\{(i_2,i_1), \dots, (i_m,i_1) \}$.  (The graph is non-planar in that all orderings of the edges incident to a particular vertex are considered equivalent.)  This corresponds to allowing all trees built with directed edges with \emph{decreasing} indices (i.e. edges $(i,j)$ with $i>j$) by

\begin{itemize}
\item allowing the following subgraphs:

\[
\xy
(0,0)*{}="1";
(-5,5)*{}="2";
(5,10)*{}="3";
{\ar "2";"1"}; {\ar "3";"1"};
\endxy
\]

\item excluding the following three subgraphs:

\[
\xy
(-20,0)*{
\xy
(0,10)*{}="1";
(0,5)*{}="2";
(0,0)*{}="3";
{\ar "1";"2"}; {\ar "2";"3"};
\endxy};
(0,0)*{
\xy
(-1,0)*{}="1";
(-1,-10)*{}="2";
(1,0)*{}="3";
(1,-10)*{}="4";
{\ar "1";"2"};{\ar "3";"4"};
\endxy};
(20,0)*{
\xy
(0,0)*{}="1";
(-5,-5)*{}="2";
(5,-10)*{}="3";
{\ar "1";"2"}; {\ar "1";"3"};
\endxy};
\endxy
\]
\end{itemize}
where in all cases the relative heights of the endpoints indicate the relative ordering of the indices (in particular, the middle subgraph has a doubled edge: $\{(i,j),(i,j)\}$).  We declare by way of convention that a Down tree on on index set with one element is the empty graph.  An example of a Down graph is the following:

\[
\xy
(-20,0)*{
\xy
(0,0)*{}="1";
(-10,15)*{}="2";
(0,10)*{}="3";
(10,5)*{}="4";
{\ar "2";"1"};{\ar "3";"1"}; {\ar "4";"1"};
\endxy};
(0,0)*{\text{i.e.}};
(20,0)*{\xy
(0,0)*{i_1}="1";
(-10,15)*{i_2}="2";
(0,10)*{i_3}="3";
(10,5)*{i_4}="4";
{\ar "2";"1"};{\ar "3";"1"}; {\ar "4";"1"};
\endxy};
\endxy
\]
where $i_2>i_3>i_4>i_1$.

Note that because of the last two types of excluded graph, we needn't have explicitly restricted ourselves to trees, as these exclusions prevent the formation of (ordered or unordered) loops in the graph  (recall also that Down graphs are built only with directed edges with decreasing indices).  Thus we have an exclusion rule, of degree 2 in the number of edges, which effectively eliminates loops.  In particular the obstacle to proving Koszulness due to the presence of non-quadratic Gr\"obner basis elements no longer arises.

We now define a Down forest on an index set $I$ partitioned as $I=S_1 \sqcup \dots \sqcup S_u$ to be the union of the Down trees on the subsets $S_i$.

\begin{remark} The monomials corresponding to Down forests induced by unordered partitions of $[n]=\{1,2, \dots, n\}$ (using the correspondence explained in subsection \ref{sectionBasis}) form a basis for the algebra $\pfq$ (as a skew-commutative algebra \footnote{See \cite{Mikha} for more on such bases.}).  Indeed, it is easy to see that Down forests are in bijective correspondence with the `reduced monomials with disjoint supports' which were proved in \cite{BEER}, Proposition 4.2, to form a basis of $\pfq = U(\mathfrak{tr_n})^!$ (with the minor difference that the edges in \cite{BEER} had increasing indices).  Also, the above excluded subgraphs correspond to the excluded monomials implied by the Gr\"obner basis given in \cite{BEER}, Corollary 4.3, for $U(\mathfrak{tr_n})^!$ (subject to always writing generators with increasing indices, using the relation $r_{i,j}=-r_{j,i}$).  The fact that these Gr\"obner basis elements are quadratic allowed \cite{BEER} to conclude that $\pfq$ is Koszul.
\label{Rkpfbbasis}
\end{remark}

\subsubsection{Up Graphs and $\pbq$}

An \pmb{Up tree} on the index set $I = \{i_1, \dots ,i_m \} \subseteq \N$ (with $i_1 < \dots < i_m$) consists of all trees built with directed edges with \emph{increasing} indices by

\begin{itemize}
\item allowing the following two subgraphs:

\[
\xy
(-15,0)*{
\xy
(0,0)*{}="1";
(-5,5)*{}="2";
(5,10)*{}="3";
{\ar "1";"2"}; {\ar "1";"3"};
\endxy};
(15,0)*{
\xy
(0,11)*{}="1";
(0,6)*{}="2";
(0,0)*{}="3";
{\ar "2";"1"}; {\ar "3";"2"};
\endxy};
\endxy
\]

\item excluding the following two subgraphs:

\[
\xy
(-15,0)*{
\xy
(0,0)*{}="1";
(-5,-5)*{}="2";
(5,-10)*{}="3";
{\ar "2";"1"}; {\ar "3";"1"};
\endxy};
(15,0)*{
\xy
(-1,0)*{}="1";
(-1,-10)*{}="2";
(1,0)*{}="3";
(1,-10)*{}="4";
{\ar "2";"1"};{\ar "4";"3"};
\endxy};
\endxy
\]
\end{itemize}
where, again, in all cases the relative heights of the endpoints indicate relative ordering of the indices. Furthermore, the graphs are again non-planar in that all orderings of the edges incident to a particular vertex are considered equivalent; also, we again declare by way of convention that a Up tree on an index set with one element is the empty graph.  An example of an Up tree is the following:

\[
\xy
(-25,0)*{
\xy
(0,0)*{}="1";
(-10,15)*{}="2";
(0,10)*{}="3";
(10,5)*{}="4";
(-15,25)*{}="5";
(-5,20)*{}="6";
(15,30)*{}="7";
{\ar "1";"2"};{\ar "1";"3"}; {\ar "1";"4"};{\ar "2";"5"};{\ar "2";"6"};{\ar "4";"7"};
\endxy};
(0,0)*{\text{i.e.}};
(25,0)*{
\xy
(0,0)*{i_1}="1";
(-10,15)*{i_4}="2";
(0,10)*{i_3}="3";
(10,5)*{i_2}="4";
(-15,25)*{i_6}="5";
(-5,20)*{i_5}="6";
(15,30)*{i_7}="7";
{\ar "1";"2"};{\ar "1";"3"}; {\ar "1";"4"};{\ar "2";"5"};{\ar "2";"6"};{\ar "4";"7"};
\endxy};
\endxy
\]
where $i_1< \dots <i_7$.

As with Down graphs we needn't have explicitly restricted ourselves to trees, since one effect of the excluded subgraphs is to prevent the formation of (ordered or unordered) loops in the graph.  Again, the obstacle to proving Koszulness due to the presence of non-quadratic Gr\"obner basis elements has been avoided.

We now define an Up forest as a union of Up trees (with disjoint index sets).

\begin{proposition}
The Up trees on a given index set $I$ with $m$ elements (in which all indices belong to at least one edge) are in bijective correspondence with the cyclic orderings of $[m]=\{1, \dots, m\}$, or equivalently the orderings of $I$ starting with the smallest index.  This number is clearly $(m-1)!$.
\end{proposition}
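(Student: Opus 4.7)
The plan is to use the exclusion rules to identify Up trees with the classical notion of \emph{increasing rooted trees}, then apply a standard recursion.

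First, I will pin down the structure. The no-A-join exclusion forces every vertex of an Up tree to have in-degree at most $1$. Since the underlying graph is a tree on $m$ vertices with $m - 1$ edges, the in-degrees sum to $m - 1$, so exactly one vertex has in-degree $0$. Because every edge points from a smaller to a larger index, this vertex must be $i_1 = \min I$; every other vertex has in-degree exactly $1$, with parent strictly smaller than itself. Hence an Up tree on $I$ is precisely a rooted tree on $I$, rooted at $i_1$, with edges directed away from the root and labels strictly increasing along every root-to-leaf path. (The doubled-edge exclusion is automatic in a simple tree.)

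Second, I will count by induction on $m$. Let $N(m)$ be the number of Up trees on an $m$-element index set covering all indices. The largest element $i_m$ has no outgoing edge (no larger label exists) and, by the structural step, a unique incoming edge from some $p \in I \setminus \{i_m\}$. Deleting $i_m$ and its incident edge produces an Up tree on $I \setminus \{i_m\}$; conversely, any Up tree on $I \setminus \{i_m\}$ together with any choice of $p \in I \setminus \{i_m\}$ determines an Up tree on $I$ by adjoining the edge $p \to i_m$. This yields $N(m) = (m-1)\cdot N(m-1)$, and with $N(1) = 1$ we conclude $N(m) = (m-1)!$.

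Third, I will exhibit an explicit bijection with orderings of $I$ starting with the minimum, which matches the cyclic orderings of $[m]$. Given such an ordering $\sigma = (i_1, j_2, \ldots, j_m)$, define $T(\sigma)$ by declaring, for each $k \geq 2$, the parent of $j_k$ to be $j_\ell$, where $\ell$ is the largest index less than $k$ for which $j_\ell < j_k$ (such $\ell$ exists because $j_1 = i_1$ qualifies). By construction $T(\sigma)$ has increasing labels along root-to-leaf paths and so is an Up tree. Since both sets have the same cardinality $(m-1)!$, it suffices to check injectivity, which I will do by recovering $\sigma$ from $T(\sigma)$ as the order in which vertices are visited by depth-first search starting at $i_1$ with the children of each vertex visited in \emph{decreasing} order of label. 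The main obstacle is the opening structural observation: once it is made clear that the exclusion rules force an Up tree to be an increasing rooted tree at $\min I$, the remainder is a routine induction together with a short verification of the DFS inverse.
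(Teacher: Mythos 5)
Your proof is correct, and at its core it realizes the same correspondence as the paper: both arguments identify Up trees with increasing (``recursive'') rooted trees rooted at $\min I$, and your depth-first traversal with children visited in decreasing label order is exactly the paper's clockwise walk around the thickened (ribbon) tree with children drawn in increasing size toward the left -- both read off an ordering of $I$ starting with the smallest index. Where you differ is in the packaging, and your version is more self-contained: you actually prove the structural identification (the no-A-join rule forces in-degree at most one, so the unique source is $\min I$), you replace the paper's appeal to the ``classical result'' on recursive trees by an independent count via the recursion $N(m)=(m-1)N(m-1)$ obtained by deleting the largest label, and you build the explicit map in the opposite direction (ordering $\mapsto$ tree via the nearest-previous-smaller-element rule), concluding bijectivity from injectivity plus the equality of cardinalities rather than by exhibiting the traversal directly as a two-sided inverse. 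The paper's route is shorter because it leans on the known enumeration of recursive trees and asserts the contour-walk bijection; yours costs a little more writing but needs no outside input and makes the inverse pair (nearest previous smaller element versus decreasing-order DFS) explicit, which is a pleasant bonus. The only step you leave as a promissory note -- that the DFS recovers $\sigma$ from $T(\sigma)$ -- is indeed routine: the children of the root are the left-to-right minima of the tail of $\sigma$, they occur in decreasing order, and each carries the block of entries up to the next such minimum, so induction on blocks closes the argument.
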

\begin{proof}
It is fairly easy to see that Up trees are what is called `recursive' - i.e. non-planar rooted trees with vertices labeled by distinct numbers, where the labels are strictly increasing as move in the direction of the arrows. It is a classical result that there are $(m-1)!$ of these on an index set of size $m$.  One way to see it is to place the root at the bottom of the picture with the edges pointing up, and order the children of each node by increasing size toward the left (we can do this since the trees are non-planar, i.e. the children of each node are unordered): see the sample Up tree above.  Now thicken all the edges into ribbons (which are kept flat to the plane with no twisting).  Finally, starting at the root go along the outside edge of the ribbon graph in a clockwise direction writing down each index the first time it is reached.  The result is an ordering of the $m$ indices starting with the smallest (in the case of the sample Up tree above we get $(i_1,i_4,i_6,i_5,i_3,i_2,i_7$), and there are clearly $(m-1)!$ of these.  It is easy to see that this procedure gives the required bijection.
\end{proof}
\begin{corollary}  The Up forests on an index set $I$ are in bijective correspondence with the unordered partitions of $I$ into cyclically ordered subsets.
\end{corollary}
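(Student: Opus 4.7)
The plan is to reduce the corollary directly to the preceding proposition by decomposing an Up forest into its connected components. By the definition given just above the corollary, an Up forest on $I$ is a union of Up trees on disjoint index subsets whose union is $I$, where a one-element index set is assigned the empty Up tree by the stated convention. Thus specifying an Up forest on $I$ is the same as specifying

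\begin{enumerate}
\item an unordered partition $I = S_1 \sqcup S_2 \sqcup \cdots \sqcup S_u$, and
\item for each $j$, an Up tree on $S_j$ using all indices of $S_j$.
\end{enumerate}

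First I would make this decomposition precise: given an Up forest, let the $S_j$ be the vertex sets of its connected components, where an isolated index $i \in I$ contributes a singleton $\{i\}$ on which the component is the empty Up tree. Conversely, given data as in (1) and (2), the disjoint union of the component Up trees is manifestly an Up forest on $I$, and these two operations are mutually inverse. This step is essentially bookkeeping and should not cause any difficulty.

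Next I would invoke the proposition to replace the Up-tree data in (2) by cyclic orderings. For each $S_j$ with $|S_j| \geq 2$, the proposition provides a bijection between Up trees on $S_j$ using all indices and cyclic orderings of $S_j$. For $|S_j|=1$, the (unique) empty Up tree on $S_j$ matches the unique cyclic ordering of a one-element set, so the bijection extends uniformly to all subsets. Composing component-wise, the Up-forest data is thus the same as an unordered partition of $I$ together with a cyclic ordering of each block, which is exactly an unordered partition of $I$ into cyclically ordered subsets.

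The only point that requires any care is keeping the conventions consistent on singleton blocks, so that the empty-tree convention lines up with the trivial cyclic ordering on a one-element set; once that is settled the corollary follows immediately from the proposition applied block by block. I do not anticipate any genuine obstacle beyond this bookkeeping, since all of the substantive combinatorics (counting recursive trees by ribbon-traversal and obtaining the $(|S_j|-1)!$ cyclic orderings per block) has already been carried out in the proof of the proposition.
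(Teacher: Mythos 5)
Your proposal is correct and matches the paper's (implicit) argument: the corollary is stated as an immediate consequence of the proposition, obtained exactly as you do by decomposing an Up forest into its component Up trees and applying the proposition block by block, with singleton blocks handled by the empty-tree convention. No gap here.
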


\begin{remark} The monomials corresponding to Up forests induced by unordered partitions of $[n]=\{1,2, \dots, n\}$) into cyclically ordered subsets form a basis for the algebra $\pbq$.  Indeed, it is easy to see that Up forests are in bijective correspondence with the basis elements for $\pbq$ given in \cite{Yuz}, see also \cite{Arnold} and \cite{ShelYuz}.  Also, the above excluded subgraphs correspond to the excluded monomials implied by the Gr\"obner basis given in \cite{Yuz}.  The fact that these Gr\"obner basis elements are quadratic allowed \cite{ShelYuz} to conclude that $\pbq$ is Koszul.
\label{Rkpbbasis}
\end{remark}

\subsubsection{Up-Down Graphs and $\pvq$}

To define Up-Down graphs we first need the concept of an ordered 2-step partition (essentially due to \cite{BEER}\footnote{See the proof of Corollary 4.6, (iii).  Our \emph{ordered} 2-step partitions differ from their `2-step partitions' in that our underlying sets $S_i$ are cyclically ordered and theirs are unordered.}).  Namely given $n\in \N$ and $[n]:=\{1,2,\dots,n\}$, first take an unordered partition of $[n]$ as $[n]=S_1 \sqcup \dots \sqcup S_l$ where the sets $S_i$ are cyclically ordered (and let $m_i$ denote the minimal element of $S_i$).  Second, take an unordered partition of the set $\mathcal{M}:=\{m_i: i=1, \dots, l\}$ of minimal elements into distinct unordered subsets, $\mathcal{M}=M_1 \sqcup \dots \sqcup M_k$.

Now suppose given such an ordered 2-step partition of $[n]$.  First we form the Down forest on the index set $\mathcal{M}$ with the given partition.  Next we form the Up tree on each of the (cyclically ordered) sets $S_i$.  The resulting graph is called an \pmb{Up-Down forest} on the index set $[n]$.  An Up-Down forest is uniquely determined by a particular ordered 2-step partition, and conversely.

\begin{theorem}
\label{ThmKoszulBasis}
The monomials corresponding to Up-Down graphs on ordered 2-step partitions of $[n]=\{1,\dots ,n\}$ form a basis of $\pvq$.
\end{theorem}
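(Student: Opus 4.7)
The plan is to identify the Up-Down forests as the standard monomials with respect to a quadratic Gr\"obner basis on $\pvq$, in the same spirit as Theorem \ref{QuadDualBasis} but with a different set of excluded subwords that, this time, has only quadratic leading terms. First I would fix a monomial ordering on the free skew-commutative algebra on $\{\rijq\}$; the natural choice is a degree-lexicographic ordering in which the comparison of generators $\rijq$ distinguishes increasing-index generators from decreasing-index generators so that the ``preferred'' form of each relation (vrel), (arel), (qAS) after applying skew-commutativity produces leading monomials matching the excluded length-$2$ subgraphs in the definitions of Down trees and Up trees.

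Next I would verify that the excluded subwords, taken together, characterize the Up-Down forests exactly: every graph avoiding these length-$2$ patterns decomposes uniquely as a Down forest on the set $\mathcal{M}$ of minimal elements of an unordered partition of $[n]$ into cyclically ordered blocks $S_1,\dots,S_l$, together with an Up tree on each $S_i$. This should match the definition of an ordered $2$-step partition and give a bijection between forbidden-subword-free monomials and Up-Down forests. The three ingredients to check here are that the decreasing-index edges form a Down forest on $\mathcal{M}$ (handled by Remark \ref{Rkpfbbasis}), that the increasing-index edges within each block $S_i$ form an Up tree (handled by Remark \ref{Rkpbbasis}), and that no excluded quadratic mixing occurs at the interface between the two types of edge.

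Having identified the excluded subwords, I would conclude by the standard Diamond Lemma / quadratic Gr\"obner basis argument. The spanning statement already follows from the reduction algorithm used in the proof of Theorem \ref{QuadDualBasis}: iterating (Pruning A), (Pruning V), (No Loop) and skew-commutativity allows any monomial to be rewritten as a combination of Up-Down forests. Linear independence can then be obtained in either of two ways: by checking that every degree-$3$ overlap ambiguity among the leading monomials resolves (Diamond Lemma), or, more economically, by counting: since Corollary \ref{LahBasis} gives $\dim [\pvq]_k = L(n,n-k)$, it suffices to show that the number of Up-Down forests on $[n]$ with $k$ edges equals $L(n,n-k)$. Summing over ordered $2$-step partitions $[n]=S_1\sqcup\cdots\sqcup S_l$ with $\mathcal{M}=M_1\sqcup\cdots\sqcup M_k$, using $|M_i|{-}1$ edges on each Down component and $|S_i|{-}1$ edges on each Up component, and invoking the classical recursion for Lah numbers, should yield the desired identity.

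The main obstacle will be the ambiguity resolution (or, equivalently, the counting match at the Down/Up interface). The interplay between decreasing-index edges feeding into minimal elements of blocks and increasing-index edges building those blocks means that a handful of overlap cases straddle both edge types and must be checked by mixed applications of (Pruning A) and (Pruning V); these are the essential new cases beyond those treated in \cite{BEER} for $\pfq$ and in \cite{Yuz} for $\pbq$. Reducing to the pure counting check via $L(n,n-k)$ is probably the cleanest route and avoids the combinatorial bookkeeping of the Diamond Lemma entirely, at the mild cost of not producing the Gr\"obner basis explicitly (which, however, is unnecessary here since Koszulness would then follow by Yuzvinsky's theorem exactly as in Remarks \ref{Rkpfbbasis} and \ref{Rkpbbasis}).
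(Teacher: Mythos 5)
Your proposal is essentially the paper's own argument: rewrite the relations (as in (\ref{Gbasis1})--(\ref{Gbasis6}) together with (\ref{qAS})) so that each has a distinct quadratic leading term, identify the monomials avoiding these excluded length-2 subwords with the Up-Down forests, prove spanning by reduction with respect to the multiplicative lexicographic order, and prove linear independence by the count $\sum_l s(n,l)\,S(l,k)=L(n,k)$ compared against Corollary \ref{LahBasis} --- exactly the route of Proposition \ref{PropKoszulBasis}. One small correction: spanning does not follow from the pruning reduction of Theorem \ref{QuadDualBasis} (that algorithm terminates in chain gangs, not Up-Down forests); it follows from repeatedly eliminating the new excluded quadratic subwords via the rewritten relations, which your Gr\"obner setup already supplies.
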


This theorem will follow from the following:

\begin{proposition}
\label{PropKoszulBasis}
The algebra $\pvq$ has a basis consisting of all monomials whose graph does not contain any of the following as subgraphs (again the relative heights of the endpoints indicate relative ordering of the indices, and the graphs are non-planar, so that the all edges (incoming or outgoing) incident to a particular vertex may be represented in any order without changing the graph):

\[
\xy
(0,0)*{
\xy
(-1,0)*{}="1";
(-1,-10)*{}="2";
(1,0)*{}="3";
(1,-10)*{}="4";
{\ar "2";"1"};{\ar "4";"3"};
\endxy};
(30,0)*{
\xy
(0,0)*{}="1";
(-5,-5)*{}="2";
(5,-10)*{}="3";
{\ar "2";"1"}; {\ar "3";"1"};
\endxy};
\endxy
\]

\[
\xy
(-20,0)*{
\xy
(-5,-10)*{}="1";
(0,0)*{}="2";
(5,-5)*{}="3";
{\ar "1";"2"}; {\ar "2";"3"};
\endxy};
(0,0)*{
\xy
(-5,-5)*{}="1";
(0,0)*{}="2";
(5,-10)*{}="3";
{\ar "1";"2"}; {\ar "2";"3"};
\endxy};
(20,0)*{
\xy
(0,0)*{}="1";
(0,-5)*{}="2";
(0,-10)*{}="3";
{\ar "1";"2"}; {\ar "3";"2"};
\endxy};
(40,0)*{
\xy
(-1,0)*{}="1";
(-1,-10)*{}="2";
(1,0)*{}="3";
(1,-10)*{}="4";
{\ar "2";"1"};{\ar "3";"4"};
\endxy};\endxy
\]

\[
\xy
(-20,0)*{
\xy
(-1,0)*{}="1";
(-1,-10)*{}="2";
(1,0)*{}="3";
(1,-10)*{}="4";
{\ar "1";"2"};{\ar "3";"4"};
\endxy};
(0,0)*{
\xy
(0,10)*{}="1";
(0,5)*{}="2";
(0,0)*{}="3";
{\ar "1";"2"}; {\ar "2";"3"};
\endxy};
(20,0)*{
\xy
(0,0)*{}="1";
(-5,-5)*{}="2";
(5,-10)*{}="3";
{\ar "1";"2"}; {\ar "1";"3"};
\endxy};
\endxy
\]
\end{proposition}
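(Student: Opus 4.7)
The plan is to realize the list of forbidden subgraphs as the leading monomials of a quadratic Gr\"obner basis for the ideal of relations defining $\pvq$, and to deduce the proposition from the diamond lemma. Theorem \ref{ThmKoszulBasis} then follows by matching this combinatorial basis with the Up-Down forest description, and Koszulness of $\pvq$ (and hence of $\pv$) is immediate from Yuzvinsky's theorem \cite{Yuz}.

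First I would fix a total order on the generators $\{\rijq\}$, for instance lexicographic on the pairs $(i,j)$, and extend it to a degree-lexicographic order on monomials of $TV^*$. The choice must be made so that each displayed forbidden subgraph corresponds to the leading length-two monomial of a quadratic polynomial in the ideal of relations. These polynomials are built from (\ref{vrel}), (\ref{arel}), (\ref{qAS}), the skew-commutativity $\rijq \wedge \rklq = -\rklq \wedge \rijq$ (applied whenever neither (\ref{vrel}) nor (\ref{arel}) applies), and their Buchberger-style consequences. Intuitively, the first row of exclusions catalogues leading terms from the parallel-edge relations and the A-join pruning; the second row encodes mixed-orientation consequences of overlapping relations; and the third row encodes the V-join pruning together with new length-two loop-forbidding reductions. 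The key point is that these reductions, unlike the ones appearing in the proof of Theorem \ref{QuadDualBasis}, involve only quadratic leading monomials, so no Gr\"obner basis element of higher degree is forced.

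Next I would verify confluence by enumerating length-three overlap ambiguities. For each way two forbidden subgraphs can share a single generator to form a length-three graph, one computes both reduction sequences and checks they yield the same normal form. This is analogous to the overlap analysis (X), (Y), (Z) in the proof of Theorem \ref{QuadDualBasis}, but the list of overlaps to check is longer because of the additional length-two loop exclusions and the skew-commutativity reductions; the verification is still a finite computation. By the diamond lemma, confluence at length three suffices, and so the monomials of $TV^*$ whose graphs contain none of the listed length-two subgraphs descend to a $\Q$-basis of $\pvq$. Theorem \ref{ThmKoszulBasis} then follows by the routine combinatorial check that Up-Down forests on ordered 2-step partitions of $[n]$ are exactly the graphs avoiding all listed subgraphs.

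The main obstacle is simultaneously finding a suitable monomial order and confirming confluence under it. The basis of Theorem \ref{QuadDualBasis} fails to be a Gr\"obner basis precisely because its `no loops of any length' exclusion forces Gr\"obner basis elements of arbitrarily high degree; the delicate point here is that the Up-Down formulation replaces this global condition with purely length-two exclusions, and one must check that the resulting quadratic rewriting system is actually confluent. Once that is done, Koszulness of $\pvq$ follows automatically from \cite{Yuz}, closing the proof.
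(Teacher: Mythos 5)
Your overall strategy is viable, but it diverges from the paper at the crucial linear-independence step, and the part you defer is exactly the part the paper is engineered to avoid. Both you and the paper begin the same way: order the generators, identify the nine excluded subgraphs with a set $\Eii$ of illegal quadratic monomials, and show by a straightforward rewriting argument (using multiplicativity of the ordering) that the monomials avoiding $\Eii$ span $\pvq$. To get spanning one must first re-present the relations so that each has a \emph{distinct} leading term -- the paper's equations (\ref{Gbasis1})--(\ref{Gbasis6}) together with (\ref{qAS}) -- since the defining relations (\ref{vrel}), (\ref{arel}), (\ref{qAS}) as written share maximal terms; your proposal gestures at this (``the choice must be made so that \dots'') but does not produce the explicit system. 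Where you then genuinely differ is in proving independence: you propose to resolve all degree-3 overlap ambiguities and invoke the diamond lemma (in the skew-commutative setting of \cite{Mikha}), whereas the paper never checks confluence at all. Instead it counts: Up-Down graphs of degree $n-k$ number $\sum_l s(n,l)S(l,k)=L(n,k)$ by the Lah--Stirling identity, and this equals $\dim \mathfrak{pvb}_n^{!\,(n-k)}$ by Corollary \ref{LahBasis}, which rests on the chain-gang basis of Theorem \ref{QuadDualBasis}. Since the legal monomials span and their number matches the dimension, they form a basis, and the Gr\"obner property (Corollary \ref{GrobnerBasis}) comes out as a consequence rather than an input. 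Your route buys self-containedness -- it would not need Theorem \ref{QuadDualBasis} or the Lah-number count -- and it establishes the quadratic Gr\"obner basis directly; the paper's route buys economy, trading a large case analysis for one identity of Stirling-type numbers.

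The caveat is that, as written, your proposal does not actually prove the proposition: the confluence verification is the entire mathematical content of your approach, and you only assert that it is ``a finite computation.'' The overlaps involve three and four distinct indices, include the ambiguities generated by the skew-commutativity reductions and the loop-killing rules $\rijq\wedge\rjiq$ and $\rijq\wedge\rijq$, and must all be checked against a single explicitly chosen order; none of this is exhibited. (Because the paper independently shows the irreducible monomials form a basis, Bergman's diamond lemma guarantees a posteriori that all these ambiguities are resolvable, so your computation would succeed -- but that reasoning is not available to you without the paper's counting argument, so in a self-contained write-up you would have to carry the overlap check out in full.)
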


Note that the first row consists exactly of the excluded subgraphs for Up graphs.  Thus if, in a graph corresponding to a basis monomial for $\pvq$, we look only at the subgraph of upward arrows, we see that this subgraph must be an Up graph (and all Up graphs may arise).

The second row of excluded subgraphs features `mixed' subgraphs, in that they each involve both an up arrow and a down arrow.  It is clear that the non-excluded (= permitted) mixed subgraphs must be the following:

\[
\xy
(-20,0)*{
\xy
(-5,10)*{}="1";
(0,0)*{}="2";
(5,5)*{}="3";
{\ar "1";"2"}; {\ar "2";"3"};
\endxy};
(0,0)*{
\xy
(-5,5)*{}="1";
(0,0)*{}="2";
(5,10)*{}="3";
{\ar "1";"2"}; {\ar "2";"3"};
\endxy};
(20,0)*{
\xy
(0,0)*{}="1";
(0,-5)*{\bullet}="2";
(0,-10)*{}="3";
{\ar "2";"1"}; {\ar "2";"3"};
\endxy};
\endxy
\]

As is readily seen, the effect of these excluded and non-excluded mixed subgraphs is to ensure that different Up trees (which use only up arrows) which are connected to each other by down arrows are in fact only connected to each other by down arrows between their minimal elements.

Finally, the last row of excluded subgraphs involve only downward pointing arrows, and consist precisely of the excluded subgraphs for Down graphs.  Thus a graph which excludes all the subgraphs listed in the proposition will be an Up-Down graph, and conversely.  Thus Theorem \ref{ThmKoszulBasis} follows from Proposition \ref{PropKoszulBasis}.

\begin{remark} It is not hard to see that the Up-Down graphs on $[n]$ consist exactly of the red-black graphs corresponding to the monomials referred to in Proposition 4.5 of \cite{BEER}.\footnote{The Down and Up graphs correspond respectively to red and black graphs in the terminology used in the definition of 2-step partition immediately prior to Proposition 4.5 of \cite{BEER}.}  These monomials are shown in that proposition to form a basis of a certain algebra $QA_n^0$ related to $\pvq$:  namely, after making a certain change of basis to $\pvq = U(\mathfrak{qtr_n})^!$, \cite{BEER} show that a certain filtration is defined on $\pvq$.  Then $QA_n^0$ is the quadratic approximation to the associated graded of $\pvq$ with respect to that filtration.  The given basis for $QA_n^0$ is then used to find the Hilbert series and to prove the Koszulness of $QA_n^0$, which in turn lead to the Hilbert series and Koszulness of $\pvq$.  It is interesting that the same collection of (Up-Down) graphs can be used to index a basis of $\pvq$ itself and show directly that it is Koszul, as we shall see next.
\end{remark}

\begin{proof}[Proof of the Proposition]

To begin with we linearly order the generators $\{\rij: 1\leq i \ne j \leq n\}$ of $\pvq$ using the numerical order of the indices, i.e. $\rij > \rkl \iff (i>k) \text{ or } (i=k \text{ and } j>l)$.\footnote{Strictly speaking $\pvq$ is generated by dual generators $\{\rij^*\}$.  As per footnote \ref{rijfootnote}, and to simplify the notation, we write $\rij$ instead of $\rij^*$.}  Then, given a wedge product of generators, we first order the generators in the product in increasing order, and then we linearly order such monomials first by length and then lexicographically (we also agree that 0 has length 0, so that $0<u$ for all non-zero $u$).  This ordering (which we refer to as the lexicographical ordering) is multiplicative in the sense that if $u,v,w$ are wedge products such that $u > v$ and $uw \ne 0$ then $uw >vw$.

We wish to define a set $\Eii$ of `illegal' degree 2 monomials, consisting of those degree 2 monomials which can be expressed as linear combinations of `smaller' monomials (with respect to the lexicographical ordering) using the defining relations of $\pvq$.  Unfortunately, the set $\Eii$ cannot be read off directly from the relations in the form (\ref{vrel}), (\ref{arel}) and (\ref{qAS}) as some of these have the same maximal terms.

However one readily finds that those relations can be put in the following equivalent form (where $1\leq i<j<k \leq n$):

\begin{align}
\rik\w\rjk &= \rij\w\rjk - \rji\w\rik \label{Gbasis1} \\
\rkj\w\rji &= \rji\w\rik - \rji\w\rjk - \rji\w\rki \notag \\
\rki\w\rkj &= \rki\w\rij - \rji\w\rik + \rji\w\rjk + \rji\w\rki  \notag \\
\rik\w\rkj &= \rij\w\rjk - \rij\w\rik  \notag \\
\rjk\w\rki &= \rji\w\rik - \rji\w\rjk  \notag \\
\rij\w\rkj &= \rij\w\rjk - \rij\w\rik - \rki\w\rij \label{Gbasis6}
\end{align}
as well as the relations (\ref{qAS}).  Each relation now has a distinct maximal term, and these have been collected on the LHS above.  Thus $\Eii$ consists of the union of the sets:

\begin{align*}
& \{ \rjk\w\rik,\ \rkj\w\rji,\ \rkj\w\rki: 1\leq i<j<k \leq n \} \\
& \{\rik\w\rkj,\ \rjk\w\rki,\ \rij\w\rkj,\  1\leq i<j<k \leq n \} \\
& \{\rij\w\rji,\ \rij\w\rij: 1\leq i \ne j \leq n\}
\end{align*}

These monomials are readily seen to correspond with the excluded diagrams of the Proposition.  The Proposition will be proved if we can show that the set $\overline{S}$ of monomials which do not contain any of the excluded 2-letter monomials $\Eii$ (even after re-ordering of the generators forming the monomial) comprise a basis for $\pvq$.

The proof of this fact is in the following two steps:
\begin{itemize}
\item show that the set $\overline{S}$ generates $\pvq$; and
\item show that $\overline{S}$ has the same number of elements in each degree as the basis for $\pvq$ given in Theorem \ref{QuadDualBasis} (which implies that the elements of $\overline{S}$ are linearly independent, and hence form a basis).
\end{itemize}

The fact that $\overline{S}$ generates $\pvq$ is easy, since if we have a monomial which contains (possibly after reordering its factors) an excluded 2-letter monomial, we can replace the monomial by a sum of terms in which the excluded 2-letter monomial is replaced by a smaller, legal 2-letter monomial.  It is clear that all of these terms are strictly smaller than the original monomial with respect to the lexicographical ordering, because of the multiplicative property of that ordering.  Hence, repeating if necessary, we must eventually reach a sum of terms none of which contains an excluded 2-letter submonomial, even after reordering of its factors - i.e. a sum of terms belonging to $\overline{S}$.

The fact that $\overline{S}$ has the same number of elements in each degree as the basis for $\pvq$ given in Theorem \ref{QuadDualBasis} is also straightforward.  Let us consider again the procedure described above for creating Up-Down graphs:
\begin{itemize}
\item First, take an unordered partition of $[n]$ into some number $l\leq n$ of cyclically ordered subsets (and form the unique Up graphs determined by the cyclically ordered subsets) - the number of ways of doing this is $s(n,l)$, where $s(-,-)$ denotes (unsigned) Stirling numbers of the first kind.  It is easy to see that the resulting Up forests have $(n-l)$ arrows, so that the resulting monomials have degree $(n-l)$.  We let $m_i$ denote the minimal element of cycle $C_i$ for $i-1,\dots,l$.
\item Second, take an unordered partition of $\mathcal{M}:=\{m_i:i=1,\dots,l\}$ as $\mathcal{M}=M_1  \sqcup \dots \sqcup M_k$, where the $M_i$ are unordered, and form the unique Down graph determined by this partition of $\mathcal{M}$.  The number of ways of doing this is $S(l,k)$, where $S(-,-)$ denotes (unsigned) Stirling numbers of the second kind.  It is easy to see that the resulting Down forests have $(l-k)$ arrows, so that the resulting monomials have degree $(l-k)$.
\end{itemize}

It is clear that the resulting Up-Down graph will have $(n-k)=(n-l)+(l-k)$ arrows, and hence will correspond to a degree $(n-k)$ monomial.

Thus if $\bar{S}^{n-k}$ denotes the monomials in $\bar{S}$ of degree $(n-k)$ we find:

\begin{equation*}
\text{dim } \bar{S}^{n-k} = \sum_{l=k}^n s(n,l) S(l,k) = L(n,k) = \text{dim } A^{!(n-k)}
\end{equation*}

For the last equality we used Corollary \ref{LahBasis}, and for the second-last equality we used the so-called Lah-Stirling identity:
\begin{equation*}
L=s S
\end{equation*}
where $L$, $s$ and $S$ are infinite-dimensional lower-triangular matrices whose $(n,k)$-th entries are, respectively, $L(n,k)$ (Lah numbers of Corollary \ref{LahBasis}), $s(n,k)$ and $S(n,k)$.  (See \cite{Riordan}, Problem 16(d) p. 44.)

This completes the proof.
\end{proof}

\begin{corollary}
\label{GrobnerBasis}
The algebra $\pvq$ (and hence also $\pv$) is Koszul.
\end{corollary}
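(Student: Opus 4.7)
The plan is to observe that the proof of Proposition \ref{PropKoszulBasis} already exhibits, implicitly, a quadratic Gr\"obner basis for the ideal of relations of $\pvq$ with respect to the lexicographical monomial order introduced in that proof. Explicitly, the rewriting rules (\ref{Gbasis1})--(\ref{Gbasis6}), together with the skew-commutativity relations (\ref{qAS}), have pairwise distinct leading terms, namely the monomials collected in the set $\Eii$. Proposition \ref{PropKoszulBasis} then establishes that the set $\overline{S}$ of normal monomials (those avoiding every element of $\Eii$ as a submonomial, even after reordering of factors) is a linear basis of $\pvq$. This is precisely the defining property of a Gr\"obner basis: the leading monomials of the given rewriting rules cut out a complement of a monomial basis of the quotient.

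With the quadratic Gr\"obner basis in hand, I would invoke the standard theorem, originally due to Priddy \cite{Priddy} and used in the exterior-algebra setting by Yuzvinsky \cite{Yuz} (see also \cite{ShelYuz}), that any quadratic algebra admitting a quadratic Gr\"obner basis of relations is Koszul. Applying this to $\pvq$ yields the Koszulness of $\pvq$. Finally, since $\pv$ and $\pvq$ are quadratic duals of one another, and Koszulness is a self-dual property of quadratic algebras (see e.g.\ \cite{Pol}), Koszulness of $\pvq$ automatically implies Koszulness of $\pv$.

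The only substantive point requiring care, and the main obstacle to be addressed, is confirming that the data assembled in \S\ref{ProofOfKoszulness}---namely the ordering on generators, the induced ordering on monomials of the free skew-commutative algebra, and the rewriting system (\ref{Gbasis1})--(\ref{Gbasis6}) plus (\ref{qAS})---truly satisfy the hypotheses of the Gr\"obner-implies-Koszul theorem as formulated in these references. Both the multiplicativity of the lexicographical order (noted in the proof of Proposition \ref{PropKoszulBasis}) and the uniqueness of leading terms have been verified; what remains is a sentence or two to match this setup to the precise statement of Priddy's PBW/Gr\"obner criterion, after which the corollary follows immediately.
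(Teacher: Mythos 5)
Your argument is exactly the paper's own proof: the fact that $\overline{S}$ (the monomials avoiding $\Eii$) is a basis shows that (\ref{Gbasis1})--(\ref{Gbasis6}) together with (\ref{qAS}) form a quadratic Gr\"obner basis for $\pvq$ as a skew-commutative algebra (the paper cites \cite{Mikha} for this framework), and Koszulness then follows from Yuzvinsky's theorem \cite{Yuz}, with Koszulness of $\pv$ obtained by quadratic duality. No gaps; the only difference is bookkeeping of references (the paper invokes \cite{Yuz}, Theorem 6.16, rather than Priddy's PBW criterion directly).
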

\begin{proof}
The fact that the monomials $\overline{S}$ not containing any of the 2-letter monomials $\Eii$ form a basis for $\pvq$ means that the equations (\ref{Gbasis1})-(\ref{Gbasis6}) and (\ref{qAS}) (whose leading terms are the $\Eii$) constitute a Gr\"obner basis for $\pvq$ (as a skew-commutative algebra - see \cite{Mikha}).  This Gr\"obner basis is quadratic, and hence by a result of \cite{Yuz} \footnote{Theorem 6.16.}, $\pvq$ is Koszul.
\end{proof}

\begin{remark} $\pvq$ as a `Product' of the families $\pfq$ and $\pbq$
\label{RemarkPVBasProduct}
\end{remark}

Given the correspondence between Down forests and $\pfq$, and between Up forests and $\pbq$, identified in Remarks \ref{Rkpfbbasis} and \ref{Rkpbbasis}, Theorem \ref{ThmKoszulBasis} suggests that the family of all $\pvq$ (parametrized by $n$) may be some kind of `product' of the families of the $\pfq$ and $\pbq$.  Indeed, one could express the Lah-Stirling identity above in the form:

\begin{equation*}
\text{dim } \mathfrak{pvb}_n^{!n-k} = L(n,k) = \sum_l s(n,l) S(l,k) = \sum_l \text{dim }  \mathfrak{pb}_n^{!(n-l)}\ \text{dim } \mathfrak{pfb}_l^{!(l-k)}
\end{equation*}

As pointed out in \cite{BEER}, $\pb$ may be viewed as a quotient of $\pv$ by $\pf$.  However, this does not explain why one might be able to view $\pvq$ as the kind of product of the families $\pbq$ and $\pfq$ suggested by the Lah-Stirling identity.

\section{Final Remarks}

\subsection{Other Groups}

One could seek to apply the PVH Criterion to determine whether other groups are quadratic.  One group that comes to mind is the pure cactus group $\Gamma$, as developed for instance in \cite{EHKR}.  As a preliminary step, it would be useful to find a presentation for $\Gamma$, and to show that the quadratic approximation to the associated graded of $\Q\Gamma$ with respect to the filtration by powers of the augmentation ideal (i.e. the universal enveloping algebra of the holonomy Lie algebra of $\Gamma$) is Koszul (at least up to homological degree 2).

There are many other groups to which one could seek to apply the PVH Criterion.  These include: Torelli groups of surfaces (see for instance \cite{HaMa}, questions 8.1 and 8.3); pure braid groups of surfaces (see for instance \cite{CEE}); pure braid groups of Coxeter groups (see for instance \cite{tDieck} and \cite{Cher}) and virtual virtual braid groups of Coxeter groups (see \cite{Thiel}).  It would also be interesting to define a notion of pure virtual braid groups of surfaces and determine whether they are quadratic.  In the case of some of the above groups, quadraticity has already been established by transcendental means, but obtaining a purely combinatorial proof through the use of the PVH Criterion would again be of interest. (I am indebted to G. Massuyeau, P. Etingof and E. Rains for suggesting many of these possible applications of the PVH Criterion.)

\subsection{Generalizing the PVH Criterion}

As mentioned in the Introduction, the PVH Criterion arguably lives naturally in a broader context than we have explored here, such as perhaps augmented algebras over an operad (or the related `circuit algebras' of \cite{DBN-WKO}).

In a different direction, one could try to generalize the criterion to deal with filtrations of an algebra by powers of an ideal other than an augmentation ideal.  A particular case of this deals with groups that exhibit a `fibering'.  For instance the virtual braid group $vB_n$ fits into an exact sequence:

\begin{equation*}
1 \rightarrow \PV \rightarrow vB_n \rightarrow S_n \rightarrow 1
\end{equation*}
where $S_n$ is the symmetric group.  (Similar sequences exist for the braid group and the cactus group.)  In such cases it is more interesting to consider the ideal corresponding to the kernel of the induced homomorphism $\Q vB_n \rightarrow \Q S_n$, rather than the augmentation ideal of $\Q vB_n$.  The extension of the PVH Criterion to cover these particular ideals should not be too difficult, but dealing with more general ideals could be interesting.

\end{document}